\newtheorem{thm}{Theorem}[section]
\newtheorem{lem}[thm]{Lemma}
\newtheorem{prop}[thm]{Proposition}
\newtheorem{cro}[thm]{Corollary}
\theoremstyle{definition}
\newtheorem{exa}[thm]{Example}
\newtheorem{asp}{Assumption}
\numberwithin{equation}{section}
\begin{document}
\title
[Superprocesses with stable branching]
{\large Limit theorems for a class of critical superprocesses with stable branching}
\author[Y.-X. Ren, R. Song and Z. Sun]{Yan-Xia Ren, Renming Song and Zhenyao Sun}
\address
{Yan-Xia Ren\\
	School of Mathematical Sciences\\
	Peking University\\
	Beijing, P. R. China, 100871}
\email{yxren@math.pku.edu.cn}
\thanks{The research of Yan-Xia Ren is supported in part by NSFC (Grant Nos. 11671017  and 11731009), and LMEQF.
}
\address
{Renming Song\\
	Dept of Mathematics\\
	University of Illinois at Urbana-Champaign\\
	Urbana, IL, USA, 61801}
\email{rsong@illinois.edu}
\thanks{The research of Renming Song is supported in part by the Simons Foundation (\#429343, Renming Song).}
\address
{Zhenyao Sun\\
School of Mathematics and Statistics\\
Wuhan University\\
Wuhan, Hubei, P. R. China, 430072}
\email{zhenyao.sun@pku.edu.cn}
\thanks{Zhenyao Sun is the corresponding author.}
\begin{abstract}
  We consider a critical superprocess $\{X;\mathbf P_\mu\}$ with general spatial motion and spatially dependent stable branching mechanism with lowest stable index $\gamma_0 > 1$. We first show that, under some conditions, $\mathbf P_{\mu}(\|X_t\|\neq 0)$ converges to $0$ as $t\to \infty$ and is regularly varying with index $(\gamma_0-1)^{-1}$. Then we show that, for a large class of non-negative testing functions $f$, the distribution of $\{X_t(f);\mathbf P_\mu(\cdot|\|X_t\|\neq 0)\}$, after appropriate rescaling, converges weakly to a positive random variable $\mathbf z^{(\gamma_0-1)}$ with Laplace transform $E[e^{-u\mathbf z^{(\gamma_0-1)}}]=1-(1+u^{-(\gamma_0-1)})^{-1/(\gamma_0-1)}.$
\end{abstract}
\subjclass[2010]{Primary: 60J68; 60F05. Secondary 60J80; 60J25}
\keywords{Critical superprocess; stable branching, scaling limit; intrinsic ultracontractivity; regular variation}
\maketitle
\section{Introduction}
\subsection{Background}
The study of the asymptotic behaviors of critical branching particle systems has a long history.
It is well known that for a critical Galton-Watson process $\{(Z_n)_{n\geq 0}; P\}$, we have
\begin{align}
  \label{eq: Kolmogorov's result with finite variance}
	n P(Z_n > 0)
	\xrightarrow[n\to \infty]{} \frac{2}{\sigma^2}
\end{align}
and
\begin{align}
\label{eq: Yaglom's result with finite variance}
	\Big\{ \frac{Z_n}{n}; P(\cdot| Z_n > 0) \Big\}
	\xrightarrow[n \to \infty]{\operatorname{law}} \frac{\sigma^2}{2} \mathbf e,
\end{align}
where $\sigma^2$ is the variance of the offspring distribution and $\mathbf e$ is an exponential random variable with mean $1$.
The result \eqref{eq: Kolmogorov's result with finite variance} is due to Kolmogorov \cite{Kolmogorov1938Zur-losung}, and the result \eqref{eq: Yaglom's result with finite variance} is due to Yaglom \cite{Yaglom1947Certain}.
For further references to these results, see \cite{Harris2002The-theory, KestenNeySpitzer1966The-Galton-Watson}.
Since then, lots of analogous results have been obtained for more general critical branching processes with finite 2nd moment, see \cite{AsmussenHering1983Branching, AthreyaNey1974Functionals, AthreyaNey1972Branching, JoffeSpitzer1967On-multitype} for example.

Notice that \eqref{eq: Kolmogorov's result with finite variance} and \eqref{eq: Yaglom's result with finite variance} are still valid when  $\sigma^2 = \infty$,
see \cite{KestenNeySpitzer1966The-Galton-Watson} for example.
In this case, the limits in \eqref{eq: Kolmogorov's result with finite variance} and \eqref{eq: Yaglom's result with finite variance} are degenerate, and thus more appropriate scalings are needed.
Research in this direction was first conducted by Zolotarev \cite{Zolotarev1957More} in a simplified continuous time set-up, which is then extended by Slack \cite{Slack1968A-branching} to discrete time critical Galton-Watson processes allowing infinite variance.
The main result of  \cite{Slack1968A-branching} can be stated as follows. 
Consider a critical Galton-Watson process $\{(Z_n)_{n\geq 0}; P\}$.
Assume that the  generating function $f(s)$ of the offspring distribution is of the form
\begin{align}
  \label{eq: offspring generating function with alpha moment}
	f(s)
	= s + (1-s)^{1+ \alpha} l(1-s),
	\quad s\geq 0,
\end{align}
where $\alpha\in (0, 1]$ and $l$ is a function slowly varying at $0$.
Then
\begin{align} 
  \label{eq: extinction probability of critical GW process without 2rd moment}
	P(Z_n > 0)
	= n^{-1/\alpha} L(n),
\end{align}
where $L$ is a function slowly varying at $\infty$, and
\begin{align} 
  \label{eq: conditional distribution of critical GW process without 2rd moment}
	\big\{ P(Z_n > 0) Z_n; P(\cdot | Z_n > 0)\big\}
	\xrightarrow[n\to \infty]{\operatorname{law}} \mathbf z^{(\alpha)},
\end{align}
where $\mathbf z^{(\alpha)}$ is a positive random variable with Laplace transform
\begin{align}
  \label{e:zloltarev}
	E[e^{- u \mathbf z^{(\alpha)}}]
	= 1 - (1+ u^{-\alpha})^{-1/\alpha},
	\quad u \geq 0.
\end{align}
In \cite{Slack1972Further}, Slack also considered the converse of this problem:
In order for $\big\{ P(Z_n > 0) Z_n; P(\cdot | Z_n > 0)\big\}$ to have a non-degenerate weak limit, the generating function of the offspring distribution must be of the form of \eqref{eq: offspring generating function with alpha moment} for some $0 < \alpha \leq 1$.
For shorter and more unified approaches to these results, we refer our readers to \cite{Borovkov1989Method, Pakes2010Critical}.

Goldstein and Hoppe \cite{GoldsteinHoppe1978Critical} considered the asymptotic behavior of multitype critical Galton-Watson processes without the 2nd moment condition. 
Their main result can be stated as follows.
Let $\mathbf Z_n=(Z_n^{(1)}, \dots, Z_n^{(d)})$ be a $d$-type, nonsingular Galton-Watson process with its mean matrix $M:= (E[Z_1^{(j)}| Z_0^{(i)} = 1, Z_0^{(k)} = 0, \forall k \neq i])_{1\leq i,j\leq d}$ being positive regular, that is, all entries of $M$ are finite and there exists a number $n \geq 1$ such that all entries of $M^n$ are positive.
Denote by $\mathbf F(\mathbf s) = (\mathbf F_1(\mathbf s), \dots, \mathbf F_d(\mathbf s))$ the generating function of the offspring distribution, and by $\mathbf F^{(n)}(\mathbf s), ~ n>1,$ its $n$th iterates.
Assume that the process is critical in the sense that the maximal eigenvalue of $M$ is $1$.
Let $\mathbf v$ and $\mathbf u$ be the left and right  eigenvectors of $M$, respectively, corresponding to this maximal eigenvalue $1$, and normalized so that $\mathbf v \cdot \mathbf u = 1$ and $\mathbf 1 \cdot \mathbf u = 1$, where $\mathbf 1$ is the vector $(1,\dots, 1)$.
Suppose that
\begin{align}
  \label{eq: regularly varying condition for multitype branching process}
	\mathbf v G(\mathbf 1-x\mathbf u) \mathbf u
	= x^\alpha l(x),
	\quad x > 0,
\end{align}
where $0 < \alpha \leq 1$;
$l$ is slowly varying at $0$;
and the matrix $G(\mathbf s)$ is defined by
\begin{align}
	\mathbf 1 - \mathbf F(\mathbf s)
	= (M - G(\mathbf s))(\mathbf 1 - \mathbf s),
	\quad \mathbf s \in \mathbb R_+^d.
\end{align}
Let $a_n := \mathbf v \cdot (\mathbf 1 - \mathbf F^{(n)}(\mathbf 0))$, with $\mathbf 0 \in \mathbb R_+^d$ being the vector $(0,\dots, 0)$.
It was shown in \cite{GoldsteinHoppe1978Critical} that, for each $\mathbf i \in \mathbb N_0^d \setminus \{\mathbf 0\}$,
\begin{align}
  \label{eq: limit behavior of the exitinction probability without finite variance of multitype branching processes}
  n l(a_n)
	\operatorname{P}(\mathbf Z_n \neq \mathbf 0| Z_0 = \mathbf i)^\alpha
	\xrightarrow[n\to \infty]{}
	\frac{(\mathbf i \cdot \mathbf u)^\alpha}\alpha,
\end{align}
and for each $\mathbf j \in \mathbb N_0^d$,
\begin{align}
  \label{eq: conditioned normalized multitype branching process}
	\{ a_n \mathbf Z_n \cdot \mathbf j ; P(\cdot | \mathbf Z_n \neq \mathbf 0, \mathbf Z_0 = \mathbf i)\}
	\xrightarrow[n\to \infty]{\operatorname{law}} (\mathbf v\cdot \mathbf j) \mathbf z^{(\alpha)},
\end{align}
where $\mathbf z^{(\alpha)}$ is a random variable with Laplace transform given by \eqref{e:zloltarev}.
For the converse of this problem, Vatutin \cite{Vatutin1977Limit} showed that in order for the left side of \eqref{eq: conditioned normalized multitype branching process} to have a non-degenerate weak limit, one must have \eqref{eq: regularly varying condition for multitype branching process} for some $0 < \alpha \leq 1$.
Vatutin \cite{Vatutin1977Limit} also considered analogous results for continuous time multitype critical Galton-Watson processes.
	
Asmussen and Hering \cite[Sections~6.3~and~6.4]{AsmussenHering1983Branching} discussed similar questions for critical branching Markov processes $(Y_t)$ in a general space $E$ under some ergodicity condition (the so-called condition (M), see \cite[p.~156]{AsmussenHering1983Branching}) on the mean semigroup of $(Y_t)$.
When the second moment is infinite, under a condition  parallel  to \eqref{eq: regularly varying condition for multitype branching process} (the so-called condition (S) \cite[p.~207]{AsmussenHering1983Branching}), results parallel to \eqref{eq: limit behavior of the exitinction probability without finite variance of multitype branching processes} and \eqref{eq: conditioned normalized multitype branching process} were proved in \cite[Theorem~6.4.2]{AsmussenHering1983Branching} for critical branching Markov processes.

In this paper, we are interested in a class of measure-valued branching Markov processes known as $(\xi, \psi)$-superprocesses:
$\xi$, the spatial motion of the superprocess, is a Hunt process on a locally compact separable metric space $E$;
$\psi$, the branching mechanism of the superprocess, is a function on $E \times [0,\infty)$ of the form
\begin{align} 
  \label{eq: branching mechanism}
	\psi(x,z):=
	- \beta(x) z + \sigma (x)^2 z^2 + \int_{(0,\infty)} (e^{-zy} - 1 + zy) \pi(x,dy),
	\quad x\in E, z\geq 0,
\end{align}
where $\beta, \sigma \in \mathscr B_b(E)$ and $\pi(x,dy)$ is a kernel from $E$ to $(0,\infty)$ such that $\sup_{x\in E} \int_{(0,\infty)} (y\wedge y^2) \pi(x,dy) < \infty$.
For the precise definition and properties of superprocesses, see \cite{Li2011Measure-valued}.

Results parallel to \eqref{eq: Kolmogorov's result with finite variance} and \eqref{eq: Yaglom's result with finite variance} have been obtained for some critical superprocesses by Evans and Perkins \cite{EvansPerkins1990Measure-valued} and Ren, Song and Zhang \cite{RenSongZhang2015Limit}.
Evans and Perkins \cite{EvansPerkins1990Measure-valued} considered critical superprocesses with branching mechanism of the form $(x,z)\mapsto z^2$ and with the spatial motion satisfying some ergodicity conditions.
Ren, Song and Zhang \cite{RenSongZhang2015Limit} extended the results of \cite{EvansPerkins1990Measure-valued} to a class of critical superprocesses with general branching mechanism and general spatial motions.
The main results of \cite{RenSongZhang2015Limit} are as follows.
Let $\{(X_t)_{t\geq 0}; \mathbf P_\mu \}$ be a critical superprocess starting from a finite measure $\mu$ on $E$.
Suppose the spatial motion $\xi$ is intrinsically ultracontractive with respect to some reference measure $m$, and the branching mechanism $\psi$ satisfies the following second moment condition
\begin{align}
  \label{eq: second moment condition}
	\sup_{x\in E} \int_{(0,\infty)} y^2 \pi(x,dy)
	< \infty.
\end{align}
For any finite measure $\mu$ on $E$ and any measurable function $f$ on $E$, we use $\langle f,\mu\rangle$ to denote the integral of $f$ with respect to $\mu$.
Put $\|\mu\|=\langle 1, \mu\rangle$.
Under some other mild assumptions, it was proved in  \cite{RenSongZhang2015Limit} that
\begin{align}
  \label{eq: Kolmogorov type result with 2rd moment}
	t \mathbf P_\mu(\|X_t\| \neq 0)
	\xrightarrow[t\to \infty]{} c^{-1}
  \langle \phi, \mu \rangle,
\end{align}
and for a large class of testing functions $f$ on $E$,
\begin{align}\label{eq: Ygalom type result with 2rd moment}
	\{ t^{-1}X_t(f); \mathbf P_\mu (\cdot |\|X_t\| \neq 0)\}
	\xrightarrow[t\to \infty]{\operatorname{law}} c \langle \phi^*, f\rangle_m \mathbf e.
\end{align}
Here, the constant $c > 0$ is independent of the choice of $\mu$ and $f$;
$\langle\cdot, \cdot \rangle_m$ denotes the inner product in $L^2(E, m)$;
$\mathbf e$ is an exponential random variable with mean $1$;
and $\phi$ (respectively, $\phi^*$) is the principal eigenfunction of (respectively, the dual of) the generator of the mean semigroup of $X$.
In \cite{RenSongSun2017Spine}, we provided an alternative probabilistic approach to \eqref{eq: Kolmogorov type result with 2rd moment} and \eqref{eq: Ygalom type result with 2rd moment}.

It is natural to ask whether results parallel to \eqref{eq: extinction probability of critical GW process without 2rd moment} and \eqref{eq: conditional distribution of critical GW process without 2rd moment} are still valid for some critical superprocesses without the second moment condition \eqref{eq: second moment condition}.
A simpler version of this question has already been answered in the context of continuous-state branching processes	(CSBPs) which can be viewed as superprocesses without spatial movements.
Kyprianou and Pardo \cite{Kyprianou2008Continuous} considered CSBPs $\{(Y_t)_{t\geq 0}; P\}$ with stable branching mechanism $\psi(z) =c z^\gamma$, where $c > 0$ and $\gamma \in (1,2]$.
They showed that for all $x\geq 0$, with $c_t := (c(\gamma - 1)t)^{1/(\gamma - 1)}$,
\begin{align} 
  \label{eq: conditional limit of CSBP with stable branching}
	\{c_t^{-1}Y_t; P( \cdot |Y_t > 0,Y_0 = x)\}
	\xrightarrow[t\to \infty]{\operatorname{law}} \mathbf z^{(\gamma - 1)},
\end{align}
where $\mathbf z^{(\gamma - 1)}$ is a random variable with Laplace transform given by \eqref{e:zloltarev} (with $\alpha=\gamma-1$) and is independent of the initial position $x$.
Recently, Ren, Yang and Zhao \cite{RenYangZhao2014Conditional} studied CSBPs $\{(Y_t)_{t\ge 0}; P\}$ with branching mechanism
\begin{align}
  \label{eq: regular varing of branching mechanism of a CSBP}
	\psi(z)
	= c z^\gamma l(z),
	\quad z\geq 0,
\end{align}
where $c > 0$, $\gamma \in (1,2]$ and $l$ is a function slowly varying  at $0$.
It was proved in  \cite{RenYangZhao2014Conditional} that for all $x\geq 0$, with $\lambda_t: = P_1(Y_t > 0)$,
\begin{align}
  \label{eq: conditional limit of CSBP}
	\{ \lambda_t Y_t ; P(\cdot | Y_t > 0, Y_0 = x)\}
  \xrightarrow[t\to \infty]{\operatorname{law}} \mathbf z^{(\gamma - 1)}
\end{align}
where the distribution of the random variable $\mathbf z^{(\gamma - 1)}$ is given by \eqref{e:zloltarev} (with $\alpha=\gamma-1$) and is independent of the initial position $x$.

Later, Iyer, Leger and Pego \cite{IyerLegerPego2015Limit} considered the converse problem:
Suppose  $\{(Y_t)_{t\geq 0}; P\}$ is a CSBP with critical branching mechanism $\psi$ satisfying Grey's condition.
In order for the left side of \eqref{eq: conditional limit of CSBP} to have a non-trivial weak limit for some positive constants $(\lambda_t)_{t\geq 0}$, one must have \eqref{eq: regular varing of branching mechanism of a CSBP} for some $1< \gamma \leq 2$.

In this paper, we will establish a result parallel to \eqref{eq: conditional limit of CSBP with stable branching} for some critical $(\xi,\psi)$-superprocesses $\{X; \mathbf P\}$ with spatially dependent stable branching mechanism.
In particular, we assume that the spatial motion $\xi$ is intrinsically ultracontractive with respect to some reference measure $m$, and the branching mechanism takes the form
\begin{align}
	\psi(x,z)
  &= - \beta(x) z + \kappa(x) \int_0^\infty (e^{-z y} - 1+ z y) \frac{dy}{\Gamma(- \gamma(x)) y^{1+ \gamma(x)}}
  \\&=  -\beta (x) z + \kappa(x) z^{\gamma(x)},
	\quad x\in E, z \geq 0,
\end{align}
where $\beta \in \mathscr B_b(E)$, $\gamma \in \mathscr B^+_b(E)$, $\kappa \in \mathscr B^+_b(E)$ with $1< \gamma(\cdot )<2$, $\gamma_0 := \operatorname{ess\,inf}_{m(dx)} \gamma(x)> 1$ and $\operatorname{ess\,inf}_{m(dx)}\kappa(x) > 0$.
Let $\mu$ be an arbitrary finite initial measure on $E$.
We will show that $\mathbf P_{\mu}( \| X_t\| \neq 0)$ converges to $0$ as $t\to \infty$ and is regularly varying at infinity with index $\frac{1}{\gamma_0 - 1}$.
Furthermore, if $m(x: \gamma(x) = \gamma_0)>0$, we will show that
\begin{align}
	\lim_{t\to \infty}\eta^{-1}_t \mathbf P_{\mu}( \|X_t\| \neq0)
	= \mu(\phi),
\end{align}
and for a large class of non-negative testing functions $f$,
\begin{align}
  \label{eq: result2}
	\{ \eta_t X_t(f) ; \mathbf P_{\mu}(\cdot | \|X_t\|\neq 0) \}
	\xrightarrow[t\to \infty]{\operatorname{law}}
	\langle f, \phi^*\rangle_m \mathbf z^{(\gamma_0 - 1)},
\end{align}
where $\eta_t := \big( C_X(\gamma_0 - 1) t \big)^{- \frac {1} {\gamma_0 - 1} },$ $C_X := \langle \mathbf 1_{\gamma(\cdot) = \gamma_0} \kappa\cdot \phi^{\gamma_0}, \phi^* \rangle_m$ and $\mathbf z^{(\gamma_0 - 1)}$ is a random variable with Laplace transform given by \eqref{e:zloltarev} (with $\alpha=\gamma_0-1$).
Notice that the distribution of the weak limit $\langle f, \phi^*\rangle_m \mathbf z^{(\gamma_0 - 1)}$ does not depend on $\mu$.
Precise statements of the assumptions and the results are presented in the next subsection.
It is interesting to mention here that, even though the stable index $\gamma(x)$ is spatially dependent, the limiting behavior of the critical superprocess $\{X; \mathbf P\}$ depends primarily on the lowest index $\gamma_0$.

\subsection{Model and results}

We first fix our notation.
Unless stated explicitly otherwise, $E$ is assumed to be a locally compact separable metric space. 
We use $\mathscr B(E)$ to denote the collection of all  Borel subsets of $E$ and also the collection of all  Borel functions on $E$. Define $\mathscr B_b(E) :=\{f \in \mathscr B(E): \sup_{x\in E}|f(x)|<\infty \}$, $\mathscr B^+(E) :=\{f\in \mathscr B(E): \forall x\in E,~f(x)\geq 0\}$ and $\mathscr B^{++}(E) :=\{f\in \mathscr B(E): \forall x\in E,~f(x)> 0\}$.
Define $\mathscr B^+_b(E) := \mathscr B_b(E) \cap \mathscr B^+(E)$ and $\mathscr B^{++}_b(E):= \mathscr B_b(E) \cap \mathscr B^{++}(E)$.
Denote by $\mathcal M_E$ the collection of all Borel measures on $E$.
Denote by $\mathcal M^\sigma_E$ the collection of all  $\sigma$-finite Borel measures on $E$.
For simplicity, we write $\mu(f)$ and sometimes $\langle \mu, f\rangle$ for the integration of a function $f$ with respect to a measure $\mu$.
For any $f \in \mathscr B^+(E)$, define $\mathcal M^f_E:= \{\mu \in \mathcal M_E: \mu(f) < \infty\}$. 
In particular, $\mathcal M^1_E$ is the collection of all  finite Borel measures on $E$.

We now give the definition of a $(\xi, \psi)$-superprocess:
Let the spatial motion $\xi=\{(\xi_t)_{t\geq 0};(\Pi_x)_{x\in E}\}$ be an $E$-valued Hunt process with its lifetime denoted by $\zeta$, and the branching mechanism $\psi$ be a function on $E\times[0,\infty)$ given by \eqref{eq: branching mechanism}.
We say an $\mathcal M^1_E$-valued Hunt process $X=\{(X_t)_{t\geq 0}; (\mathbf P_\mu)_{\mu \in \mathcal M^1_E}\}$ is a \emph{$(\xi,\psi)$-superprocess} if for each $t\geq 0, \mu \in \mathcal M_E^1$ and  $f\in \mathscr B^+_b(E)$, we have
\begin{align}
	\mathbf P_\mu [e^{-X_t(f)}] = e^{-\mu(V_tf)},
\end{align}
where the function $(t,x) \mapsto V_tf(x)$ on $[0,\infty) \times E$ is the unique locally bounded positive solution to the equation
\begin{align}
  \label{eq:FKPP_in_definition}
	V_t f(x) + \Pi_x \Big[  \int_0^{t\wedge \zeta} \psi (\xi_s,V_{t-s} f) ds \Big]
	= \Pi_x [ f(\xi_t)\mathbf 1_{t<\zeta} ],
	\quad t \geq 0, x \in E.
\end{align}
(In this paper, for any real-valued function $F$ on $E\times [0,\infty)$ and real-valued function $f$ on $E$, we write $F(x,f):= F(x,f(x))$ for simplicity.)

Recall that the branching mechanism $\psi$ is given by \eqref{eq: branching mechanism} and its linear coefficient $\beta$ is a bounded Borel function on $E$. Define the \emph{Feynman-Kac semigroup}
\begin{align}
  P^\beta_tf(x)
  := \Pi_x \big[e^{\int_0^{t} \beta(\xi_r)dr} f(\xi_t)\mathbf 1_{t<\zeta}\big],
  \quad t\geq 0, x\in E, f\in \mathscr B_b(E).
\end{align}
It is known, see \cite[Proposition 2.27]{Li2011Measure-valued} for example, $(P^\beta_t)$ is the \emph{mean semigroup} of the superprocess $\{X; \mathbf P\}$ in the sense that
\begin{align}
  \label{eq:mean_formula}
	\mathbf P_\mu [X_t(f)]
	= \mu(P^\beta_t f),
	\quad \mu \in \mathcal M^1_E,
	t \geq 0,f \in \mathscr B_b(E).
\end{align}
The mean semigroup plays a central role in the study of the asymptotic behavior of superprocesses.
As discussed in \cite{EvansPerkins1990Measure-valued}, in order to have a result like \eqref{eq: Ygalom type result with 2rd moment} or \eqref{eq: result2}, we have to establish the asymptotic behavior of the mean semigroup first.
This can be done under the following assumptions on the spatial motion $\xi$:
\begin{asp} 
  \label{asp: 1}
  There exist an $m \in \mathcal M_E^\sigma$ with full support on the state space $E$ and a family of strictly positive,
	bounded continuous functions $\{ p_t(\cdot,\cdot): t > 0 \}$ on $E \times E$ such that
  \begin{align}
    \Pi_x[ f(\xi_t)\mathbf 1_{t < \zeta} ]
    = \int_E p_t(x,y) f(y) m(dy),
    &\quad t>0, x \in E,f \in \mathscr B_b(E);
    \\\int_E p_t(y,x)m(dy)
    \leq 1,	
    &\quad t>0,x\in E;
    \\\int_E \int_E p_t(x,y)^2 m(dx) m(dy)
    <\infty,
    &\quad t> 0;
  \end{align}
	and the functions $x \mapsto \int_E p_t(x,y)^2 m(dy)$ and $x \mapsto \int_E p_t(y,x)^2 m(dy)$ are both continuous.
\end{asp}

We will write $\langle f, g\rangle_m$ for $\int_E fg dm$ to emphasize that it is the inner product in the Hilbert space $L^2(E, m)$.
Let $(P_t^*)_{t\geq 0}$ be the dual of the transition semigroup $(P_t)_{t\geq 0}$ in $L^2(E, m)$, i.e.,
\begin{align}
  P_0^* = I;
  \quad P_t^* f(x) := \int_E p_t(y,x) f(y) m(dy),
  \quad t > 0, x\in E, f\in \mathscr B_b(E).
\end{align}
Under Assumption \ref{asp: 1}, it is proved in \cite{RenSongZhang2015Limit} and \cite{RenSongZhang2017Central} that $(P_t)_{t \geq 0}$ and $(P^*_t)_{t \geq 0}$ are both strongly continuous semigroups of compact operators on $L^2(E,m)$.
Let $\widetilde L$ and $\widetilde L^*$ be the generators of the semigroups $(P_t)_{t \geq 0}$ and $(P^*_t)_{t \geq 0}$, respectively.
Denote by $\sigma(\widetilde L)$ and $\sigma(\widetilde L^*)$ the spectra of $\widetilde L$ and $\widetilde L^*$, respectively.
According to \cite[Theorem V.6.6]{Schaefer1974Banach}, $\widetilde \lambda := \sup \text{Re}(\sigma(\widetilde L)) = \sup \text{Re}(\sigma(\widetilde L^*))$ is a common eigenvalue of multiplicity $1$ for both $\widetilde L$ and $\widetilde L^*$.
By the argument in \cite{RenSongZhang2015Limit}, the eigenfunctions $\widetilde \phi$ of $\widetilde L$ and $\widetilde \phi^*$ of $\widetilde L^*$ associated with the eigenvalue $\widetilde \lambda$ can be chosen to be strictly positive and continuous everywhere on $E$.
We further normalize $\widetilde \phi$ and $\widetilde \phi^*$ by $\big\langle \widetilde \phi, \widetilde \phi \big\rangle_m = \big\langle \widetilde \phi, \widetilde \phi^*\big\rangle_m = 1$ so that they are unique.

It is also proved in \cite{RenSongZhang2015Limit, RenSongZhang2017Central} that there exists a function $p^\beta_t(x,y)$ on $(0,\infty) \times E \times E$ which is continuous in $(x,y)$ for each $t>0$ such that
\begin{align}
	e^{-\|\beta\|_\infty t} p_t(x,y)
	\leq p^{\beta}_t(x,y)
	\leq e^{\|\beta\|_\infty t} p_t(x,y),
	\quad t>0, x, y\in E,
\end{align}
and that for any $t>0, x\in E$ and $f \in \mathscr B_b(E)$,
\begin{align}
	P^\beta_t f(x)
	= \int_E p_t^\beta (x,y) f(y) m(dy).
\end{align}
$(p^\beta_t)_{t\geq 0}$ is called the \emph{density} of the semigroup $(P^\beta_t)_{t\geq 0}$.
Define the dual semigroup $(P^{\beta *}_t)_{t \geq 0}$ by
\begin{align}
	P^{\beta *}_0 = I;
	\quad P^{\beta *}_t f(x)
	:= \int_E p^\beta_t (y,x) f(y) m(dy),
	\quad t>0, x\in E, f\in \mathscr B_b(E).
\end{align}
It is proved in \cite{RenSongZhang2015Limit, RenSongZhang2017Central} that $(P^\beta_t)_{t \geq 0}$ and $(P^{\beta *}_t)_{t \geq 0}$ are both strongly continuous semigroups of compact operators on $L^2(E,m)$.
Let $L$ and $L^*$ be the generators of the semigroups $(P^\beta_t)_{t \geq 0}$ and $(P^{\beta *}_t)_{t \geq 0}$, respectively.
Denote by $\sigma(L)$ and $\sigma(L^*)$ the spectra of $L$ and $L^*$, respectively.
According to \cite[Theorem V.6.6]{Schaefer1974Banach}, $\lambda := \sup \text{Re}(\sigma(L)) = \sup \text{Re}(\sigma(L^*))$ is a common eigenvalue of multiplicity $1$ for both $L$ and $L^*$.
By the argument in \cite{RenSongZhang2015Limit}, the eigenfunctions $\phi$ of $L$ and $\phi^*$ of $L^*$ associated with the eigenvalue $\lambda$ can be chosen to be strictly positive and continuous everywhere on $E$.
We further normalize $\phi$ and $\phi^*$ by $\langle\phi, \phi\rangle_m = \langle\phi,\phi^*\rangle_m = 1$ so that they are unique.
Moreover, for each $t\geq 0$ and $x\in E$, we have $P^\beta_t \phi(x) = e^{\lambda t} \phi(x)$ and $P^{\beta *}_t \phi^*(x) = e^{\lambda t} \phi^*(x)$.
We refer to $\phi$ (resp. $\phi^*$) and $\lambda$ as the \emph{principal eigenfunction} and the \emph{principal eigenvalue} of $L$ (resp. $L^*$).

Now, from
\begin{align}
	\mathbf P_\mu[X_t(\phi)]
	= e^{\lambda t} \mu(\phi),
	\quad t \geq 0,
\end{align}
we see that, if $\lambda > 0$, the mean of $X_t(\phi)$ will increase exponentially; if $\lambda < 0$, the mean of $X_t(\phi)$ will decrease exponentially; and if $\lambda = 0$, the mean of $X_t(\phi)$ will be a constant.
Therefore, we say $X$ is \emph{supercritical, critical or subcritical}, according to $\lambda > 0$, $\lambda = 0$ or $\lambda < 0$, respectively.
Since we are only interested in the critical case, we assume the following:
\begin{asp} 
  \label{asp: 2}
  The superprocess $X$ is critical, i.e., $\lambda = 0$.
\end{asp}

Our next assumption is on the spatial motion $\xi$:

\begin{asp} 
  \label{asp: 3}
	$\widetilde \phi$ is bounded, and $(P_t)_{t\geq 0}$ is \emph{intrinsically ultracontractive}, that is, for each $t>0$, there is a constant $c_t >0$ such that for each $x,y\in E$, $p_t(x,y) \leq c_t \widetilde \phi(x) \widetilde \phi^*(y)$.
\end{asp}
	
Under Assumption \ref{asp: 3}, it is proved in \cite{RenSongZhang2015Limit, RenSongZhang2017Central} that the principal eigenfunction $\phi$ of the Feynman-Kac semigroup $(P^\beta_t)_{t\geq 0}$ is also bounded.
Moreover, $(P^\beta_t)_{t\geq 0}$ is also \emph{intrinsically ultracontractive}, in the sense that for each $t>0$, there is a constant $c_t >0$ such that for each $x,y\in E$, $p^\beta_t(x,y) \leq c_t \phi(x) \phi^*(y)$.
In fact, it is proved in \cite{KimSong2008Intrinsic} that for each $t>0$, $(p^\beta_t(x,y))_{x,y\in E}$ is comparable to $(\phi(x)\phi^*(y))_{x,y\in E}$ in the sense that there is a constant $c_t > 1$ such that
\begin{align}
  \label{eq: p-t-beta is comparable to phi phi-star}
	c_t^{-1}
	\leq \frac {p^\beta_t(x,y)} {\phi(x)\phi^*(y)}
	\leq c_t,
	\quad x,y \in E.
\end{align}
It is also shown in \cite{KimSong2008Intrinsic} that there are constants $c_0, c_1 > 0$ such that
\begin{align}
  \label{eq:q(t,x,y)}
	\sup_{x,y\in E} \Big|\frac{p^\beta_t(x,y)}{\phi(x)\phi^*(y)} - 1 \Big|
	\leq c_0 e^{-c_1 t},
	\quad t > 1.
\end{align}
Assumption \ref{asp: 3} is a pretty strong assumption on the semigroup $\{P_t : t \ge 0\}$.
For example, it rules out the semigroup of Brownian motion on $\mathbb R^d$ and the semgroup of Ornstein-Uhlenbeck process on $\mathbb R^d$.
However, this assumption is satisfied in a lot of cases.  
In \cite{RenSongZhang2015Limit},  a list of examples of processes satisfying 
Assumptions \ref{asp: 1} and \ref{asp: 3} were given. 
For the convenience of our readers, we will briefly recall some of these examples in Subsection \ref{sec:examples}.

Recall that the branching mechanism is given by \eqref{eq: branching mechanism}. 
We assume the following:
\begin{asp} 
  \label{asp: 4}
	The branching mechanism $\psi$ is of the form:
  \begin{align}
    \psi(x,z)
    &= - \beta(x) z + \kappa(x) \int_0^\infty (e^{-z y} - 1+ z y) \frac{dy}{\Gamma(- \gamma(x)) y^{1+ \gamma(x)}}
    \\&= -\beta (x) z + \kappa(x) z^{\gamma(x)},
    \quad x\in E, z \geq 0,
  \end{align}
  where $\gamma \in \mathscr B^+_b(E)$, $\kappa \in \mathscr B^{++}_b(E)$ with $1< \gamma(\cdot )<2$, $\gamma_0 := \operatorname{ess\,inf}_{m(dx)} \gamma(x)> 1$ and $\kappa_0:=\operatorname{ess\,inf}_{m(dx)}\kappa(x) > 0$.
\end{asp}
Here we used the definition of the Gamma function on the negative half line:
\begin{align}\label{eq: definition of Gamma function}
  \Gamma(x)
  := \int_0^\infty t^{x-1} \Big(e^{-t} - \sum_{k=0}^{n-1} \frac{(-t)^k}{k!}\Big) dt,
  \quad -n< x< -n+1, n\in \mathbb N.
\end{align}
	
We now present the main result of this paper:
\begin{thm}
  \label{thm: main theorem}
  Suppose that $\{(X_t)_{t\geq 0}; (\mathbf P_\mu)_{\mu \in \mathcal M_E^1}\}$ is a $(\xi, \psi)$-superprocess satisfying Assumptions \ref{asp: 1}--\ref{asp: 4}.
  Then,
  \begin{itemize}
  \item[(1)]
    $\{X; \mathbf P\}$ is non-persistent, that is, for each $t > 0$ and $x\in E$,
    $\mathbf P_{\delta_x}( \| X_t\| = 0) > 0$.
  \item[(2)]
    For each $\mu \in \mathcal M^1_E$, $\mathbf P_{\mu}(\|X_t\| \neq 0)$ converges to $0$ as $t \to \infty$ and is regularly varying at infinity with index $-(\gamma_0-1)^{-1}$.
    Furthermore, if $m(x: \gamma (x)= \gamma_0)>0$, then
    \begin{align}
      \lim_{t\to\infty} \eta_t^{-1}\mathbf P_{\mu}(\|X_t\| \neq 0)
      =\mu(\phi).
    \end{align}
  \item[(3)]
    Suppose $m( x:\gamma(x)=\gamma_0 )>0$.
    Let $f \in \mathscr B^+(E)$ be such that $\langle f, \phi^* \rangle_m > 0$ and $\| \phi^{-1}f \|_\infty < \infty$. Then for each $\mu \in \mathcal M_E^1$,
    \begin{align}
      \{\eta_t X_t(f) ; \mathbf P_{\mu}(\cdot |\|X_t\| \neq 0) \}
      \xrightarrow[t\to \infty]{\operatorname{law}} \langle f, \phi^*\rangle_m \mathbf z^{(\gamma_0 - 1)}.
    \end{align}
  \end{itemize}
  Here, $\eta_t := \big( C_X(\gamma_0 - 1) t \big)^{- \frac {1} {\gamma_0 - 1} }$, $C_X := \langle \mathbf 1_{\gamma(\cdot) = \gamma_0} \kappa\cdot \phi^{\gamma_0}, \phi^* \rangle_m$ and $\mathbf z^{(\gamma_0 - 1)}$ is a random variable with Laplace transform given by \eqref{e:zloltarev} (with $\alpha=\gamma_0 -1$).
\end{thm}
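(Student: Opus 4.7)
The plan is to reduce all three assertions to the asymptotic behaviour of the cumulant
\begin{align}
v_t(x) := -\log \mathbf P_{\delta_x}(\|X_t\|=0) = \lim_{\theta\to\infty} V_t(\theta\mathbf 1)(x),
\end{align}
together with its perturbations $V_t(\theta\eta_t f)$. By the branching property and the exponential formula, $\mathbf P_\mu(\|X_t\|\neq 0) = 1 - e^{-\mu(v_t)}$ and, for $f\geq 0$,
\begin{align}
\mathbf P_\mu\bigl[e^{-\theta \eta_t X_t(f)}\bigm|\|X_t\|\neq 0\bigr]
= \frac{e^{-\mu(V_t(\theta\eta_t f))}-e^{-\mu(v_t)}}{1-e^{-\mu(v_t)}},
\end{align}
so parts (1)--(3) follow once we establish (a) the finiteness of $v_t$, (b) the sharp asymptotic $v_t(x)\sim\eta_t\phi(x)$, and (c) an analogue for $V_t(\theta\eta_t f)$. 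Part (1) is handled by a Grey-type criterion: since $\gamma(x)\ge \gamma_0>1$ and $\kappa(x)\ge \kappa_0>0$, the nonlinear term $\psi_0(x,z):=\kappa(x)z^{\gamma(x)}$ dominates a spatially blind CSBP mechanism satisfying Grey's integrability condition, and a comparison via the integral equation \eqref{eq:FKPP_in_definition} and the monotonicity of $\theta\mapsto V_t(\theta)$ yields finite (indeed uniformly bounded) $v_t$.

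For part (2), the starting point is the nonlinear evolution $\partial_t v_t = L v_t - \kappa v_t^{\gamma(\cdot)}$. Pairing with $\phi^*$ and using $L^*\phi^*=0$, the scalar $W_t := \langle v_t,\phi^*\rangle_m$ satisfies
\begin{align}
\frac{dW_t}{dt} = -\bigl\langle \kappa v_t^{\gamma(\cdot)},\phi^*\bigr\rangle_m.
\end{align}
The key analytic step is an \emph{equalisation lemma}: using the uniform spectral-gap estimate \eqref{eq:q(t,x,y)} together with the Duhamel representation $v_t = P_s^\beta v_{t-s} - \int_0^s P_{s-u}^\beta[\kappa v_{t-s+u}^{\gamma(\cdot)}]\,du$ (applied with, say, $s=t/2$), one shows $\sup_{x\in E}|v_t(x)/\phi(x)-W_t| = o(W_t)$ as $t\to\infty$. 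Once equalisation holds, $W_t^{\gamma(x)-\gamma_0}\to 0$ for $x$ off $\{\gamma=\gamma_0\}$, so dominated convergence isolates the leading contribution and
\begin{align}
\frac{dW_t}{dt} = -C_X W_t^{\gamma_0}(1+o(1)).
\end{align}
Integration gives $W_t^{-(\gamma_0-1)}\sim(\gamma_0-1)C_X t$, so $W_t\sim\eta_t$; equalisation then promotes this to $v_t(x)\sim\eta_t\phi(x)$ uniformly, yielding $\mathbf P_\mu(\|X_t\|\neq 0)\sim\mu(v_t)\sim\eta_t\mu(\phi)$. When $m(\gamma=\gamma_0)=0$, a Karamata-type refinement of the same ODE analysis of $\langle \kappa v_t^{\gamma(\cdot)},\phi^*\rangle_m$ as $W_t\to 0$ still produces regular variation of index $-1/(\gamma_0-1)$.

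Part (3) follows by applying the same apparatus to $V_t(\theta\eta_t f)$. Since $\|\phi^{-1}f\|_\infty<\infty$, the initial datum $\theta\eta_t f$ is uniformly small and controlled by $\phi$, so the equalisation argument gives $V_t(\theta\eta_t f)(x)\sim U_t\phi(x)$, where $U_t:=\langle V_t(\theta\eta_t f),\phi^*\rangle_m$ obeys $U_t' = -C_X U_t^{\gamma_0}(1+o(1))$ with $U_0=\theta\eta_t\langle f,\phi^*\rangle_m$. Solving explicitly,
\begin{align}
U_t^{-(\gamma_0-1)} = \bigl(\theta\eta_t\langle f,\phi^*\rangle_m\bigr)^{-(\gamma_0-1)} + (\gamma_0-1)C_X t + o(t),
\end{align}
and using $\eta_t^{-(\gamma_0-1)}=(\gamma_0-1)C_X t$ gives $\eta_t^{-1}U_t\to\bigl(1+(\theta\langle f,\phi^*\rangle_m)^{-(\gamma_0-1)}\bigr)^{-1/(\gamma_0-1)}$. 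Substituting into the conditional-Laplace display and using $e^{-x}\sim 1-x$ for $x\to 0$ produces
\begin{align}
\lim_{t\to\infty}\mathbf P_\mu\bigl[e^{-\theta\eta_t X_t(f)}\bigm|\|X_t\|\neq 0\bigr]
= 1-\bigl(1+(\theta\langle f,\phi^*\rangle_m)^{-(\gamma_0-1)}\bigr)^{-1/(\gamma_0-1)},
\end{align}
which is exactly the Laplace transform of $\langle f,\phi^*\rangle_m\,\mathbf z^{(\gamma_0-1)}$.

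The main obstacle I anticipate is the equalisation lemma in part (2): propagating the linear spectral-gap bound \eqref{eq:q(t,x,y)} through the nonlinearity $v\mapsto \kappa v^{\gamma(\cdot)}$ with a \emph{spatially varying} exponent. Since $z\mapsto z^{\gamma(x)}$ is not Lipschitz near $0$, a direct Gronwall argument fails; one must combine the uniform smallness of $v_t$ from part (1) with quantitative bounds on $\|v_t/\phi\|_\infty$ to close the iteration, and then carefully exchange the $t\to\infty$ limit with spatial integration against $\phi^*$ to extract the precise constant $C_X$ associated with the set $\{\gamma=\gamma_0\}$.
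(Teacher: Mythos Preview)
Your overall reduction and the treatment of parts (1) and (2) track the paper closely: the equalisation statement $v_t(x)/\phi(x)\sim W_t:=\langle v_t,\phi^*\rangle_m$ and the scalar ODE $W_t'=-\langle\kappa v_t^{\gamma(\cdot)},\phi^*\rangle_m$ are exactly Propositions~\ref{prop: convergence in a same speed}--\ref{prop: regularly varying of vt-phi-star}. The one substantive difference in part~(2) is \emph{how} equalisation is obtained. You propose Duhamel plus the spectral gap \eqref{eq:q(t,x,y)}, and correctly flag the non-Lipschitz nonlinearity $z\mapsto z^{\gamma(x)}$ as the obstacle. The paper sidesteps this entirely: via the spine representation one has the exact identity $(\phi^{-1}v_t)(x)=\dot{\mathbf P}^{(\phi)}_{\delta_x}[Y_t(\phi)^{-1}]$ (see \eqref{eq:vt-and-Y}), and then a split $Y_t=Y_t^{(0,t_0]}+Y_t^{(t_0,t]}$ together with \eqref{eq: asymptotic for q_t(x,y)} gives equalisation directly (Lemmas~\ref{lem:bound_for_epsilon1}--\ref{lem:upperbound_of_epsilon-2}), without ever having to propagate a Gronwall bound through the nonlinear term. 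Your Duhamel route may also work, but it would need the bootstrap you allude to, whereas the spine identity removes the circularity.

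For part~(3) the approaches diverge and your argument has a real gap. You fix a terminal time $T$, set $U(s)=\langle V_s(\theta\eta_T f),\phi^*\rangle_m$, and claim $U'(s)=-C_X U(s)^{\gamma_0}(1+o(1))$. But this $o(1)$ requires $V_s(\theta\eta_T f)(x)/\phi(x)\sim U(s)$ \emph{at every intermediate time} $s\in[0,T]$, uniformly as $T\to\infty$; terminal-time equalisation alone does not give the ODE. At $s=0$ the profile is $\theta\eta_T f$, not a multiple of $\phi$, so equalisation fails near $s=0$; for $s$ of order $T$ the nonlinearity has acted for a long time and the linear approximation $V_s\approx\theta\eta_T P^\beta_s f$ is no longer available. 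Establishing the needed uniform-in-$(s,T)$ equalisation for a $T$-dependent initial datum is a genuinely different statement from Proposition~\ref{prop: convergence in a same speed}, and you have not supplied it. The paper avoids this by abandoning the time-ODE picture altogether: the generalized spine decomposition (Theorems~\ref{thm: size-biased decomposition}--\ref{thm: spine representation} with $g=f$) yields the $\theta$-integral representation \eqref{eq: equation for Vt(theta f) for theta}, which after rescaling becomes \eqref{eq: equation for normalized V_T}. One then compares this directly with the fixed-point characterisation of the limit $G$ (Lemma~\ref{lem: characterize the general Mittag-Leffler distribution}) via the ergodicity of the spine (Lemmas~\ref{lem: ergodicity of the underlying process}--\ref{lem: Fatou-ergodic lemma for the uderlying process}) and closes with a Gronwall-type uniqueness argument (Lemma~\ref{lem: F is zero}). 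This handles general $f$ with $\|\phi^{-1}f\|_\infty<\infty$ in one stroke, precisely because the representation is in $\theta$ rather than in the evolution time.
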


\subsection{Methods and overview}
To establish Theorem \ref{thm: main theorem}(2) and Theorem \ref{thm: main theorem}(3), we use a spine decomposition theorem for 	$X$.
Roughly speaking, the spine is the trajectory of an immortal moving particle and the spine decomposition theorem says that, after a martingale change of measure, the transformed superprocess can be decomposed in law as the sum of a copy of the original superprocess and a measure-valued immigration process along this spine, see \cite{EckhoffKyprianouWinkel2015Spines, EnglanderKyprianou2004Local, LiuRenSong2009Llog}.
The martingale used for the change of measure is $(e^{-\lambda t} X_t(\phi))_{t\geq 0}$.
Under Assumptions \ref{asp: 1} and \ref{asp: 3}, the spine process $\{\xi; \Pi^{(\phi)}\}$ is an ergodic process.
We take advantage of this ergodicity to study the asymptotic behavior of the superprocess.

Similar idea has already been used by Powell \cite{Powell2015An-invariance} to establish results parallel to \eqref{eq: Kolmogorov type result with 2rd moment} and \eqref{eq: Ygalom type result with 2rd moment} for a class of critical branching diffusion processes.
Let $\{(Y_t)_{t\geq 0}; P\}$ be a branching diffusion process in a bounded domain with finite second moment.
As have been discussed in \cite{Powell2015An-invariance}, a direct study of the partial differential equation satisfied by the survival probability $(t,x) \mapsto P_{\delta_x}(\|Y_t\| \neq 0)$ is tricky.
Instead, by using a spine decomposition approach, Powell \cite{Powell2015An-invariance} showed that the survival probability decays like $a(t)\phi(x)$, where $\phi(x)$ is the principal eigenfunction of the mean semigroup of $(Y_t)$ and $a(t)$ is a function capturing the uniform speed.
Then the problem is reduced to the study of a single ordinary differential equation satisfied by $a(t)$.
Later, inspired by \cite{Powell2015An-invariance}, we gave in \cite{RenSongSun2017Spine} a similar proof of \eqref{eq: Kolmogorov type result with 2rd moment} for a class of general critical superprocesses with finite second moment.
In this paper, we will  generalize these arguments to a class of general critical superprocesses without finite second moment and establish Theorem \ref{thm: main theorem}(2).
For the conditional weak convergence result, i.e., Theorem \ref{thm: main theorem}(3), we use a fact that the Laplace transform given in \eqref{e:zloltarev} can be characterized by a non-linear delay equation (see Lemma \ref{lem: characterize the general Mittag-Leffler distribution}).
Using the spine method, we show that the Laplace transform of the one-dimensional distributions of the superprocess, after a proper rescaling, can be 	characterized by a similar equation (see \eqref{eq: equation for normalized V_T}).
Then, the desired convergence of the distributions can be established by a comparison between the equations.	
Again, the ergodicity of the spine process plays a central role in the comparison.

A similar idea for establishing weak convergence through a comparison of the equations satisfied by the distributions has already been used by us in \cite{RenSongSun2017A-2-spine, RenSongSun2017Spine}.
We characterized the exponential distribution using its double size-biased transform; and to help us make the comparison, we investigated the double size-biased transform of the corresponding processes.
However, the double-size-biased transform of a random variable requires its second moment being finite.
Since we do not assume the second moment condition in this paper, we can not use the method of double size-biased transform.

In \cite{Powell2015An-invariance} (for critical branching diffusions in a bounded domain with finite variance) and in \cite{RenSongSun2017Spine, RenSongZhang2015Limit} (for general critical superprocesses with finite variance), the conditional weak convergence was proved in two steps.
First, a convergence result was established for $\phi$, the principal eigenfunction of the mean semigroup of the corresponding process, and then the second moment condition was used to extend the result to more general testing functions.
However, in the present case, since we are not assuming the second moment condition, this type of argument does not work.
Instead, we use a generalized spine decomposition theorem, which is developed in \cite{RenSongSun2017Spine}, to establish Theorem \ref{thm: main theorem}(3) for a large class of general testing functions in one stroke.

The rest of this paper is organized as follows:
In Subsections \ref{sec: Asymptotic equivalence}, \ref{sec: Regularly variation} and \ref{sec: Superprocesses}, we give some preliminary results about the asymptotic equivalence, regularly variation and superprocesses, respectively.
In Subsection \ref{sec: Spine decompositions}, we present the generalized spine decomposition theorem.
In Subsection \ref{sec: Ergodicity}, we discuss the ergodicity of the spine process.
In Subsections \ref{sec: proof of result 1} and \ref{sec: proof of result 2} we give the poofs of Theorem \ref{thm: main theorem}(1) and \ref{thm: main theorem}(2), respectively.
In Subsection \ref{sec: conditional distribution}, we give the equation that characterize the one-dimensional distributions.
In Subsection \ref{sec: proof of result 3}, we give the proof of Theorem \ref{thm: main theorem}(3).
In Appendix \ref{sec: Characterizing the Zolotarev's distribution using an non-linear delay equation}, we give the equation that characterizes the distribution with Laplace transform \eqref{e:zloltarev}, which is used in the proof of Theorem \ref{thm: main theorem}(3).

\section{Preliminaries}
\label{sec: Preliminaries}
\subsection{Asymptotic equivalence} 
\label{sec: Asymptotic equivalence}
In this subsection, we give a lemma on asymptotic equivalence.	
Let $t_0 \in [-\infty,\infty]$.
In this subsection, $(E, \mathscr E)$ is assumed to be a  measurable space.
For any $f_0, f_1\in \mathscr B^{++}({\mathbb R})$, we say $f_0$ and $f_1$ are \emph{asymptotically equivalent at $t_0$}, if $\big|\frac{f_0(t)}{f_1(t)} - 1\big| \xrightarrow[t\to t_0]{} 0$; 
and in this case, we write $f_0(t) \stackrel[t\to t_0]{}{\sim} f_1(t)$.
For any strictly positive measurable functions $g_0, g_1$ on $\mathbb R\times E$, we say $g_0$ and $g_1$ are \emph{uniformly asymptotically equivalent at $t_0$}, if $\sup_{x\in E}\big|\frac{g_0(t,x)}{g_1(t,x)} - 1\big| \xrightarrow[t\to t_0]{} 0$; and in this case, we write $g_0(t,x)\stackrel[t\to t_0]{x\in E}{\sim}g_1(t,x)$.

\begin{lem} 
  \label{lem: asymptotic equivalent of integration}
	Suppose that $f_0,f_1$ are bounded strictly positive measurable functions on $\mathbb R \times E$ and $f_0(t,x)\stackrel[t\to t_0]{x\in E}{\sim}f_1(t,x)$.
  If $\rho$ is a finite non-degenerate measure on $(E, \mathscr E)$, then
  \begin{align}
    \int_E f_0(t,x)\rho(dx)
    \stackrel[t\to t_0]{}{\sim}
    \int_E f_1(t,x)\rho(dx).
  \end{align}
\end{lem}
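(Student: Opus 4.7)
The plan is to reduce the claim to a one-line inequality by exploiting the fact that uniform asymptotic equivalence gives a pointwise-in-$x$ multiplicative error bound, and that the single strictly positive function $f_1$ serves as a natural majorant for the difference $|f_0-f_1|$.

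First I would set
\begin{align}
  \varepsilon(t) := \sup_{x\in E}\Bigl|\frac{f_0(t,x)}{f_1(t,x)} - 1\Bigr|,
\end{align}
which by hypothesis satisfies $\varepsilon(t) \to 0$ as $t\to t_0$. Since $f_1(t,x) > 0$ for every $(t,x)$, rearranging this gives the pointwise bound $|f_0(t,x) - f_1(t,x)| \leq \varepsilon(t)\, f_1(t,x)$ for all $x \in E$.

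Next I would integrate against $\rho$. Because $f_0$ and $f_1$ are bounded and $\rho$ is finite, both integrals $I_i(t) := \int_E f_i(t,x)\,\rho(dx)$ ($i=0,1$) are finite, and integrating the pointwise bound yields
\begin{align}
  \bigl|I_0(t) - I_1(t)\bigr| \leq \varepsilon(t)\, I_1(t).
\end{align}
To turn this into the desired $I_0(t)/I_1(t) \to 1$ statement I must divide by $I_1(t)$, and this is the only point where one has to be careful: I would use that $\rho$ is non-degenerate (so $\rho(E) > 0$) together with the strict positivity of $f_1$ on all of $E$ to conclude $I_1(t) > 0$ for every $t$. Dividing then gives $|I_0(t)/I_1(t) - 1| \leq \varepsilon(t) \to 0$, which is exactly the asymptotic equivalence of $I_0$ and $I_1$ at $t_0$.

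There is no real obstacle here; the lemma is a soft measure-theoretic transfer of uniform pointwise convergence of ratios to convergence of ratios of integrals, and the role of the ``non-degenerate'' hypothesis on $\rho$ is solely to guarantee that we do not divide by zero.
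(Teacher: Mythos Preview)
Your proof is correct and follows essentially the same approach as the paper: both arguments bound $\bigl|\int f_0\,d\rho/\int f_1\,d\rho - 1\bigr|$ by $\sup_{x\in E}|f_0(t,x)/f_1(t,x) - 1|$, using that $f_1\,d\rho/\int f_1\,d\rho$ is a probability measure. The only cosmetic difference is that the paper manipulates the ratio of integrals directly, whereas you first bound $|I_0 - I_1|$ and then divide; algebraically these are the same computation.
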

\begin{proof}
	Since
  \begin{align}
    &\Big| \frac{	\int_E f_0(t,x) \rho(dx) }{ 	\int_E f_1(t,x) \rho(dx)  } - 1 \Big|
      = \Big| \int_E \frac{f_0(t,x)}{f_1(t,x)} \frac{f_1(t,x) \rho(dx)}{	\int_E f_1(t,y) \rho(dy)  } - 1\Big|
    \\&\quad \leq \int_E \Big|  \frac{f_0(t,x)}{f_1(t,x)} - 1 \Big| \frac{f_1(t,x) \rho(dx)}{	\int_E f_1(t,y) \rho(dy)  }
    \leq \sup_{x\in E} \Big|  \frac {f_0(t,x)} {f_1(t,x)} - 1 \Big|
    \xrightarrow[t\to t_0]{} 0,
  \end{align}
  the assertion is valid.
\end{proof}

\subsection{Regular variation}
\label{sec: Regularly variation}
In this subsection, we give some preliminary results on regular variation.
We refer the reader to \cite{BinghamGoldieTeugels1989Regular} for more results on  regular variation.
For $f\in \mathscr B^{++}((0,\infty))$ and $\alpha \in (- \infty, \infty)$, we say $f$ is regularly varying at $\infty$ (resp. at $0$) with index $\alpha$ if for any $u \in (0,\infty)$,
\begin{align}
	\lim_{t\to\infty}\frac{f(ut)}{f(t)}
  = u^\alpha
	\quad \Big(\text{resp. } \lim_{t\to 0}\frac{f(u t)}{f(t)}
	= u^\alpha\Big).
\end{align}
In this case we write  $f\in \mathcal R^\infty_\alpha$ (resp. $f\in \mathcal R^0_\alpha$).
Further, if $\alpha = 0$, then we say $f$ is slowly varying.
According to \cite[Theorem 1.3.1]{BinghamGoldieTeugels1989Regular}, if $L$ is a function slowly varying at $\infty$, then it can be written in the form
\begin{align}
	L(t)
	= c(t) \exp\Big\{\int_{t_0}^t \epsilon(u) \frac{du}{u}\Big\},\quad t\geq t_0,
\end{align}
for some $t_0>0$, where $(c(t))_{t\geq t_0}$ and $(\epsilon(t))_{t\geq t_0}$ are measurable functions with $c(t) \xrightarrow[t\to \infty]{} c \in (0,\infty)$ and $\epsilon(t) \xrightarrow[t\to \infty]{} 0$.
In particular, we know that, there is $t_0 > 0$ large enough such that $L$ is locally bounded on $[t_0,\infty)$.

\begin{lem}[{\cite[Propositions 1.5.8 and 1.5.10]{BinghamGoldieTeugels1989Regular}}]
  \label{lem: exchange slowly varying function and integration}
  Suppose that $L\in \mathcal R^\infty_0$.
  \begin{itemize}
  \item
    Let $t_0\in (0,\infty)$ be large enough so that $L$ is locally bounded on $[t_0,\infty)$. If $\alpha>0 $, then
    \begin{align}
      \int_{t_0}^t L(u)du^\alpha
      \stackrel[t\to \infty]{}{\sim} t^\alpha L(t).
    \end{align}
  \item
    If $\alpha< 0$ then $\int_t^\infty L(u) du^\alpha < \infty$ for $t$ large enough, and
    \begin{align}
      -\int_t^\infty L(u)du^\alpha
      \stackrel[t\to \infty]{}{\sim} t^\alpha L(t).
    \end{align}
  \end{itemize}
\end{lem}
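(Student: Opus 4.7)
My plan is to reduce both assertions to Karamata's theorem, whose standard proof proceeds by a dilation change of variables combined with the uniform convergence theorem for slowly varying functions and Potter's bounds.

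For part (i), I would write $du^\alpha = \alpha u^{\alpha-1}du$ and substitute $u = ts$ to obtain
\begin{align}
  \frac{\int_{t_0}^t L(u)\,du^\alpha}{t^\alpha L(t)}
  = \alpha \int_{t_0/t}^{1} \frac{L(ts)}{L(t)} s^{\alpha-1}\,ds.
\end{align}
Since $L \in \mathcal R^\infty_0$, the uniform convergence theorem yields $L(ts)/L(t) \to 1$ as $t \to \infty$ uniformly for $s$ in any compact subset of $(0,\infty)$. To pass to the limit under the integral, I would invoke Potter's bound: for any $\delta \in (0,\alpha)$, there exist constants $A, T > 0$ such that $L(ts)/L(t) \le A\max(s^{\delta}, s^{-\delta})$ whenever $t, ts \ge T$. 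This dominates the integrand by $A\, s^{\alpha - 1 - \delta}$ on $(0,1]$, which is integrable because $\alpha - \delta > 0$. Dominated convergence then gives $\alpha \int_0^1 s^{\alpha-1}\,ds = 1$, establishing the claimed asymptotic equivalence.

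For part (ii), I would first verify the finiteness of $\int_t^\infty L(u)\,du^\alpha$ for large $t$ using a Potter bound with $\delta \in (0, |\alpha|)$: the integrand is eventually dominated by a constant times $u^{\alpha - 1 + \delta}$, which is integrable at $\infty$ since $\alpha + \delta < 0$. After substitution $u = ts$,
\begin{align}
  \frac{-\int_t^\infty L(u)\,du^\alpha}{t^\alpha L(t)}
  = -\alpha \int_1^\infty \frac{L(ts)}{L(t)}\, s^{\alpha-1}\,ds,
\end{align}
and the same Potter bound (now giving $A s^{\delta}$ for $s \ge 1$) makes the integrand dominated by an integrable function on $[1,\infty)$. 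Dominated convergence yields the limit $-\alpha \int_1^\infty s^{\alpha-1}\,ds = 1$.

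The only nontrivial step is the justification for passing to the limit inside the integral, where pointwise convergence of $L(ts)/L(t)$ is not enough because the domains of integration are unbounded (either toward $0$ or toward $\infty$). Potter's bound—derived from the representation theorem for slowly varying functions quoted just before the lemma—is the essential tool that overcomes this obstacle, and the sign constraint $\alpha > 0$ (respectively $\alpha < 0$) enters precisely to ensure integrability of the dominating function.
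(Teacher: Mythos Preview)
Your proof is correct and follows the standard Karamata argument. Note, however, that the paper does not supply its own proof of this lemma: it is quoted directly from \cite[Propositions~1.5.8 and~1.5.10]{BinghamGoldieTeugels1989Regular} and used as a black box. Your reconstruction via the dilation $u = ts$, uniform convergence, and Potter's bounds is precisely the classical proof found in that reference, so there is no discrepancy to discuss.
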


\begin{cro}
  \label{cro: power law and ingetration}
  Suppose that $l\in \mathcal R^0_0$.
  \begin{itemize}
  \item
    Let $s_0\in (0,\infty)$ be small enough so that $l$ is locally bounded on $(0,s_0]$.
    If $\alpha < 0$, then
    \begin{align}
      -\int_s^{s_0} l(u)du^\alpha
      \stackrel[s\to 0]{}{\sim} s^{\alpha} l(s).
    \end{align}
  \item
    If $\alpha > 0$, then $\int_0^s l(u)du^\alpha<\infty$ for $s$ small enough, and
    \begin{align}
      \int_0^s l(u)du^\alpha
      \stackrel[s\to 0]{}{\sim} s^{\alpha} l(s).
    \end{align}
  \end{itemize}
\end{cro}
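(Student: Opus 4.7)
The plan is to deduce this from Lemma \ref{lem: exchange slowly varying function and integration} via a change of variables that turns behavior at $0$ into behavior at $\infty$. First I would set $L(t) := l(1/t)$ for $t$ large, and check that $L \in \mathcal R^\infty_0$: indeed, for any $u>0$,
\begin{align}
\frac{L(ut)}{L(t)} = \frac{l(1/(ut))}{l(1/t)} \xrightarrow[t\to\infty]{} 1,
\end{align}
by slow variation of $l$ at $0$. The local boundedness of $l$ on $(0,s_0]$ translates into local boundedness of $L$ on $[1/s_0, \infty)$, so Lemma \ref{lem: exchange slowly varying function and integration} is applicable to $L$.

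Next I would apply the substitution $u = 1/v$ in each integral. Under this substitution, $u^\alpha = v^{-\alpha}$ and the differentials match, so integrating $u$ from $s$ to $s_0$ corresponds to integrating $v$ from $1/s_0$ to $1/s$ (with the orientation reversed). For the first claim, where $\alpha<0$, this gives
\begin{align}
-\int_s^{s_0} l(u)\, du^\alpha = \int_{1/s_0}^{1/s} L(v)\, dv^{-\alpha}.
\end{align}
Since $-\alpha>0$, the first bullet of Lemma \ref{lem: exchange slowly varying function and integration} applied to $L$ yields the right-hand side is $\sim (1/s)^{-\alpha} L(1/s) = s^\alpha l(s)$ as $s\to 0$, which is exactly the desired asymptotic.

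For the second claim, with $\alpha>0$, the same substitution gives
\begin{align}
\int_0^s l(u)\, du^\alpha = -\int_{1/s}^\infty L(v)\, dv^{-\alpha},
\end{align}
with the index $-\alpha<0$. The second bullet of Lemma \ref{lem: exchange slowly varying function and integration} then guarantees the integral on the right is finite for $s$ sufficiently small and is $\sim (1/s)^{-\alpha} L(1/s) = s^\alpha l(s)$ as $s\to 0$, finishing the proof.

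I do not expect any genuine obstacle: the argument is purely a change-of-variables reduction. The only points requiring mild care are verifying that $L = l(1/\cdot)$ inherits slow variation at $\infty$ and that the local boundedness hypothesis transfers correctly, both of which are immediate. No analytic subtleties arise because Lemma \ref{lem: exchange slowly varying function and integration} already handles the convergence of the improper integrals.
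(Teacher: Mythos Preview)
Your proposal is correct and follows essentially the same approach as the paper: define $L(t)=l(1/t)$, transfer the local boundedness, apply the substitution $u=1/v$ to convert the integrals over $(0,s_0]$ into integrals over $[1/s_0,\infty)$, and then invoke Lemma~\ref{lem: exchange slowly varying function and integration} with exponent $-\alpha$. The paper carries out exactly this computation for the case $\alpha<0$ and simply remarks that the case $\alpha>0$ is similar, which you spell out explicitly.
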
	

\begin{proof}
	Since $l \in \mathcal R^0_0$, we know that, if one defines $L(t):=l(t^{-1})$ for each $t\in (0,\infty)$, then $ L \in \mathcal R^\infty_0$.
	Therefore, there exists $t_0\in (0,\infty)$ such that $L$ is locally bounded on $[t_0,\infty)$.
	Taking $s_0:= t_0^{-1}$, we then immediately get that $l$ is locally bounded on $(0,s_0]$.
	If $\alpha<0 $, then according to Lemma \ref{lem: exchange slowly varying function and integration}, we have
  \begin{align}
    \int_{t_0}^t L(u)du^{-\alpha}
    \stackrel[t\to \infty]{}{\sim} t^{-\alpha}  L(t).
  \end{align}
	Replacing $t$ with $s^{-1}$, we have
  \begin{align}
    -\int_{s}^{s_0} l(u)du^{\alpha}
    =\int_{s_0^{-1}}^{s^{-1}} L(u)du^{-\alpha}
    \stackrel[s\to 0]{}{\sim}  (s^{-1})^{-\alpha}L(s^{-1})
    =s^\alpha l(s),
  \end{align}
	as desired.
	The second assertion can be proved similarly.
\end{proof}

\begin{lem}[{\cite[Theorem 1.5.12]{BinghamGoldieTeugels1989Regular}}] 
  \label{lem: regularly variation and inverse}
	If $f \in \mathcal R_\alpha^\infty$ with $\alpha > 0$, there exists $g \in \mathcal R^\infty_{1/\alpha}$ with
  \begin{align}
    g(f(t))
    \stackrel[t\to \infty]{}{\sim}
    f(g(t))
    \stackrel[t\to \infty]{}{\sim}
    t.
  \end{align}
	Here $g$ is determined uniquely up to asymptotic equivalence as $t\to \infty$.
\end{lem}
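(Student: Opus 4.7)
The plan is to construct an asymptotic inverse $g$ by first replacing $f$ with a genuinely strictly increasing asymptotically equivalent function $\tilde f$, then taking the strict inverse of $\tilde f$ and transferring back. To carry out the first step, I would write $f(t)=t^\alpha L(t)$ with $L\in\mathcal R_0^\infty$ and invoke Karamata's representation theorem (as recalled in the paper just before Lemma \ref{lem: exchange slowly varying function and integration}) to obtain $L(t)=c(t)\exp\bigl(\int_{t_0}^t\epsilon(u)\,du/u\bigr)$ with $c(t)\to c\in(0,\infty)$ and $\epsilon(t)\to 0$. Setting $\tilde f(t):=c\cdot t^\alpha\exp\bigl(\int_{t_0}^t\epsilon(u)\,du/u\bigr)$, one has $\tilde f(t)\stackrel[t\to\infty]{}{\sim}f(t)$, and the logarithmic derivative $(\alpha+\epsilon(t))/t$ is eventually strictly positive since $\alpha>0$, so $\tilde f$ is eventually strictly increasing with $\tilde f(t)\to\infty$. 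After modifying on a compact interval if necessary, $\tilde f$ becomes a continuous strictly increasing bijection of $[t_0,\infty)$ onto $[\tilde f(t_0),\infty)$, and I take $g:=\tilde f^{-1}$ on this image, extending by any convenient rule on the bounded complementary interval.

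Next I would verify the four required statements. Since $\tilde f(g(t))=t$ and $f\sim\tilde f$ at infinity with $g(t)\to\infty$, the relation $f(g(t))\sim t$ is immediate. To establish $g\in\mathcal R_{1/\alpha}^\infty$, I fix $\mu>0$, set $s=\tilde f(t)$ so $t=g(s)$, and apply $\tilde f(\mu^{1/\alpha}t)/\tilde f(t)\to\mu$ (which follows from the regular variation of $\tilde f$): rearranging gives $g(\mu s)/g(s)\to\mu^{1/\alpha}$. With regular variation of $g$ in hand, I would then write $f(t)=\tilde f(t)(1+o(1))$ and use the fact that $g(s(1+o(1)))/g(s)\to 1$ (a direct consequence of regular variation together with the uniform convergence theorem) to conclude $g(f(t))/g(\tilde f(t))\to 1$, i.e., $g(f(t))\sim t$. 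Finally, for uniqueness: if $g_1,g_2$ are both asymptotic inverses, then $f(g_1(t))/f(g_2(t))\to 1$, and since $f\sim\tilde f$ with $\tilde f$ strictly increasing and regularly varying with positive index $\alpha$, this ratio equals $(g_1(t)/g_2(t))^\alpha(1+o(1))$ up to a further $(1+o(1))$ factor, forcing $g_1(t)/g_2(t)\to 1$.

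The main obstacle is the lack of any intrinsic monotonicity of $f$; this is precisely what forces the detour through Karamata's representation to obtain an asymptotically equivalent \emph{monotone} $\tilde f$ on which one can take a genuine inverse. A subsidiary technical point is the assertion $g(s(1+o(1)))/g(s)\to 1$, which rests on the uniform convergence theorem for regularly varying functions, i.e., the uniformity of $g(\lambda s)/g(s)\to\lambda^{1/\alpha}$ in $\lambda$ on compact subsets of $(0,\infty)$. Everything else is bookkeeping within the calculus of regular variation.
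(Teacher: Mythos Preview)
The paper does not supply its own proof of this lemma; it is simply quoted from \cite[Theorem~1.5.12]{BinghamGoldieTeugels1989Regular}, so there is no ``paper's proof'' to compare against. Your sketch is essentially the standard argument found in that reference: replace $f$ by an asymptotically equivalent, eventually strictly increasing $\tilde f$ via Karamata's representation, take $g=\tilde f^{-1}$, and transfer. The core steps are sound.

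One point that deserves a firmer argument is your uniqueness paragraph. From $f(g_1(t))/f(g_2(t))\to 1$ you write that this ratio equals $(g_1(t)/g_2(t))^\alpha(1+o(1))$; but the $(1+o(1))$ coming from the slowly varying part $L(g_1(t))/L(g_2(t))$ is only justified once you already know $g_1(t)/g_2(t)$ stays in a compact subset of $(0,\infty)$, which is what you are trying to prove. A cleaner route is to apply your already-constructed $g$ to $f(g_i(t))\sim t$ and use the regular variation of $g$ (with the uniform convergence theorem) together with $g(f(s))\sim s$ to obtain $g_i(t)\sim g(t)$ for $i=1,2$. Alternatively, invoke the Potter bounds on $f$ to first confine $g_1/g_2$ to a compact set, and then finish as you wrote.
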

\begin{cro} 
  \label{cro: regularly varing and inverse with alpha < 0}
	If $f \in \mathcal R_\alpha^0$ with $\alpha < 0$, there exists $g \in \mathcal R^\infty_{1/\alpha}$ with
  \begin{align} 
    \label{eq: asymptotic inverse with alpha < 0}
    g(f(t))
    \stackrel[t\to 0]{}{\sim}
    t;
    \quad
    f(g(t))
    \stackrel[t\to \infty]{}{\sim}
    t.
  \end{align}
	Here $g$ is determined uniquely up to asymptotic equivalence as $t\to \infty$.
\end{cro}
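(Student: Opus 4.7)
The plan is to reduce the statement to Lemma \ref{lem: regularly variation and inverse} (the case $\alpha > 0$, limit at infinity) by the change of variable $t \mapsto 1/t$, which converts regular variation at $0$ into regular variation at $\infty$ and flips the sign of the index.

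More precisely, I would first define $\tilde f(t) := f(1/t)$ for $t > 0$. For any $u > 0$,
\begin{align}
\frac{\tilde f(ut)}{\tilde f(t)}
= \frac{f(1/(ut))}{f(1/t)}
\xrightarrow[t\to\infty]{} \Big(\frac{1}{u}\Big)^{\alpha}
= u^{-\alpha},
\end{align}
since $1/t \to 0$ and $f \in \mathcal R^0_\alpha$. Hence $\tilde f \in \mathcal R^\infty_{-\alpha}$, and $-\alpha > 0$, so Lemma \ref{lem: regularly variation and inverse} applies to $\tilde f$ and yields a function $\tilde g \in \mathcal R^\infty_{-1/\alpha}$, unique up to asymptotic equivalence at $\infty$, satisfying $\tilde g(\tilde f(t)) \sim t$ and $\tilde f(\tilde g(t)) \sim t$ as $t \to \infty$.

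Next I would define $g(t) := 1/\tilde g(t)$ on a neighborhood of infinity where $\tilde g > 0$ (and extend arbitrarily elsewhere, as this does not affect asymptotics at $\infty$). A direct check using $\tilde g \in \mathcal R^\infty_{-1/\alpha}$ gives
\begin{align}
\frac{g(ut)}{g(t)} = \frac{\tilde g(t)}{\tilde g(ut)} \xrightarrow[t\to\infty]{} u^{1/\alpha},
\end{align}
so $g \in \mathcal R^\infty_{1/\alpha}$. Then $f(g(t)) = f(1/\tilde g(t)) = \tilde f(\tilde g(t)) \sim t$ as $t \to \infty$, giving the second relation in \eqref{eq: asymptotic inverse with alpha < 0}. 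For the first relation, for $t > 0$ set $s = 1/t$; then $f(t) = \tilde f(s)$, so
\begin{align}
g(f(t)) = \frac{1}{\tilde g(\tilde f(s))} \stackrel[s\to\infty]{}{\sim} \frac{1}{s} = t,
\end{align}
which is exactly $g(f(t)) \sim t$ as $t \to 0$.

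Finally, uniqueness: if $g_1, g_2 \in \mathcal R^\infty_{1/\alpha}$ both satisfy \eqref{eq: asymptotic inverse with alpha < 0}, then $\tilde g_i := 1/g_i \in \mathcal R^\infty_{-1/\alpha}$ both satisfy the corresponding relations for $\tilde f$, so by the uniqueness clause in Lemma \ref{lem: regularly variation and inverse} we have $\tilde g_1(t) \sim \tilde g_2(t)$ at $\infty$, whence $g_1(t) \sim g_2(t)$ at $\infty$. I do not expect any real obstacle: the only thing to be careful about is bookkeeping of indices under the inversion $t \mapsto 1/t$, and checking that the ``$t \to 0$'' in the first asymptotic of \eqref{eq: asymptotic inverse with alpha < 0} corresponds, after substitution, to ``$s \to \infty$'' in the statement of Lemma \ref{lem: regularly variation and inverse}.
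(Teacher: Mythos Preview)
Your proof is correct and follows essentially the same approach as the paper: both define $\widetilde f(t)=f(1/t)\in\mathcal R^\infty_{-\alpha}$, apply Lemma~\ref{lem: regularly variation and inverse} to obtain an asymptotic inverse (the paper calls it $h$, you call it $\tilde g$), and then set $g=1/h$ to transfer the relations back. The uniqueness argument via reciprocals is also the same.
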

\begin{proof}
	Since $f \in \mathcal R_\alpha^0$, we know that $\widetilde f \in \mathcal R_{-\alpha}^\infty$ with $\widetilde f(t):= f(t^{-1})$.
	Noticing that $-\alpha > 0$, according to Lemma \ref{lem: regularly variation and inverse}, there exists $h \in \mathcal R_{-1/\alpha}^{\infty}$ such that
  \begin{align} 
    \label{eq: inverse between h and f}
    h(\widetilde f(t))
    \stackrel[t\to \infty]{}{\sim}
    t;
    \quad
    \widetilde f(h(t))
    \stackrel[t\to \infty]{}{\sim}
    t.
  \end{align}
	Denoting by $g := h^{-1} \in \mathcal R_{1/\alpha}^\infty$, the above translates to \eqref{eq: asymptotic inverse with alpha < 0}.

	Now, suppose that there is another $g_0 \in \mathcal R_{1/\alpha}^\infty$ satisfies \eqref{eq: asymptotic inverse with alpha < 0} with $g$ replaced by $g_0$.
	Denoting by $h_0 := g_0^{-1}$, we can verify that \eqref{eq: inverse between h and f} is valid with $h$ replaced by $h_0$.
	According to Lemma \ref{lem: regularly variation and inverse}, $h$ and $h_0$ are asymptotically equivalent at $\infty$.
	Hence, so are $g$ and $g_0$.
\end{proof}

\begin{lem}
  \label{lem:regularly_variation_and_integration}
  Let $(E, \mathscr E)$ be a measurable space and $\rho$ a finite non-degenerate measure on $(E, \mathscr E)$.
  Let $\alpha$ be a bounded measurable function on $E$ with 
  \begin{align}
    \alpha_0
    := \operatorname*{ess\,inf}_{\rho(dx)} \alpha(x)
    := \sup\{r:\rho(x:\alpha(x) < r) = 0\} \in \mathbb R.
  \end{align}
	Then $\big(\int_E t^{\alpha(x)} \rho(dx)\big)_{t\in (0,\infty)} \in \mathcal R^0_{\alpha_0}$.
	Further, if $\rho \{x:\alpha(x) = \alpha_0\}>0$, then
  \begin{align}
    \int_E t^{\alpha(x)} \rho(dx)
    \stackrel[t\to 0]{}{\sim}  \rho\{x:\alpha(x) = \alpha_0\} t^{\alpha_0}.
  \end{align}
\end{lem}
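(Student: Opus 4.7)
The plan is to reduce to the study of $I(t) := \int_E t^{\beta(x)} \rho(dx)$, where $\beta(x) := \alpha(x) - \alpha_0$. By hypothesis $\beta$ is a bounded, non-negative measurable function with $\operatorname{ess\,inf}_\rho \beta = 0$, and
\[
\int_E t^{\alpha(x)} \rho(dx) = t^{\alpha_0}\, I(t).
\]
Thus regular variation at $0$ of the left-hand side with index $\alpha_0$ is equivalent to $I$ being slowly varying at $0$, and the ``further'' statement reduces to $\lim_{t \downarrow 0} I(t) = \rho(\alpha = \alpha_0)$ when $\rho(\alpha = \alpha_0) > 0$. The latter is immediate from dominated convergence: for $t \in (0, 1]$ one has $0 \leq t^{\beta(x)} \leq 1$, and $t^{\beta(x)} \to \mathbf 1_{\{\beta(x) = 0\}}$ as $t \downarrow 0$, so the integral tends to $\rho(\beta = 0) = \rho(\alpha = \alpha_0)$.

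For the slow variation of $I$, I would fix $u > 0$ and $\epsilon > 0$, split $E$ into $A_\epsilon := \{\beta \leq \epsilon\}$ and its complement, and write $I(ut) - I(t) = \int_E (u^{\beta(x)} - 1)\, t^{\beta(x)} \rho(dx)$. On $A_\epsilon$, $u^{\beta(x)}$ lies between $1$ and $u^\epsilon$, so $|u^{\beta(x)} - 1| \leq |u^\epsilon - 1|$; on $A_\epsilon^c$, $t^{\beta(x)} \leq t^\epsilon$ for $t \in (0,1)$, and $|u^{\beta(x)} - 1| \leq u^M + 1$ with $M := \|\beta\|_\infty$. Combining these,
\[
|I(ut) - I(t)| \leq |u^\epsilon - 1|\, I(t) + (u^M + 1)\, \rho(E)\, t^\epsilon.
\]

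The key step is a matching lower bound on $I(t)$. By the definition of the essential infimum, $\rho(\beta \leq \epsilon/2) > 0$, whence for $t \in (0, 1]$,
\[
I(t) \geq \int_{\{\beta \leq \epsilon/2\}} t^{\beta(x)} \rho(dx) \geq t^{\epsilon/2}\, \rho(\beta \leq \epsilon/2).
\]
Dividing the previous bound by $I(t)$ gives $|I(ut)/I(t) - 1| \leq |u^\epsilon - 1| + C(u, \epsilon)\, t^{\epsilon/2}$ for a constant $C(u,\epsilon)$ independent of $t$. Letting $t \downarrow 0$ and then $\epsilon \downarrow 0$ yields $I(ut)/I(t) \to 1$, i.e.\ $I \in \mathcal R^0_0$, which completes the proof via the reduction above. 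The only real subtlety is controlling $I(t)$ from below in the case $\rho(\alpha = \alpha_0) = 0$, when $I(t) \downarrow 0$; this is precisely where the essential-infimum hypothesis on $\alpha$ is used.
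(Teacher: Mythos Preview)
Your argument is correct and uses the same core idea as the paper: split according to $\{\alpha \leq \alpha_0 + \epsilon\}$, exploit that $\rho(\alpha \leq \alpha_0 + \epsilon) > 0$ for every $\epsilon > 0$, and let $\epsilon \downarrow 0$; the ``further'' clause is dominated (or monotone) convergence of $\int_E t^{\alpha(x)-\alpha_0}\rho(dx)$ in both cases. The only difference is packaging: you bound the additive difference $|I(ut)-I(t)|$ and divide by a lower bound on $I(t)$, which treats all $u>0$ at once, whereas the paper bounds the ratio directly for $u\in(0,1]$ (upper bound from $u^{\alpha(x)}\leq u^{\alpha_0}$, lower bound from the $\epsilon$-splitting) and then recovers $u>1$ via $f(ut)/f(t) = \big(f(u^{-1}\cdot ut)/f(ut)\big)^{-1}$. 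Your version is slightly more economical; the paper's makes the monotonicity in $u$ more explicit.
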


\begin{proof}
	If $u \in (0,1]$, then we have
  \begin{align}
    \frac{\int_E u^{\alpha(x)} t^{\alpha(x)} \rho(dx)}{\int_E t^{\alpha(x)} \rho(dx)}
    \leq \frac{\int_E u^{\alpha_0} t^{\alpha(x)} \rho(dx)}{\int_E t^{\alpha(x)} \rho(dx)}
    = u^{\alpha_0},
    \quad t\in (0,\infty).
  \end{align}
	This implies that
  \begin{align}
    \limsup_{(0,\infty) \ni t\to 0}\frac{\int_E u^{\alpha(x)} t^{\alpha(x)} \rho(dx)}{\int_E t^{\alpha(x)} \rho(dx)}	
    \leq u^{\alpha_0}.
  \end{align}
	Also, for any $\epsilon \in (0,\infty)$, we have
  \begin{align}
    &\frac{\int_E u^{\alpha(x)} t^{\alpha(x)} \rho(dx)}{\int_E t^{\alpha(x)} \rho(dx)}
      \geq \frac{ \int_{ \alpha(x) \leq  \alpha_0 + \epsilon } u^{ \alpha(x) } t^{ \alpha(x)} \rho(dx) } { \int_E t^{ \alpha(x) } \rho(dx) }
    \\&\quad \geq u^{ \alpha_0 + \epsilon} \frac{ \int_{ \alpha(x) \leq \alpha_0 + \epsilon } t^{ \alpha(x)} \rho(dx) } { \int_{ \alpha(x) \leq \alpha_0 + \epsilon}t^{\alpha(x)} \rho(dx)+ \int_{\alpha(x) > \alpha_0 + \epsilon} t^{\alpha(x)} \rho(dx)}
    \\&\quad = u^{\alpha_0 + \epsilon} \frac{1}{1+ \frac{\int_{\alpha(x) > \alpha_0 + \epsilon}t^{\alpha(x) - (\alpha_0 + \epsilon)} \rho(dx)}{\int_{\alpha(x) \leq \alpha_0 + \epsilon}t^{\alpha(x)- (\alpha_0 + \epsilon)} \rho(dx)}},
    \quad t\in (0, \infty),
    \\&\quad \xrightarrow[(0,\infty) \ni t\to 0]{}
    u^{\alpha_0 + \epsilon},
  \end{align}
	where the last convergence is due to the monotone convergence theorem.
	Therefore
  \begin{align}
    \liminf_{(0,\infty) \ni t\to 0}\frac{\int_E u^{\alpha(x)} t^{\alpha(x)} \rho(dx)}{\int_E t^{\alpha(x)} \rho(dx)}
    \geq u^{\alpha_0}.
  \end{align}
	Summarizing the above,  we get
  \begin{align}
    \lim_{(0,\infty) \ni t\to 0}\frac{\int_E u^{\alpha(x)} t^{\alpha(x)} \rho(dx)}{\int_E t^{\alpha(x)} \rho(dx)}
    = u^{\alpha_0},	
    \quad u \in (0,1].
  \end{align}
	If $u \in (1,\infty)$, taking $f(x, t):= t^{\alpha(x)}$, from what we have proved, we also have that
  \begin{align}
    \lim_{(0,\infty)\ni t\to 0}\frac{\int_E f(x,u t) \rho(dx)}{\int_E f(x, t) \rho(dx)}
    = \lim_{(0,\infty)\ni t\to 0}\frac{\int_E f(x,t) \rho(dx)}{\int_E f(x, u^{-1} t) \rho(dx)}
    = \big(( u^{-1})^{\alpha_0} \big)^{-1}
    = u^{\alpha_0}.
  \end{align}
	This proved the first part of the lemma.
	
	If further we have $\rho(x:\alpha(x) = \alpha_0)>0$, then by the monotone convergence theorem  it is easy to see that
  \begin{equation}
    \frac{\int_E t^{\alpha(x)} \rho(dx)}{t^{\alpha_0}}
    \xrightarrow[(0,\infty)\ni t\to 0]{} \rho(x:\alpha(x) = \alpha_0)\in (0,\infty).
    \qedhere
  \end{equation}
\end{proof}

\subsection{Superprocesses} 
\label{sec: Superprocesses}
In this subsection, we recall some known results on the $(\xi, \psi)$-superprocess $\{X; \mathbf P\}$.
It is known, see \cite[Theorem 2.23]{Li2011Measure-valued} for example, that \eqref{eq:FKPP_in_definition} can be written as
\begin{align}\label{eq:mean-fkpp}
	V_t f(x) + \int_0^t P^\beta_{t-r} \psi_0(x,V_r f) dr
	= P^\beta_t f(x),
	\quad f \in \mathscr B^+_b(E), t \geq 0,x \in E,
\end{align}
where
\begin{align}
	\psi_0(x,z)
	:= \psi(x,z) + \beta(x)z,
	\quad x \in E,z \geq 0.
\end{align}
Suppose that Assumptions \ref{asp: 1}--\ref{asp: 2} hold.
Since $\phi^*$ is the principal eigenfunction of the semigroup $(P_t^{\beta*})_{t\geq 0}$, we have
\begin{align}
  \langle P^\beta_{t} f, \phi^* \rangle_m
  = \langle f, P^{\beta*}_{t}\phi^* \rangle_m
  = \langle f, \phi^* \rangle_m,
  \quad f\in \mathscr B^+_b(E), t\geq 0.
\end{align}
Therefore, integrating both sides of \eqref{eq:mean-fkpp} with respect to the measure $\phi^*dm$, we get that
\begin{align}
  \langle V_tf,\phi^*\rangle_m + \int_0^t \langle \psi_0(\cdot ,V_r f) , \phi^*\rangle_mdr
  = \langle f,\phi^*\rangle_m,
  \quad t\geq 0, f\in \mathscr B^+_b(E).
\end{align}
This can be rearranged as
\begin{align}
  \label{eq:langleVtfphiranglem_equation}
  \langle V_tf,\phi^*\rangle_m + \int_s^t \langle \psi_0(\cdot ,V_r f) , \phi^*\rangle_mdr
  = \langle V_sf,\phi^*\rangle_m,
  \quad t\geq s\geq 0, f\in \mathscr B^+_b(E).
\end{align}

Let $\mathbb W$ be the collection of all $\mathcal M^1_E$-valued c\`{a}dl\`{a}g paths on $[0,\infty)$.
We refer to $\mathbb W$ as the \emph{canonical space of $(X_t)_{t\geq 0}$}.
In fact, $(X_t)$ can be viewed as a $\mathbb W$-valued random variable.
We denote the \emph{coordinate process of $\mathbb W$} by $(W_t)_{t\geq 0}$.

We say that $(X_t)_{t\geq 0}$ is \emph{non-persistent} if $\mathbf P_{\delta_x}(\|X_t\|= 0) > 0$ for all $x\in E$ and $t> 0$.
Suppose that $(X_t)_{t\geq 0}$ is non-persistent, then according to \cite[Section 8.4]{Li2011Measure-valued}, there is a family of measures $(\mathbb N_x)_{x\in E}$ on $\mathbb W$ such that
\begin{itemize}
\item
	for each $x\in E$, $\mathbb N_x (\forall t > 0, \|W_t\|=0) =0$;
\item
	for each $x\in E$, $\mathbb N_x(\|W_0 \|\neq 0) = 0$;
\item
	for any $\mu \in \mathcal M_E^1$, if $\mathcal N$ is a Poisson random measure on $\mathbb W$ with intensity $\mathbb N_\mu(\cdot):= \int_E \mathbb N_x(\cdot )\mu(dx)$, then the superprocess $\{X;\mathbf P_\mu\}$ can be realized by $\widetilde X_0 := \mu$ and $\widetilde X_t(\cdot) := \mathcal N[W_t(\cdot)],t>0$.
\end{itemize}
We refer to $(\mathbb N_x)_{x\in E}$ as the \emph{Kuznetsov measures} of $X$.
For the existence and further properties of such measures, we refer our readers to \cite{Li2011Measure-valued}.

From  Campbell's formula, see  the proof of \cite[Theorem 2.7]{Kyprianou2014Fluctuations} for example, we have
\begin{align} 
  \label{eq: equation for N measure}
	- \log \mathbf P_\mu [e^{-X_t(f)}]
	= \mathbb N_\mu[ 1-e^{- W_t(f)}],
	\quad \mu \in \mathcal M_E^1, t>0, f\in \mathscr B_b^+(E).
\end{align}
For each $x\in E$ and $t\geq 0$, taking $\mu = \delta_x$ and $f = \lambda \mathbf 1_E$ with $\lambda > 0$ in the above equation, and letting $\lambda \to \infty$, we get
\begin{align} 
  \label{eq: definition of v_t(x)}
	v_t(x)
	:= \lim_{\lambda\to \infty} V_t(\lambda\mathbf 1_E)(x)
	= -\log \mathbf P_{\delta_x} (\|X_t\|=0)
	= \mathbb N_x(\|W_t\|\neq 0).
\end{align}
For each $\mu \in \mathcal M_E^1$ and $t > 0$, by \eqref{eq: equation for N measure}, \eqref{eq: definition of v_t(x)} and the monotone convergence theorem, we have
\begin{align}
  \mathbb N_\mu(\|W_t\|\neq 0)
	&= -\log \mathbf P_{\mu} (\|X_t\|=0)
   = \lim_{\lambda \to \infty} (- \log \mathbf P_\mu [e^{-\lambda X_t(\mathbf 1_E)}])
	\\\label{eq: equation for mu v-t}
	&= \lim_{\lambda \to \infty} \langle \mu, V_t(\lambda \mathbf 1_E)\rangle
   = \mu(v_t).
\end{align}
It is also known that for any $f\in\mathscr B_b^+(E)$,
\begin{align}
\label{eq: mean of kuz measure}
  \mathbb N_{\mu}[W_t(f)]
  =\mathbf P_{\mu}[X_t(f)]=\mu(P^\beta_tf),
  \quad t \geq 0.
\end{align}

\subsection{Spine decompositions}
\label{sec: Spine decompositions}
Let $(\Omega, \mathscr F)$ be a measurable space with a $\sigma$-finite measure $\mu$.
For any $F\in \mathscr F$, we say \emph{$\mu$ can be size-biased by $F$} if $\mu(F< 0) = 0$ and $\mu(F) \in (0,\infty)$.
In this case, we define the \emph{$F$-transform of $\mu$} as the probability $\mu^F$ on $(\Omega, \mathscr F)$ such that
\begin{align}	
	d\mu^F
	= \frac{F}{\mu(F)}d \mu.
\end{align}

Let $\{X;\mathbf P\}$ be a non-persistent superprocess.
Let $\mu \in \mathcal M^1_E$ and $T>0$.
Suppose that $g\in \mathscr B^+(E)$ satisfies that $\mu(P^\beta_Tg) \in (0,\infty)$.
Then, according to \eqref{eq: mean of kuz measure}, $\mathbf P_\mu$ (resp. $\mathbb N_\mu$) can be size-biased by $X_T(g)$ (resp. $W_T(g)$).
Denote by $\mathbf P_\mu^{X_T(g)}$ (resp. $\mathbb N^{W_T(g)}_\mu$) the $X_T(g)$-transform of $\mathbf P_\mu$ (resp. the $W_T(g)$-transform of $\mathbb N_\mu$).
The spine decomposition theorem characterizes the law of $\{(X_t)_{t\geq 0}; \mathbf P_\mu^{X_T(g)}\}$ in two steps.
The first step of the theorem says that $\{(X_t)_{t\geq 0}; \mathbf P_\mu^{X_T(g)}\}$ can be decomposed in law as the sum of two independent measure-valued processes:

\begin{thm}[Size-biased decomposition, \cite{RenSongSun2017Spine}]
  \label{thm: size-biased decomposition}
  \begin{align}
    \{(X_t)_{t\geq 0}; \mathbf P_\mu^{X_T(g)}\}
    \overset{f.d.d.}{=} \{(X_t+W_t)_{t\geq 0}; \mathbf P_\mu  \otimes \mathbb N^{W_T(g)}_\mu\}.
  \end{align}
\end{thm}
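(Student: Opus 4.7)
The plan is to exploit the Poisson point process representation of the superprocess via its Kuznetsov measures, described right after \eqref{eq: definition of v_t(x)}. Realize $\{X;\mathbf P_\mu\}$ as $X_0 = \mu$ and, for $t > 0$, $X_t(\cdot) = \int_{\mathbb W} w_t(\cdot)\,\mathcal N(dw)$, where $\mathcal N$ is a Poisson random measure on the canonical space $\mathbb W$ with intensity $\mathbb N_\mu$. Under this realization, $X_T(g) = \int_{\mathbb W} w_T(g)\,\mathcal N(dw)$, so size-biasing $\mathbf P_\mu$ by $X_T(g)$ is the same as size-biasing the distribution of the Poisson random measure $\mathcal N$ by the linear functional $w \mapsto w_T(g)$. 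Note that $\mathbb N_\mu$ is $\sigma$-finite and $\mathbb N_\mu[W_T(g)] = \mu(P^\beta_T g) \in (0,\infty)$ by \eqref{eq: mean of kuz measure} and the hypothesis, so both $\mathbf P_\mu$ and $\mathbb N_\mu$ can genuinely be size-biased by the prescribed functionals.

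The central tool is the Campbell--Mecke (Palm) formula for Poisson random measures: if $\mathcal N$ is a Poisson random measure on a measurable space $S$ with $\sigma$-finite intensity $\nu$, and $F \in \mathscr B^+(S)$ with $\nu(F) \in (0,\infty)$, then for any bounded measurable functional $H$ on the space of point measures on $S$,
\begin{align}
E\Bigl[\int_S F(x)\,\mathcal N(dx)\cdot H(\mathcal N)\Bigr]
= \int_S F(x)\,E[H(\mathcal N + \delta_x)]\,\nu(dx).
\end{align}
I would apply this on $S = \mathbb W$ with $F(w) = w_T(g)$ and $\nu = \mathbb N_\mu$, taking $H$ of the form $H(\mathcal N) = \Phi(X_{t_1},\dots,X_{t_n})$ for finitely many times $t_1,\dots,t_n \geq 0$ and bounded measurable $\Phi$ on $(\mathcal M^1_E)^n$. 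Dividing both sides by $\nu(F) = \mu(P^\beta_T g)$ then identifies $\mathbf P_\mu^{X_T(g)}$ with the law of $(X_t + W^*_t)_{t\geq 0}$, where $W^*$ has law $\mathbb N_\mu^{W_T(g)}$ and is independent of $X$ sampled from $\mathbf P_\mu$; this is exactly the claimed equality of finite-dimensional distributions.

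Two minor points need to be checked along the way. First, at $t = 0$ the formula is consistent: $\mathbb N_\mu(\|W_0\|\neq 0) = 0$ forces $W^*_0 = 0$ almost surely, so both sides agree with $X_0 = \mu$. Second, one must verify that $(X_t)_{t\geq 0}$ is genuinely a measurable functional of $\mathcal N$ in the sense required by Campbell--Mecke; this follows from the explicit representation $X_t = \int_{\mathbb W} w_t\,\mathcal N(dw)$ together with the Borel structure on $\mathbb W$. The principal obstacle, if any, is therefore not probabilistic but rather the bookkeeping needed to turn the abstract Campbell--Mecke identity into the concrete statement about $f.d.d.$; this is routine once the Poisson cluster realization is in hand.
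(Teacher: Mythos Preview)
The paper does not give its own proof of this theorem; it is quoted from \cite{RenSongSun2017Spine} and used as a black box. So there is nothing in the present paper to compare against.

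That said, your approach is correct and is essentially the standard argument for results of this type. Realizing $\{X;\mathbf P_\mu\}$ as a Poisson superposition of excursions with intensity $\mathbb N_\mu$ and then applying the Slivnyak--Mecke (Palm) formula to the linear functional $w\mapsto w_T(g)$ is exactly how one proves that size-biasing a Poisson cluster process by a sum over clusters adds one extra cluster drawn from the size-biased intensity, independently of the original process. The two checks you flag (the $t=0$ boundary and measurability of $X_t$ as a functional of $\mathcal N$) are the right ones, and neither presents a difficulty here. If anything, one might add that the Mecke identity requires $\sigma$-finiteness of $\mathbb N_\mu$ on $\mathbb W$, which follows since $\mathbb N_\mu(\|W_t\|\neq 0)=\mu(v_t)<\infty$ for each $t>0$ and $\mathbb N_\mu(\forall t>0,\|W_t\|=0)=0$; but you already note this implicitly.
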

The second step of the spine decomposition theorem says that $\{(W_t)_{0\leq t\leq T}; \mathbb N^{W_T(g)}_\mu\}$ has a spine representation, which intuitively says that, under probability $\mathbb N_\mu^{W_T(g)}$, the measure-valued process $(W_t)_{0\leq t\leq T}$ can be decomposed as a measure-valued immigration process along the trajectory of a spine process in a Poissonian way.

More precisely, we say $\{(\xi_t)_{0\leq t\leq T}, \mathbf n_T,  (Y_t)_{ 0\leq t\leq T}; \dot {\mathbf P}^{(g,T)}_\mu\}$ is a \emph{spine representation of $\mathbb N^{W_T(g)}_\mu$}  if:
\begin{itemize}
\item
	The \emph{spine process} $\{(\xi_t)_{0\leq t\leq T}; \dot{\mathbf P}^{(g,T)}_\mu\}$ is a copy of $\{(\xi_t)_{0\leq t\leq T}; \Pi^{(g,T)}_{\mu}\}$, where $\Pi^{(g,T)}_{\mu}$ is the $g(\xi_T) \exp\{\int_0^T \beta(\xi_s)ds\}$-transform of the measure $\Pi_{\mu}(\cdot):=\int_{E}\mu(dx)\Pi_x(\cdot) $;
\item
	Given $\{(\xi_t)_{0\leq t\leq T}; \dot{\mathbf P}^{(g,T)}_\mu\}$, the \emph{immigration measure} $\{\mathbf n_T; \dot{\mathbf P}^{(g,T)}_\mu[\cdot |(\xi_t)_{0\leq t\leq T}]\}$ is a Poisson random measure on $[0,T] \times \mathbb W$ with intensity
  \begin{align}
    \mathbf m^\xi_T(ds,dw)
    := 2 \alpha(\xi_s) ds \cdot \mathbb N_{\xi_s}(dw) + ds \cdot \int_{(0,\infty)} y \mathbf P_{y\delta_{\xi_s}}(X\in dw) \pi(\xi_s,dy);
  \end{align}
\item
	$\{(Y_t)_{0\leq t\leq T}; \dot{\mathbf P}^{(g,T)}_\mu\}$ is an $\mathcal M^1_E$-valued process defined by
  \begin{align}
    Y_t
    := \int_{(0,t] \times \mathbb W} w_{t-s} \mathbf n_T(ds,dw),
    \quad 0 \leq t\leq T.
  \end{align}
\end{itemize}
\begin{thm}[Spine representation, \cite{RenSongSun2017Spine}]\label{thm: spine representation}
	Let $\{(Y_t)_{0\leq t\leq T}; \dot {\mathbf P}^{(g,T)}_\mu\}$ be the spine representation of $\mathbb N^{W_T(g)}_\mu$ defined above.
	Then we have
  \begin{align}
    \{(Y_t)_{0\leq t\leq T}; \dot{\mathbf P}^{(g,T)}_\mu\}
    \overset{f.d.d.}{=} \{(W_t)_{0\leq t\leq T}; \mathbb N_\mu^{W_T(g)}\}.
  \end{align}
\end{thm}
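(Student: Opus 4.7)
The plan is to verify the asserted equality of finite-dimensional distributions by matching multi-point Laplace functionals. Fix $0<t_1<\cdots<t_n<T$ (the boundary case $t_n=T$ follows by continuity) and non-negative $f_1,\dots,f_n\in\mathscr B_b(E)$, and write $F(W):=\exp(-\sum_{i=1}^n W_{t_i}(f_i))$. Using the definitions of $\mathbb N^{W_T(g)}_\mu$ and of $\dot{\mathbf P}^{(g,T)}_\mu$, the identity to prove reduces to
\[
\mathbb N_\mu\!\left[W_T(g)\,F(W)\right]
=\mu(P^\beta_T g)\,\dot{\mathbf P}^{(g,T)}_\mu\!\left[F(Y)\right].
\]
I would establish this by computing both sides in closed form as the same Feynman--Kac integral along $\xi$.

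For the spine side, condition on the spine $\xi$ and use the Poisson structure of $\mathbf n_T$. Campbell's formula gives
\[
\dot{\mathbf P}^{(g,T)}_\mu\!\left[F(Y)\mid \xi\right]
=\exp\!\left(-\!\int_0^{t_n}\!\int_{\mathbb W}\!\bigl(1-e^{-\sum_i \mathbf 1_{s\le t_i}w_{t_i-s}(f_i)}\bigr)\,\mathbf m^\xi_T(ds,dw)\right).
\]
Introduce $u_s(x):=\mathbb N_x[1-\exp(-\sum_{i:t_i\ge s}W_{t_i-s}(f_i))]$, which satisfies \eqref{eq:mean-fkpp} piecewise on $(t_{i-1},t_i]$ with the appropriate jump data $u_{t_i}+f_i$ at the right endpoint. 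The $\mathbb N_{\xi_s}$-piece of the intensity $\mathbf m^\xi_T$ then contributes $2\alpha(\xi_s)u_s(\xi_s)$, while the $\pi$-piece contributes $\int y(1-e^{-y u_s(\xi_s)})\pi(\xi_s,dy)$; together they equal $\psi_0'(\xi_s,u_s(\xi_s))=\psi'(\xi_s,u_s(\xi_s))+\beta(\xi_s)$, where $\psi_0'$ denotes the $z$-derivative of $\psi_0$. Averaging over the tilted spine and cancelling $\mu(P^\beta_T g)$ against the Radon--Nikodym density $g(\xi_T)\exp(\int_0^T\!\beta(\xi_s)ds)$, then applying the Markov property of $\xi$ at $t_n$, yields
\[
\mu(P^\beta_T g)\,\dot{\mathbf P}^{(g,T)}_\mu[F(Y)]
=\Pi_\mu\!\left[P^\beta_{T-t_n}g(\xi_{t_n})\exp\!\left(-\!\int_0^{t_n}\!\psi'(\xi_s,u_s(\xi_s))\,ds\right)\right].
\]

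For the Kuznetsov side, the Markov property of $\mathbb N_\mu$ at $t_n$ combined with \eqref{eq: mean of kuz measure} gives $\mathbb N_\mu[W_T(g)F(W)]=\mathbb N_\mu[F(W)\,W_{t_n}(P^\beta_{T-t_n}g)]$. Writing $h:=P^\beta_{T-t_n}g$ and introducing a perturbation parameter $\theta\ge0$, the identity $\mathbb N_\mu[1-\exp(-\sum_i W_{t_i}(f_i)-\theta W_{t_n}(h))]=\mu(u_0^{(\theta)})$, where $u^{(\theta)}$ concatenates the cumulant equation with an additional $\theta h$ at $t_n$, can be differentiated at $\theta=0$ to obtain $\mathbb N_\mu[F(W)\,W_{t_n}(h)]=\mu(w)$, with $w:=\partial_\theta u^{(\theta)}_0|_{\theta=0}$ solving the linearised equation
\[
w(x)+\int_0^{t_n}\!P^\beta_{t_n-r}\!\left[\psi_0'(\cdot,u_r)\,w\right](x)\,dr=P^\beta_{t_n}h(x).
\]
The Feynman--Kac representation of its unique bounded solution, together with $\psi_0'-\beta=\psi'$, gives
\[
w(x)=\Pi_x\!\left[h(\xi_{t_n})\exp\!\left(-\!\int_0^{t_n}\!\psi'(\xi_s,u_s(\xi_s))\,ds\right)\right],
\]
which coincides, after integration against $\mu$, with the spine-side expression. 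A standard uniqueness-of-Laplace-transforms argument then promotes this to equality of finite-dimensional distributions.

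The main obstacle is the piecewise nature of $u$ and $u^{(\theta)}$ across the jump times $t_1,\dots,t_n$: one must justify that $u^{(\theta)}_s$ is differentiable in $\theta$, that the linearised integral equation admits a unique bounded solution, and that the Feynman--Kac formula holds for the time-dependent potential $\psi_0'(\cdot,u_r)$. These points are addressed by iterating the argument over each subinterval $(t_{i-1},t_i]$ and using boundedness of the $f_i$ together with the intrinsic ultracontractivity of $(P_t)$ (Assumption \ref{asp: 3}) to secure uniform bounds on $u_r$ and hence on $\psi_0'(\cdot,u_r)$. The use of the Markov property of $\mathbb N_\mu$ at a deterministic time, while classical, also requires a brief appeal to the cluster representation of the superprocess under the Kuznetsov measure.
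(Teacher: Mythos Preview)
The paper does not actually prove this theorem: it is quoted from \cite{RenSongSun2017Spine} (the citation is in the theorem header) and no proof is supplied in this paper. So there is no ``paper's own proof'' to compare against.

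That said, your outline is a correct and standard route to such a spine representation: reduce to equality of multi-time Laplace functionals, compute the spine side via Campbell's formula conditional on $\xi$ (yielding the potential $\psi_0'(\cdot,u_s)$ in the exponent), compute the Kuznetsov side by differentiating the multi-time log-Laplace functional in an auxiliary parameter and invoking the Feynman--Kac representation of the linearised equation, and match the two expressions. This is essentially how the result is proved in \cite{RenSongSun2017Spine} and in the earlier spine literature \cite{EckhoffKyprianouWinkel2015Spines, EnglanderKyprianou2004Local}.

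One minor point: your appeal to Assumption~\ref{asp: 3} for the uniform bounds on $u_r$ and $\psi_0'(\cdot,u_r)$ is unnecessary. Since $0\le u_r\le P^\beta_{t_n-r}\bigl(\sum_i f_i\bigr)$ and the $f_i$ are bounded, the boundedness of $u_r$ follows directly from the boundedness of $\beta$; the boundedness of $\psi_0'(\cdot,u_r)$ then follows from the standing hypotheses on $\sigma$ and $\pi$ in~\eqref{eq: branching mechanism}. The spine representation holds without any spectral assumption on $(P_t)$, and indeed \cite{RenSongSun2017Spine} states it in that generality.
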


Notice that $\mathbf P^{X_T(g)}_\mu(X_0 = \mu) = 1$.
Also notice that $\mathbb N_\mu$ is not a probability measure, but after the size-biased transform, $\mathbb N^{W_T(g)}_\mu$ is a probability measure.
Since $\mathbb N_{\mu}(\|W_0\|\neq 0) = 0$, we have $\mathbb N_\mu^{W_T(g)}(\|W_0\|= 0) = 1$.
Similarly, $\Pi_{\mu}$ is not typically a probability measure, but after the size-biased transform, $\Pi_{\mu}^{(T,g)}$ is a probability measure.
We note that
\begin{align}
	\Pi_{\mu}^{(T,g)} [ f(\xi_0) ]
	&= \frac{1}{\mu(P^\beta_Tg)}\Pi_{\mu}\Big[g(\xi_T)
   \exp\Big\{\int_0^T \beta(\xi_s)ds \Big\} f(\xi_0) \Big]
	\\&= \frac{1}{\mu(P^\beta_T g)}
	\int_E (P^\beta_T g)(x) \cdot f(x)\mu(dx),
\end{align}
which says that
\begin{align}
  \label{eq: initial distribution of spine}
	\Pi_{\mu}^{(T,g)} (\xi_0 \in dx)
	= \frac{1}{\mu(P^\beta_T g)} (P^\beta_T g)(x)\mu(dx),
	\quad x\in E.
\end{align}

Now, suppose that $\{\xi; \Pi\}$ satisfies Assumption \ref{asp: 1}.
Recall that $\phi$ is the principal eigenfunction of the mean semigroup of $X$.
The classical spine decomposition theorem, see \cite{EckhoffKyprianouWinkel2015Spines, EnglanderKyprianou2004Local, LiuRenSong2009Llog} for example, considered the case $g = \phi$ only.
In this case, the family of probabilities $(\Pi_{\mu}^{(\phi,T)})_{T\geq 0}$ is consistent in the sense of Kolmogorov's extension theorem, that is, the process $\{(\xi_t)_{0\leq t\leq T}; \Pi_{\mu}^{(\phi,T)} \}$ can be realized as the restriction of some process, say $\{(\xi_t)_{t\geq 0}; \Pi_{\mu}^{(\phi)}\}$,	on the finite time interval $[0,T]$.
In fact, one can also check that this consistency property is satisfied by  $(\mathbf P_\mu^{X_T(\phi)} )_{T\geq 0}$, $(\mathbb N^{W_T(\phi)}_\mu)_{T\geq 0}$ and $(\dot {\mathbf P}^{(\phi,T)}_\mu)_{T\geq 0}$.
Therefore, the actual statement of the classical spine decomposition theorem is different from merely replacing $g$ with $\phi$ in Theorem \ref{thm: size-biased decomposition} and \ref{thm: spine representation}:
There is no need to restrict the corresponding processes on the finite time interval $[0,T]$.
Because of its theoretical importance, we state the classical spine decomposition theorem explicitly here:

\begin{cro}
	For each $\mu \in \mathcal M_E^\phi \cap \mathcal M_E^1$, we have
  \begin{align}
    \{(X_t)_{t\geq 0}; \mathbf P_\mu^{(\phi)}\}
    \overset{f.d.d.}{=} \{(X_t + W_t)_{t\geq 0}; \mathbf P_\mu \otimes \mathbb N^{(\phi)}_\mu\}.
  \end{align}
	Here, the probability $\mathbf P_\mu^{(\phi)}$ is Doob's $h$-transform of $\mathbf P_\mu$ whose restriction on the natural filtration $(\mathscr F_t^X)$ of the process $(X_t)_{t\geq 0}$ is
  \begin{align}
    d ( \mathbf P_\mu^{(\phi)}|_{\mathscr F_t^X})
    = \frac{X_t(\phi)}{ \mu(\phi)} d(\mathbf P_\mu|_{\mathscr F_t^X}),
    \quad t\geq 0;
  \end{align}
	and $\mathbb N_\mu^{(\phi)}$ is a probability measure on $\mathbb W$ whose restriction on the natural filtration $(\mathscr F_t^W)$ of the process $(W_t)_{t\geq 0}$ is
  \begin{align}
    d(\mathbb N_\mu^{(\phi)} |_{\mathscr F^W_t}  )
    = \frac{W_t(\phi)}{\mu(\phi)} d(\mathbb N_\mu |_{\mathscr F^W_t}  ),
    \quad t\geq 0.
  \end{align}
\end{cro}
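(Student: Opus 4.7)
The plan is to apply Theorem \ref{thm: size-biased decomposition} (size-biased decomposition) with $g = \phi$, and then pass to a common measure on the whole time axis by exploiting the consistency in $T$ that is special to the critical eigenfunction. By Assumption \ref{asp: 2}, $P^\beta_t \phi = \phi$, so the mean formulas \eqref{eq:mean_formula} and \eqref{eq: mean of kuz measure} both give $\mathbf P_\mu[X_t(\phi)] = \mathbb N_\mu[W_t(\phi)] = \mu(\phi)$ for every $t \geq 0$, which in particular ensures that the size-biased transforms $\mathbf P_\mu^{X_T(\phi)}$ and $\mathbb N_\mu^{W_T(\phi)}$ appearing in Theorem \ref{thm: size-biased decomposition} are well defined for every $T > 0$.

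The first step is to verify two consistency statements. The Markov property of $X$ under $\mathbf P_\mu$ combined with $\mathbf P_{X_s}[X_t(\phi)] = X_s(P^\beta_{t-s}\phi) = X_s(\phi)$ shows that $(X_t(\phi)/\mu(\phi))_{t\geq 0}$ is a non-negative $\mathbf P_\mu$-martingale. Hence for every $A \in \mathscr F^X_t$ with $t \leq T$,
\begin{align}
  \mathbf P_\mu^{X_T(\phi)}(A)
  = \mathbf P_\mu\Big[\mathbf 1_A\, \frac{X_T(\phi)}{\mu(\phi)}\Big]
  = \mathbf P_\mu\Big[\mathbf 1_A\, \frac{X_t(\phi)}{\mu(\phi)}\Big],
\end{align}
which is exactly the defining Radon--Nikodym derivative of $\mathbf P_\mu^{(\phi)}|_{\mathscr F^X_t}$ given in the statement. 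Thus the restrictions of $\mathbf P_\mu^{X_T(\phi)}$ to $(\mathscr F^X_t)_{t \geq 0}$ are consistent in $T$ and coincide with $\mathbf P_\mu^{(\phi)}$. The analogous statement on the Kuznetsov side --- $\mathbb N_\mu^{W_T(\phi)}|_{\mathscr F^W_t} = \mathbb N_\mu^{(\phi)}|_{\mathscr F^W_t}$ for $t \leq T$ --- follows from the Markov property of $(W_t)$ under $\mathbb N_\mu$ (\cite[Section~8.4]{Li2011Measure-valued}), which yields $\mathbb N_\mu[W_T(\phi)\mid \mathscr F^W_t] = W_t(P^\beta_{T-t}\phi) = W_t(\phi)$.

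With these two consistencies in hand, for any finite collection of times $0 \leq t_1 < \cdots < t_k$ I would pick $T \geq t_k$ and apply Theorem \ref{thm: size-biased decomposition} with $g = \phi$ to obtain
\begin{align}
  \{(X_{t_1},\dots, X_{t_k}); \mathbf P_\mu^{X_T(\phi)}\}
  \overset{f.d.d.}{=} \{(X_{t_1}+W_{t_1},\dots, X_{t_k}+W_{t_k}); \mathbf P_\mu \otimes \mathbb N_\mu^{W_T(\phi)}\}.
\end{align}
By the previous paragraph, the left side is the joint law under $\mathbf P_\mu^{(\phi)}$ and the right side is the joint law under $\mathbf P_\mu \otimes \mathbb N_\mu^{(\phi)}$, which proves the corollary. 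The only mildly subtle point is justifying the martingale identity $\mathbb N_\mu[W_T(\phi)\mid \mathscr F^W_t] = W_t(\phi)$ under the $\sigma$-finite measure $\mathbb N_\mu$; this is however standard once one restricts to the event $\{\|W_t\|\neq 0\}$, which has finite $\mathbb N_\mu$-mass by \eqref{eq: definition of v_t(x)} and carries all of the mass of $W_T(\phi)$ for $T \geq t$, so that the conditional expectation is unambiguously defined and computed via the branching property built into the Kuznetsov measure.
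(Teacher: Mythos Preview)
Your argument is correct and matches the paper's approach exactly: the paper does not give a separate proof of this corollary, but explains in the paragraph preceding it that when $g=\phi$ the families $(\mathbf P_\mu^{X_T(\phi)})_{T\geq 0}$ and $(\mathbb N_\mu^{W_T(\phi)})_{T\geq 0}$ are consistent in the sense of Kolmogorov's extension theorem, so that Theorem~\ref{thm: size-biased decomposition} applied with $g=\phi$ yields the statement without the restriction to $[0,T]$. Your proposal simply spells out the martingale computation behind that consistency, which the paper leaves to the reader (and to the cited references \cite{EckhoffKyprianouWinkel2015Spines, EnglanderKyprianou2004Local, LiuRenSong2009Llog}).
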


Let $\mu \in \mathcal M^{(\phi)}_\mu$, we say $\{(\xi_t)_{t\geq 0}, \mathbf n, (Y_t)_{ t\geq 0}; \dot {\mathbf P}^{(\phi)}_\mu\}$ is a \emph{spine representation of $\mathbb N^{(\phi)}_\mu$} if:
\begin{itemize}
\item
	The \emph{spine process} $\{(\xi_t)_{t\geq 0}; \dot{\mathbf P}^{(\phi)}_\mu\}$ is a copy of $\{(\xi_t)_{t\geq 0}; \Pi^{(\phi)}_{\mu}\}$ where the probability $\Pi_{\mu}^{(\phi)}$ is Doob's $h$-transform of $\Pi_\mu$ whose restriction on the natural filtration $(\mathscr F_t^\xi)$ of the process $(\xi_t)_{t\geq 0}$ is
  \begin{align}
    d(\Pi_{\mu}^{(\phi)} |_{\mathscr F_t^\xi})
    = \frac{\phi(\xi_t)e^{\int_0^t \beta(\xi_s)ds}}{\mu(\phi)} d(\Pi_{\mu} |_{\mathscr F_t^\xi}),
    \quad t\geq 0;
  \end{align}
\item
	Conditioned on $\{(\xi_t)_{t\geq 0}; \dot{\mathbf P}^{(\phi)}_\mu\}$, \emph{the immigration measure} $\{\mathbf n; \dot{\mathbf P}^{(\phi)}_\mu[\cdot |(\xi_t)_{t\geq 0}]\}$ is a Poisson random measure on $[0,\infty ) \times \mathbb W$ with intensity
  \begin{align}
    \label{eq:meanMeasImmigr}
    \mathbf m^\xi(ds,dw)
    := 2 \alpha(\xi_s) ds \cdot \mathbb N_{\xi_s}(dw) + ds \cdot \int_{(0,\infty)} y \mathbf P_{y\delta_{\xi_s}}(X\in dw) \pi(\xi_s,dy);
  \end{align}
\item
	$\{(Y_t)_{t\geq 0}; \dot{\mathbf P}^{(\phi)}_\mu\}$ is an $\mathcal M^1_E$-valued process defined by
  \begin{align}
    \label{eq:defSpinImmigr}
    Y_t
    := \int_{(0,t] \times \mathbb W} w_{t-s} \mathbf n(ds,dw),
    \quad t\geq 0.
  \end{align}
\end{itemize}

\begin{cro}
	Let $\{(Y_t)_{t\geq 0}; \dot {\mathbf P}^{(\phi)}_\mu\}$ be the spine representation of $\mathbb N^{(\phi)}_\mu$ defined above.
	Then we have
  \begin{align}
    \{(Y_t)_{t\geq 0}; \dot{\mathbf P}^{(\phi)}_\mu\}
    \overset{f.d.d.}{=} \{(W_t)_{t\geq 0}; \mathbb N_\mu^{(\phi)}\}.
  \end{align}
\end{cro}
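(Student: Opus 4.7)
The plan is to deduce this infinite-horizon spine representation from the finite-horizon version (Theorem \ref{thm: spine representation}) by taking an inverse limit. Since the claim concerns only finite-dimensional distributions, it suffices to fix an arbitrary $T>0$ and show that the restrictions of $\{(Y_t)_{0\leq t\leq T}; \dot{\mathbf P}_\mu^{(\phi)}\}$ and $\{(W_t)_{0\leq t\leq T}; \mathbb N_\mu^{(\phi)}\}$ agree. The single structural fact that enables the reduction is $P^\beta_t\phi=\phi$ (equivalently, $\lambda=0$), which turns all the relevant Radon--Nikodym densities into genuine martingales and gives a consistent inverse system as $T$ varies.

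First I would verify that the restriction of $\mathbb N_\mu^{(\phi)}$ to $\mathscr F_T^W$ coincides with $\mathbb N_\mu^{W_T(\phi)}$. From the mean formula \eqref{eq: mean of kuz measure} we have $\mathbb N_\mu[W_T(\phi)]=\mu(P^\beta_T\phi)=\mu(\phi)$, so by definition
\begin{align}
d\big(\mathbb N_\mu^{W_T(\phi)}\big)=\frac{W_T(\phi)}{\mu(\phi)}\,d\mathbb N_\mu,
\end{align}
which matches the defining density of $\mathbb N_\mu^{(\phi)}|_{\mathscr F_T^W}$ verbatim. An entirely parallel argument for the spine, using that $\phi(\xi_t)e^{\int_0^t\beta(\xi_s)ds}/\mu(\phi)$ is a $\Pi_\mu$-martingale (again because $P^\beta_t\phi=\phi$), shows that $\Pi_\mu^{(\phi)}|_{\mathscr F_T^\xi}=\Pi_\mu^{(\phi,T)}$.

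Next, conditionally on the spine $(\xi_t)_{t\geq 0}$ the immigration measure $\mathbf n$ is Poisson on $[0,\infty)\times\mathbb W$ with intensity $\mathbf m^\xi$, so its restriction to $[0,T]\times\mathbb W$ is automatically Poisson with intensity $\mathbf m^\xi_T$. Combined with the previous paragraph, the joint law of $\big((\xi_t)_{0\leq t\leq T},\,\mathbf n|_{[0,T]\times\mathbb W}\big)$ under $\dot{\mathbf P}_\mu^{(\phi)}$ coincides with that under $\dot{\mathbf P}_\mu^{(\phi,T)}$; in particular the restriction of $(Y_t)_{t\geq 0}$ to $[0,T]$ has the same law under the two measures. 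Applying Theorem \ref{thm: spine representation} with $g=\phi$ then gives equality of the laws of $(Y_t)_{0\leq t\leq T}$ under $\dot{\mathbf P}_\mu^{(\phi)}$ and $(W_t)_{0\leq t\leq T}$ under $\mathbb N_\mu^{(\phi)}$, and since $T$ is arbitrary the f.d.d.\ statement on $[0,\infty)$ follows. I anticipate no genuine obstacle here: the only substantive input is the eigenfunction identity $P^\beta_t\phi=\phi$, and everything else is bookkeeping on consistent families of measures.
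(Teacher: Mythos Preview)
Your proposal is correct and is exactly the consistency argument the paper alludes to just before the corollary (``one can also check that this consistency property is satisfied by $(\mathbf P_\mu^{X_T(\phi)})_{T\geq 0}$, $(\mathbb N_\mu^{W_T(\phi)})_{T\geq 0}$ and $(\dot{\mathbf P}_\mu^{(\phi,T)})_{T\geq 0}$''); the paper gives no further proof. Your identification of $P_t^\beta\phi=\phi$ (from Assumption~\ref{asp: 2}) as the key input is precisely right, since the paper writes the Radon--Nikodym densities without the $e^{-\lambda t}$ factor and so implicitly relies on criticality.
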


For the sake of generality, the spine decomposition theorems above are all stated with respect to a general initial configuration $\mu$.
If $\mu = \delta_x$ for some $x\in E$, then by \eqref{eq: initial distribution of spine}, we have $\Pi_{\delta_x}^{(T,g)} (\xi_0 = x) = 1$, so sometimes we write $\Pi_x^{(T,g)}$ for $\Pi_{\delta_x}^{(T,g)}$.
Similarly, we write $\Pi_x^{(\phi)}$ for $\Pi_{\delta_x}^{(\phi)}$.

\subsection{Ergodicity of the spine process}
\label{sec: Ergodicity}
In this subsection, we discuss the ergodicity of the spine process $\{(\xi_t)_{t\geq 0}; (\Pi^{(\phi)}_x)_{x\in E}\}$ under Assumptions \ref{asp: 1}--\ref{asp: 3}.
According to \cite{KimSong2008Intrinsic}, $\{\xi; \Pi^{(\phi)}_x\}$ is a time homogeneous Hunt process and its transition density with respect to the measure $m$ is
\begin{align}
	q_t(x,y) := \frac{\phi(y)}{\phi(x)} p^\beta_t(x,y),
	\quad x,y\in E, t>0.
\end{align}
Let $c_0>0$ and $c_1>0$ be the constants  in \eqref{eq:q(t,x,y)}, then we have
\begin{align}
  \label{eq: asymptotic for q_t(x,y)}
	\sup_{x\in E} \Big| \frac{q_t(x,y)}{\phi(y)\phi^*(y)} - 1\Big|
	\leq c_0 e^{-c_1 t},
	\quad t > 1.
\end{align}
This implies that the process $\{\xi; \Pi^{(\phi)}_x\}$ is ergodic.
One can easily get from \eqref{eq: asymptotic for q_t(x,y)} that $(\phi\phi^*)(x)m(dx)$ is the unique invariant probability measure of $\{\xi; \Pi^{(\phi)}_x\}$.
The following two lemmas are also simple consequences of \eqref{eq: asymptotic for q_t(x,y)}.
They will be needed in the proof of  Theorem \ref{thm: main theorem}(3).
\begin{lem}[{\cite[Lemma 5.1]{RenSongSun2017Spine}}]
  \label{lem: ergodicity of the underlying process}
  If $F\in \mathscr B_b(E\times [0,1]\times [0,\infty)$ is such that $F(y,u):= \lim_{t\to \infty} F(y,u,t)$ exists for each $y\in E$ and $u \in [0,1]$, then
  \begin{align}
    \int_0^1 F(\xi_{(1-u)t},u,t) du
    \xrightarrow[t\to \infty]{ L^2(\Pi_x^{(\phi)})}
    \int_0^1 \langle F(\cdot , u), \phi\phi^*\rangle_m du,
    \quad x\in E.
  \end{align}
\end{lem}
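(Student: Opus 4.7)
The plan is to expand the $L^2(\Pi_x^{(\phi)})$-norm squared of the difference as a double integral over $[0,1]^2$ and reduce everything to pointwise decay of a two-time spine correlation. Set $D_t(u) := F(\xi_{(1-u)t}, u, t) - \langle F(\cdot, u), \phi\phi^*\rangle_m$, which satisfies $|D_t(u)| \leq 2\|F\|_\infty$. By Fubini,
\[
\Pi_x^{(\phi)}\Big[\Big(\int_0^1 D_t(u)\,du\Big)^2\Big]
= \int_0^1\!\!\int_0^1 \Pi_x^{(\phi)}[D_t(u) D_t(v)]\,du\,dv,
\]
with integrand bounded by $4\|F\|_\infty^2$ uniformly in $t$. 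Dominated convergence thus reduces the claim to proving $\Pi_x^{(\phi)}[D_t(u) D_t(v)] \to 0$ for Lebesgue-a.e. $(u,v)$.

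By symmetry it suffices to consider $0 \leq u < v < 1$; set $s := (1-v)t$ and $r := (1-u)t$, so $r-s = (v-u)t$, and both $s$ and $r-s$ diverge as $t \to \infty$. Conditioning on $\mathscr F_s^\xi$ and using the Markov property of $\{\xi; \Pi^{(\phi)}\}$ gives
\[
\Pi_x^{(\phi)}[F(\xi_r, u, t) F(\xi_s, v, t)]
= \Pi_x^{(\phi)}\Big[F(\xi_s, v, t) \int_E F(z, u, t)\, q_{r-s}(\xi_s, z)\, m(dz)\Big].
\]
By the uniform ergodicity bound \eqref{eq: asymptotic for q_t(x,y)} together with $\langle \phi, \phi^*\rangle_m = 1$, once $(v-u)t > 1$,
\[
\sup_{y\in E}\Big|\int_E F(z, u, t)\, q_{r-s}(y, z)\, m(dz) - \int_E F(z, u, t)\phi(z)\phi^*(z)\,m(dz)\Big| \leq c_0\|F\|_\infty e^{-c_1(v-u)t}.
\]
Combining this with $\int_E F(\cdot, u, t)\phi\phi^*\,dm \to \langle F(\cdot, u), \phi\phi^*\rangle_m$ (from $F(\cdot, u, t) \to F(\cdot, u)$, boundedness and dominated convergence), and the analogous identification $\Pi_x^{(\phi)}[F(\xi_s, v, t)] \to \langle F(\cdot, v), \phi\phi^*\rangle_m$ that follows from the same bound applied at time scale $s$, one obtains
\[
\Pi_x^{(\phi)}[F(\xi_r, u, t) F(\xi_s, v, t)] \longrightarrow \langle F(\cdot, u), \phi\phi^*\rangle_m \cdot \langle F(\cdot, v), \phi\phi^*\rangle_m.
\]
Substituting these limits back into the four-term bilinear expansion of $\Pi_x^{(\phi)}[D_t(u) D_t(v)]$ makes the terms cancel, giving the desired $\Pi_x^{(\phi)}[D_t(u) D_t(v)] \to 0$.

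The main obstacle is bookkeeping the two successive applications of the ergodicity estimate — first at the separation scale $(v-u)t$ to decouple $\xi_r$ from $\xi_s$ with an error uniform in $\xi_s$, then at scale $(1-v)t$ to dispose of the remaining one-point expectation — and verifying that both scales diverge on a set of full $(u,v)$-measure, namely $\{u < v < 1\}$ together with its reflection, which is exactly what the outer application of dominated convergence requires.
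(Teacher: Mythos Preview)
The paper does not prove this lemma; it is quoted as \cite[Lemma~5.1]{RenSongSun2017Spine} and used as a black box in Lemma~\ref{lem: Fatou-ergodic lemma for the uderlying process} and in Steps~4--6 of the proof of Theorem~\ref{thm: main theorem}(3). So there is no in-paper proof to compare against.

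Your argument is the standard one and it is correct: expand the squared $L^2$-distance as a double integral, then for fixed $u<v<1$ use the Markov property of $\{\xi;\Pi^{(\phi)}\}$ together with the uniform exponential mixing estimate \eqref{eq: asymptotic for q_t(x,y)} at separation $(v-u)t$ to decouple $\xi_r$ from $\xi_s$, apply the same estimate at scale $s=(1-v)t$ to handle the remaining one-point expectation, and finish with dominated convergence over $(u,v)$. One small bookkeeping point: the four-term expansion of $\Pi_x^{(\phi)}[D_t(u)D_t(v)]$ also needs the one-point limit $\Pi_x^{(\phi)}[F(\xi_r,u,t)]\to\langle F(\cdot,u),\phi\phi^*\rangle_m$ at the larger time $r=(1-u)t$, which you do not mention explicitly; but since $r>s\to\infty$ on $\{u<v<1\}$ this is automatic from the same estimate.
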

\begin{lem}
  \label{lem: Fatou-ergodic lemma for the uderlying process}
  Let $F\in \mathscr B^+_b(E\times [0,1]\times [0,\infty))$.
	Define $F(y,u):= \limsup_{t\to \infty} F(y,u,t)$ for each $y\in E$ and $u \in [0,1]$.
	Then, for each $x\in E$ and $p \geq 1$,
  \begin{align}
    \limsup_{ t \to \infty}  \Big\| \int_0^1 F(\xi_{(1-u) t },u,t) du  \Big\|_{\Pi_x^{(\phi)};L^p}
    \leq \int_0^1 \langle F(\cdot, u), \phi \phi^*\rangle_m du,
    \quad x\in E.
  \end{align}
\end{lem}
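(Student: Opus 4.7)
The plan is to reduce to Lemma \ref{lem: ergodicity of the underlying process} via a standard truncate-and-dominate argument. For each $s>0$, introduce the majorant
\[
  G_s(y,u) := \sup_{t \geq s} F(y,u,t), \qquad y \in E,\ u \in [0,1],
\]
which is bounded by $\|F\|_\infty$, decreases in $s$, and satisfies $G_s(y,u) \downarrow \limsup_{t\to\infty} F(y,u,t) = F(y,u)$ as $s\to\infty$. Since $G_s$ does not depend on $t$, it fits the hypothesis of Lemma \ref{lem: ergodicity of the underlying process} trivially (view it as $(y,u,t)\mapsto G_s(y,u)$, whose pointwise $t$-limit is $G_s(y,u)$), and that lemma yields
\[
  \int_0^1 G_s(\xi_{(1-u)t},u)\,du \xrightarrow[t\to\infty]{L^2(\Pi_x^{(\phi)})} \int_0^1 \langle G_s(\cdot,u), \phi\phi^*\rangle_m\,du.
\]
Because both sides are bounded by $\|F\|_\infty$, this $L^2$-convergence automatically upgrades to $L^p$-convergence for every $p \in [1,\infty)$: apply Jensen for $p\leq 2$ and the crude bound $|x-y|^p \leq (2\|F\|_\infty)^{p-2}|x-y|^2$ for $p\geq 2$.

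With this in place the conclusion is quick. For every $t \geq s$ and $u \in [0,1]$ the pointwise domination $F(\xi_{(1-u)t},u,t) \leq G_s(\xi_{(1-u)t},u)$ holds, so integrating in $u$ and taking the $L^p(\Pi_x^{(\phi)})$-norm,
\[
  \Big\| \int_0^1 F(\xi_{(1-u)t},u,t)\,du \Big\|_{\Pi_x^{(\phi)};L^p}
  \leq \Big\| \int_0^1 G_s(\xi_{(1-u)t},u)\,du \Big\|_{\Pi_x^{(\phi)};L^p}.
\]
Sending $t\to\infty$ and invoking the $L^p$-convergence above gives
\[
  \limsup_{t\to\infty} \Big\| \int_0^1 F(\xi_{(1-u)t},u,t)\,du \Big\|_{\Pi_x^{(\phi)};L^p}
  \leq \int_0^1 \langle G_s(\cdot,u), \phi\phi^*\rangle_m\,du.
\]
Finally letting $s \to \infty$, dominated convergence (with dominating constant $\|F\|_\infty$ against the probability measure $\phi\phi^*\,m$) sends the right-hand side to $\int_0^1 \langle F(\cdot,u), \phi\phi^*\rangle_m\,du$, which is the stated bound.

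The main obstacle to flag is the Borel measurability of $G_s$: a supremum over the uncountable family $\{t \geq s\}$ does not automatically preserve measurability. For the $F$ that will arise in applications there is enough regularity in $t$ (typically right-continuity or monotonicity coming from the construction) that the supremum may be restricted to a countable dense subset of $[s,\infty)$, after which $G_s$ is manifestly Borel; this reduction is the only nontrivial check needed when writing out the full proof.
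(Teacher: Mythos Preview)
Your argument is correct and essentially the same as the paper's. The paper defines $\bar F(y,u,t):=\sup_{s\ge t}F(y,u,s)$ (your $G_t$), applies Lemma~\ref{lem: ergodicity of the underlying process} directly to this $t$-dependent function (since $\bar F(y,u,t)\to F(y,u)$ as $t\to\infty$), upgrades $L^2$ to $L^p$ via boundedness, and concludes from $F\le\bar F$; this avoids your final $s\to\infty$ step but is otherwise identical. The measurability issue you flag is real and the paper glosses over it with the bare assertion ``$\bar F\in\mathscr B_b$'', so your caution there is well placed.
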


\begin{proof}
	For each $(y,u,t)\in E\times [0,1]\times [0,\infty)$, define $\bar F(y,u,t) := \sup_{s:s\geq t} F(y,u,s)$.
  Then $\bar F\in \mathscr B_b(E\times [0,1]\times [0,\infty))$ and
  \begin{align}
    F(x,u)
    = \lim_{t\to \infty} \bar F(x,u,t),
    \quad x\in E, u\in [0,1].
  \end{align}
  From Lemma \ref{lem: ergodicity of the underlying process}, we know that
  \begin{align}
    \int_0^1 \bar F(\xi_{(1-u)t},u,t) du
    \xrightarrow[t\to \infty]{L^2(\Pi_x^{(\phi)})}
    \int_0^1 \langle F(\cdot , u), \phi\phi^*\rangle_m du,
    \quad x\in E,
  \end{align}
  which implies convergence in probability.
  The bounded convergence theorem then gives that, for each $p \geq 1$,
  \begin{align}
    \int_0^1 \bar F(\xi_{(1-u)t},u,t) du
    \xrightarrow[t\to \infty]{L^p(\Pi_x^{(\phi)})}
    \int_0^1 \langle F(\cdot , u), \phi\phi^*\rangle_m du,
    \quad x\in E.
  \end{align}
  Finally, noting that $0\leq F \leq \bar F$, we get
  \begin{align}
    \limsup_{ t \to \infty}  \Big\| \int_0^1 F(\xi_{(1-u) t },u,t) du  \Big\|_{\Pi_x^{(\phi)};L^p}
    &\leq \limsup_{ t \to \infty}  \Big\| \int_0^1 \bar F(\xi_{(1-u) t },u,t) du  \Big\|_{\Pi_x^{(\phi)};L^p}
    \\& = \int_0^1 \langle F(\cdot, u), \phi \phi^*\rangle_m du,
    \quad x\in E. \qedhere
  \end{align}
\end{proof}

\section{Proofs}
\subsection{Proof of Theorem \ref{thm: main theorem}(1)}
\label{sec: proof of result 1}
Let $\{X; \mathbf P\}$ be a $(\xi, \psi)$-superprocess satisfying Assumptions \ref{asp: 1}--\ref{asp: 4}.
In this subsection, we will prove the following result stronger than non-persistency:

\begin{prop} 
  \label{prop: non-presistent}
	For each $t > 0$, $\inf_{x\in E} \mathbf P_{\delta_x}(\|X_t\|= 0) > 0$.
\end{prop}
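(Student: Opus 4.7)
Via \eqref{eq: definition of v_t(x)}, the claim is equivalent to $\sup_{x\in E} v_t(x) < \infty$ for each $t > 0$, where $v_t(x) = \lim_{\lambda \uparrow \infty} v^\lambda_t(x)$ and $v^\lambda_t := V_t(\lambda \mathbf{1}_E)$ is increasing in $\lambda$. My strategy is a pointwise comparison of $v^\lambda_t(x)$ with the Laplace exponent of a scalar CSBP chosen with ``worst-case'' spatial parameters; the assumption $\gamma_0 > 1$ from Assumption \ref{asp: 4} will guarantee Grey's condition for that CSBP and hence the uniform bound as $\lambda \uparrow \infty$.

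Specialising \eqref{eq:mean-fkpp} to $f = \lambda \mathbf{1}_E$ and $\psi_0(x,z) = \kappa(x) z^{\gamma(x)}$ yields the Volterra identity
\begin{align}
v^\lambda_t(x) + \int_0^t P^\beta_{t-s}\bigl(\kappa(v^\lambda_s)^{\gamma(\cdot)}\bigr)(x)\,ds = \lambda P^\beta_t \mathbf{1}_E(x) \le \lambda e^{\|\beta\|_\infty t},
\end{align}
where the final bound uses $p^\beta_t \le e^{\|\beta\|_\infty t} p_t$ and $\int p_t(x,y) m(dy) \le 1$ from Assumption \ref{asp: 1}. In parallel, consider the scalar ODE $\dot u^\lambda = \|\beta\|_\infty u^\lambda - \kappa_0 (u^\lambda)^{\gamma_0}$ with $u^\lambda(0) = \lambda$, i.e.\ the Laplace exponent of the CSBP with mechanism $\psi^\sharp(z) = -\|\beta\|_\infty z + \kappa_0 z^{\gamma_0}$. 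The substitution $w = u^{1-\gamma_0}$ linearises this, and integration shows that $u^\lambda(t)$ increases, as $\lambda \uparrow \infty$, to the finite limit
\begin{align}
\bar u(t) = \bigl[\tfrac{\kappa_0}{\|\beta\|_\infty}\bigl(1-e^{-(\gamma_0-1)\|\beta\|_\infty t}\bigr)\bigr]^{-\frac{1}{\gamma_0-1}} < \infty
\end{align}
for each $t > 0$ (replaced by $((\gamma_0-1)\kappa_0 t)^{-1/(\gamma_0-1)}$ when $\|\beta\|_\infty = 0$), which is Grey's condition in action.

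The remaining task is to prove $v^\lambda_t(x) \le u^\lambda(t) + 1$ uniformly in $(x,\lambda)$; then letting $\lambda \uparrow \infty$ gives $\sup_x v_t(x) \le \bar u(t) + 1 < \infty$, as required. The ``$+1$'' cushion is forced by the fact that the mechanism inequality $\kappa(x) z^{\gamma(x)} \ge \kappa_0 z^{\gamma_0}$ (which combines $\kappa(x) \ge \kappa_0$ with the monotonicity $z^{\gamma(x)} \ge z^{\gamma_0}$ valid for $z \ge 1$) only holds in the regime $z \ge 1$, while in the region where $v^\lambda_t \le 1$ the estimate $v^\lambda_t \le u^\lambda + 1$ is trivial. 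Writing $w_t^\lambda := v^\lambda_t - (u^\lambda(t) + 1)$, subtracting the scalar Volterra identity satisfied by $u^\lambda + 1$ from that satisfied by $v^\lambda$, and splitting according to whether $v^\lambda_s(\cdot) \ge 1$ or not, one can close the comparison by a standard monotone-iteration argument; continuity of $v^\lambda_t(\cdot)$ in $x$ (which follows from the continuity of $p^\beta_t(\cdot,\cdot)$ granted in Assumption \ref{asp: 1}) makes all the pointwise manipulations well-defined.

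\emph{Main obstacle.} The delicate point is making the last comparison rigorous in the spatial setting, because the kernel $P^\beta_{t-s}$ couples time and space and $\sup_x$ does not commute with it in the direction one needs to reduce to the scalar ODE. I plan to sidestep this by a space--time Picard iteration on the monotone operator $F \mapsto \lambda P^\beta_t \mathbf{1}_E - \int_0^t P^\beta_{t-s}(\kappa F(\cdot,s)^{\gamma(\cdot)})\,ds$: the iterates $v^{\lambda,(n)}_t$ increase monotonically to $v^\lambda_t$, and at each step the bound $v^{\lambda,(n)}_s \le u^\lambda(s) + 1$ can be propagated by splitting the spatial region $\{v^{\lambda,(n)}_s \ge 1\}$ (where the CSBP mechanism dominates) from its complement (where the bound is automatic). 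Note that Assumption \ref{asp: 3} plays no role in this proposition — only Grey's condition $\gamma_0 > 1$ from Assumption \ref{asp: 4} is needed.
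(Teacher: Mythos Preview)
Your strategy---compare with a spatially constant CSBP satisfying Grey's condition---is exactly the paper's, but you have made the execution much harder than necessary and left a genuine gap in the process.

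The paper's comparison mechanism is $\widehat\psi(z) = -(\|\beta\|_\infty + \kappa_0)z + \kappa_0 z^{\gamma_0}$, not your $\psi^\sharp(z) = -\|\beta\|_\infty z + \kappa_0 z^{\gamma_0}$. The extra $-\kappa_0 z$ in the linear part is the whole trick: for $0\le z\le 1$ one has $\kappa_0 z^{\gamma_0}\le \kappa_0 z$, so $\widehat\psi(z)\le -\|\beta\|_\infty z\le \psi(x,z)$; for $z\ge 1$ the inequality $\kappa_0 z^{\gamma_0}\le \kappa(x)z^{\gamma(x)}$ (after the $m$-null-set modification $\kappa\to\widetilde\kappa$, $\gamma\to\widetilde\gamma$) plus $-(\|\beta\|_\infty+\kappa_0)z\le -\beta(x)z$ again gives $\widehat\psi(z)\le\psi(x,z)$. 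Thus $\inf_{x}\psi(x,z)\ge\widehat\psi(z)$ for \emph{all} $z\ge 0$, Grey's condition $\int_1^\infty \widehat\psi(z)^{-1}dz<\infty$ still holds, and the paper can simply invoke the comparison lemma \cite[Lemma 2.3]{RenSongZhang2015Limit}. No ``$+1$ cushion'', no case-splitting, no iteration.

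Your attempt to patch the missing region $\{z<1\}$ via Picard iteration has a concrete error: the map $F\mapsto \lambda P^\beta_t\mathbf 1_E - \int_0^t P^\beta_{t-s}(\kappa F(s,\cdot)^{\gamma(\cdot)})\,ds$ is \emph{anti}-monotone in $F$, so the iterates do not increase to $v^\lambda$ as you claim---they oscillate. One can of course set up a correct monotone scheme (e.g.\ iterate the equation written against the untransformed semigroup $(P_t)$ with the full $\psi$, or argue via sub/supersolutions), but this is unnecessary once you adopt the $-\kappa_0 z$ shift in the comparison mechanism.
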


\begin{proof}
  Recall that $\kappa_0 = \operatorname{ess\,inf}_{m(dx)} \kappa(x) $ and $\gamma_0 = \operatorname{ess\,inf}_{m(dx)} \gamma(x)$.
	For each $x\in E$, let $\widetilde \kappa(x) := \kappa(x) \mathbf 1_{\kappa(x)\geq \kappa_0} + \kappa_0 \mathbf 1_{\kappa(x) < \kappa_0}$ and $\widetilde \gamma(x) := \gamma(x) \mathbf 1_{\gamma(x)\geq \gamma_0} + \gamma_0 \mathbf 1_{\gamma(x) < \gamma_0}$.
  Then, we know that $m(\widetilde \kappa \neq \kappa) = 0$ and $m(\widetilde \gamma \neq \gamma) = 0$.
	Define $\widetilde \psi(x,z) := - \beta(x)z+ \widetilde \kappa(x)z^{\widetilde \gamma(x)}$ for each $x\in E$ and $z\geq 0$, then for each $z\geq 0$, $\widetilde \psi(\cdot, z) = \psi(\cdot , z),$ $m$-almost everywhere.
  
	If we replace $\psi$ with $\widetilde\psi$ in \eqref{eq:FKPP_in_definition}, the solution $V_tf(x)$ of equation \eqref{eq:FKPP_in_definition} is also the solution of
  \begin{align}
    V_t f(x) + \Pi_x \Big[  \int_0^{t\wedge \zeta} \widetilde \psi (\xi_s,V_{t-s} f) ds \Big]
    =\Pi_x \big[ f(\xi_t)\mathbf 1_{t<\zeta} \big].
  \end{align}
  So, we can consider $\{X; \mathbf P\}$ as a superprocess with branching mechanism $\widetilde \psi$.
  Define
  \begin{align}
    \widehat\psi(z)
    := - (\|\beta\|_\infty +\kappa_0 )z + \kappa_0 z^{\gamma_0},
    \quad z\geq 0.
  \end{align}
  Using the fact that $\gamma_0 > 1$ and $\kappa_0 > 0$, it is easy to verify that
  \begin{align}
    \inf_{x\in E}\widetilde \psi(x,z)
    \geq \widehat\psi(z),
    \quad z\geq 0;
    \quad \int_1^\infty \frac{1}{\widehat\psi(z)} dz
    < \infty;
    \quad \widehat \psi(+\infty) = +\infty.
  \end{align}
  Therefore $\widetilde \psi$ satisfies the condition of \cite[Lemma 2.3]{RenSongZhang2015Limit}.
  As a consequence, we have the desired result.
\end{proof}

\subsection{Proof of Theorem \ref{thm: main theorem}(2)}
\label{sec: proof of result 2}
\begin{proof}
  [Proof of Theorem \ref{thm: main theorem}(2)]
  Let $\{X; \mathbf P\}$ be a $(\xi, \psi)$-superprocess satisfying
  Assumptions \ref{asp: 1}--\ref{asp: 4}.
  From Proposition \ref{prop: non-presistent}, we know that 
  \begin{align}
    \label{eq:extinction_probability_is_not_zero}
    \inf_{x\in E} \mathbf P_{\delta_x}(\|X_t\| = 0)
    > 0,
  \end{align}
  which implies that $\{X; \mathbf P\}$ is non-persistent.
  According to \eqref{eq:mean_formula}, Assumption \ref{asp: 2} and the fact that $\phi$ is the principal eigenfunction of the semigroup $(P_t^\beta)_{t\geq 0}$, we have $\mathbf P_{\delta_x}[X_t(\phi)] = P_t^\beta \phi(x) = e^{\lambda t} \phi(x)= \phi(x)>0$.
  Therefore,
  \begin{align}
    \label{eq:extinction_probability_is_not_one}
    \mathbf P_{\delta_x}(\|X_t\|= 0)<1,
    \quad t>0, x \in E.
  \end{align}
  From \eqref{eq:extinction_probability_is_not_zero}, \eqref{eq:extinction_probability_is_not_one} and \eqref{eq: definition of v_t(x)}, we have that $v_t \in \mathscr B^{++}_b(E)$ for each $t > 0$.
  
  According to \eqref{eq: definition of v_t(x)} and \eqref{eq:mean-fkpp}, by monotonicity, we see that $(v_t)_{t > 0}$ satisfies the equation
  \begin{align}
    v_{s+t}(x) + \int_0^t P^\beta_{t-r} \psi_0(x,v_{s+r}) dr
    = P^\beta_t v_s(x)
    \in [0,\infty),
    \quad s>0, t \geq 0,x \in E.
  \end{align}
  Notice that, under Assumption \ref{asp: 1}, according to \eqref{eq: p-t-beta is comparable to phi phi-star}, $d\nu:= \phi^* dm$ defines a finite measure on $E$.
  Therefore, $\langle v_t, \phi^*\rangle_m < \infty$ for each $t>0$.
  According to \eqref{eq:langleVtfphiranglem_equation}, \eqref{eq: definition of v_t(x)} and the monotone convergence theorem, $(v_t)_{t> 0}$ also satisfies the equation
  \begin{align}
    \label{eq: equation of <vt,phi>}
    \langle v_t,\phi^*\rangle_m + \int_s^t \langle \psi_0(\cdot ,v_t) , \phi^*\rangle_m dr
    = \langle v_s,\phi^*\rangle_m
    \in [0,\infty),
    \quad s, t > 0.
  \end{align}
  One of the consequences of this equation is that, see \cite[Lemma 5.2]{RenSongSun2017Spine} for example,
  \begin{align}
    \label{eq: uniform converges to 0}
    \|\phi^{-1}v_t\|_{\infty} \xrightarrow[t\to \infty]{} 0.
  \end{align}
  However, to prove Theorem \ref{thm: main theorem}(2), we need to consider the speed of this convergence.
  This is answered in the following two propositions whose proofs are postponed after this proof.
  The first proposition says that $(\phi^{-1}v_t)(x)$ will converge to $0$ with the same speed as $\langle v_t,\phi^*\rangle_m$, uniformly in $x\in E$:
  
  \begin{prop}
    \label{prop: convergence in a same speed}
    $(\phi^{-1}v_t)(x) \stackrel[t\to\infty]{x\in E}{\sim} \langle v_t,\phi^*\rangle_m$.
  \end{prop}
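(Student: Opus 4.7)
The plan is to combine the mild equation for $v_T$ with the uniform intrinsic-ultracontractivity estimate \eqref{eq:q(t,x,y)} and a one-step bootstrap driven by the stability exponent $\gamma_0>1$. Fix $t_0\geq 1$ large, and for $T>t_0$ write $I_T:=\|\phi^{-1}v_T\|_\infty$ and $J_T:=\langle v_T,\phi^*\rangle_m$. From the mild form of \eqref{eq:mean-fkpp}, \eqref{eq: definition of v_t(x)} and from \eqref{eq: equation of <vt,phi>} I get
\begin{align}
\frac{v_T(x)}{\phi(x)}=\frac{P^\beta_{t_0}v_{T-t_0}(x)}{\phi(x)}-\frac{1}{\phi(x)}\int_0^{t_0}P^\beta_{t_0-r}\psi_0(\cdot,v_{T-t_0+r})(x)\,dr,
\end{align}
\begin{align}
J_T=J_{T-t_0}-\int_0^{t_0}\langle \psi_0(\cdot,v_{T-t_0+r}),\phi^*\rangle_m\,dr.
\end{align}
By \eqref{eq:q(t,x,y)} applied to $v_{T-t_0}$, the first term on the right of the first identity equals $(1+O(e^{-c_1 t_0}))J_{T-t_0}$ uniformly in $x$, so the task reduces to showing that the nonlinear integrals are of lower order in $T$ than $J_{T-t_0}$.

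The second step will be the nonlinear estimate. Since $\gamma(x)\geq\gamma_0>1$ and \eqref{eq: uniform converges to 0} gives $\|v_{T-t_0}\|_\infty\leq 1$ for $T$ large, and since the map $t\mapsto I_t$ is decreasing (a consequence of $v_{t+s}\leq P^\beta_s v_t$ with $P^\beta_s\phi=\phi$), I obtain the pointwise bound
\begin{align}
\psi_0(y,v_{T-t_0+r}(y))=\kappa(y)v_{T-t_0+r}(y)^{\gamma(y)}\leq \|\kappa\|_\infty v_{T-t_0+r}(y)^{\gamma_0}\leq C\,I_{T-t_0}^{\gamma_0-1}v_{T-t_0+r}(y).
\end{align}
Using $P^\beta_{t_0-r}v_{T-t_0+r}\leq I_{T-t_0}\phi$ and $\langle v_{T-t_0+r},\phi^*\rangle_m\leq J_{T-t_0}$, this yields, uniformly in $x$,
\begin{align}
\frac{1}{\phi(x)}\int_0^{t_0}P^\beta_{t_0-r}\psi_0(\cdot,v_{T-t_0+r})(x)\,dr\leq C t_0 I_{T-t_0}^{\gamma_0},\qquad \int_0^{t_0}\langle \psi_0(\cdot,v_{T-t_0+r}),\phi^*\rangle_m\,dr\leq Ct_0 I_{T-t_0}^{\gamma_0-1}J_{T-t_0}.
\end{align}

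The third step is a short bootstrap to upgrade the first estimate from a bound in $I_{T-t_0}^{\gamma_0}$ to $o(J_{T-t_0})$. First, $v_T\leq P^\beta_{t_0}v_{T-t_0}$ together with the intrinsic ultracontractivity bound $p^\beta_{t_0}(x,y)\leq c_{t_0}\phi(x)\phi^*(y)$ gives $I_T\leq c_{t_0}J_{T-t_0}$. The second display above combined with $I_{T-t_0}^{\gamma_0-1}\to 0$ yields $J_T\geq (1-o(1))J_{T-t_0}$, hence $J_{T-t_0}\leq 2J_T$ for $T$ large, and therefore $I_T=O(J_T)$. Feeding this back gives $I_{T-t_0}^{\gamma_0}\leq CJ_{T-t_0}^{\gamma_0}=o(J_{T-t_0})$. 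Dividing the first identity by $\phi(x)J_{T-t_0}$ then produces
\begin{align}
\limsup_{T\to\infty}\sup_{x\in E}\Big|\frac{v_T(x)}{\phi(x)J_{T-t_0}}-1\Big|\leq c_0 e^{-c_1 t_0},
\end{align}
and since $J_T/J_{T-t_0}\to 1$, the same bound holds with $J_T$ in the denominator. Letting $t_0\to\infty$ finishes the proof.

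The hard part will be controlling the apparent circularity between $I_T$ and $J_T$: the nonlinear residual is naturally estimated in $I_T$ while the target denominator is $J_T$, and a priori these two quantities could differ by an unbounded factor. The bootstrap step — using the mild equation once to establish $I_T=O(J_T)$, then using it a second time to extract the genuine gain $I_{T-t_0}^{\gamma_0-1}=o(1)$ — is what closes the argument, and it is precisely the hypothesis $\gamma_0>1$ that ensures this gain is positive.
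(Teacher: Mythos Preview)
Your proof is correct and takes a genuinely different route from the paper's. The paper argues probabilistically via the spine decomposition: it writes $(\phi^{-1}v_t)(x)=\dot{\mathbf P}^{(\phi)}_{\delta_x}[Y_t(\phi)^{-1}]$ using the immigration process $(Y_t)$, splits $Y_t=Y_t^{(0,t_0]}+Y_t^{(t_0,t]}$, and controls the two resulting errors $\epsilon^1_x,\epsilon^2_x$ through the ergodicity of the spine and a Poisson-measure computation for the survival of early immigrants. In particular, Lemma~\ref{lem:upperbound_of_epsilon-2} produces a bound of the form $t_0\|\phi^{-1}v_{t-t_0}\|_\infty^{\gamma_0-1}\langle v_{t-t_0},\phi^*\rangle_m$ directly, so no comparison between $I_T$ and $J_T$ is needed. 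Your argument, by contrast, is purely analytic: you work with the mild equation for $v_T$, use \eqref{eq:q(t,x,y)} for the linear part, and bound the nonlinear remainder by $C t_0 I_{T-t_0}^{\gamma_0}$. This cruder estimate forces the extra bootstrap step $I_T\le c_{t_0}J_{T-t_0}\le 2c_{t_0}J_T$ before you can conclude $I_{T-t_0}^{\gamma_0}=o(J_{T-t_0})$; once that is in place, your argument closes cleanly. The trade-off: your proof avoids the spine machinery entirely and is therefore more self-contained, while the paper's proof fits the spine framework used throughout the rest of the article and yields the sharper intermediate bound without the bootstrap. One minor technical point: the inequality $v^{\gamma(y)}\le v^{\gamma_0}$ uses $\gamma(y)\ge\gamma_0$, which only holds $m$-a.e.; as the paper does in the proof of Proposition~\ref{prop: non-presistent}, you should first pass to the modified $\widetilde\gamma\ge\gamma_0$ everywhere (this does not change $V_t f$ and hence not $v_t$).
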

  The second proposition characterizes this speed:
  \begin{prop}
    \label{prop: regularly varying of vt-phi-star}
    $(\langle v_t,\phi^*\rangle_m)_{t> 0} $ is regularly varying at $\infty$ with index $-\frac{1}{\gamma_0-1}$.
    Furthermore, if $m(x: \gamma (x)= \gamma_0)>0$, then
    \begin{align}
      \langle v_t,\phi^*\rangle_m
      \stackrel[t\to \infty]{}{\sim} \big(C_X(\gamma_0-1) t \big)^{-\frac{1}{\gamma_0 - 1}},
    \end{align}
    where $C_X:= \langle \mathbf 1_{\gamma= \gamma_0} \kappa \phi^{\gamma_0}, \phi^* \rangle_m $.
  \end{prop}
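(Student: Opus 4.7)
The plan is to study the scalar function $g(t) := \langle v_t, \phi^*\rangle_m$, which by \eqref{eq: equation of <vt,phi>} (with $\psi_0(x,z) = \kappa(x) z^{\gamma(x)}$) satisfies
\begin{align}
g(s) - g(t) = \int_s^t \langle \kappa(\cdot) v_r(\cdot)^{\gamma(\cdot)}, \phi^*\rangle_m \, dr, \quad t \geq s > 0.
\end{align}
In particular $g$ is absolutely continuous and strictly decreasing, with $g'(r) = -\langle \kappa v_r^\gamma, \phi^*\rangle_m$ a.e. By Proposition \ref{prop: convergence in a same speed}, $v_r(x)/(g(r)\phi(x)) \to 1$ uniformly in $x \in E$; since $\gamma$ is bounded, also $v_r(x)^{\gamma(x)}/(g(r)\phi(x))^{\gamma(x)} \to 1$ uniformly. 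Applying Lemma \ref{lem: asymptotic equivalent of integration} to the finite measure $\phi^*\, dm$ then yields
\begin{align}
\langle \kappa v_r^\gamma, \phi^*\rangle_m \stackrel[r \to \infty]{}{\sim} h(g(r)), \qquad h(u) := \int_E u^{\gamma(x)} \rho(dx),
\end{align}
where $\rho(dx) := \kappa(x)\phi(x)^{\gamma(x)}\phi^*(x)\, m(dx)$ is a finite non-degenerate measure that shares its null sets with $m$ (by the strict positivity of $\kappa, \phi, \phi^*$), so $\operatorname{ess\,inf}_\rho \gamma = \gamma_0$.

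By Lemma \ref{lem:regularly_variation_and_integration}, $h \in \mathcal R^0_{\gamma_0}$; write $h(u) = u^{\gamma_0} l(u)$ with $l$ slowly varying at $0$, and note that $l(u) \to \rho(\gamma = \gamma_0) = C_X$ as $u \to 0$ when $m(\gamma = \gamma_0) > 0$. Fix a small $u_0 > 0$ and define the time change
\begin{align}
H(u) := \int_u^{u_0} \frac{dv}{h(v)}, \quad 0 < u \leq u_0.
\end{align}
Because $\gamma_0 > 1$, $1/h$ is non-integrable near $0$, so $H(u) \to \infty$ as $u \to 0$; Corollary \ref{cro: power law and ingetration} with $\alpha = 1 - \gamma_0 < 0$ gives
\begin{align}
H(u) \stackrel[u\to 0]{}{\sim} \frac{u^{1-\gamma_0}}{(\gamma_0-1)\, l(u)}, \qquad \text{so } H \in \mathcal R^0_{1-\gamma_0}.
\end{align}

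Applying the chain rule to $H \circ g$ and integrating, for large $s \leq t$,
\begin{align}
H(g(t)) - H(g(s)) = \int_s^t \frac{\langle \kappa v_r^\gamma, \phi^*\rangle_m}{h(g(r))}\, dr.
\end{align}
Since the integrand converges to $1$, this difference is $\sim t - s$ as $t \to \infty$; fixing $s$ then gives $H(g(t)) \sim t$. Corollary \ref{cro: regularly varing and inverse with alpha < 0} supplies an asymptotic inverse $g_* \in \mathcal R^\infty_{-1/(\gamma_0-1)}$ of $H$, and $H(g(t)) \sim t$ forces $g(t) \sim g_*(t)$; hence $g \in \mathcal R^\infty_{-1/(\gamma_0-1)}$. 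When $m(\gamma = \gamma_0) > 0$, $l(u) \to C_X$ gives $H(u) \sim u^{1-\gamma_0}/((\gamma_0-1)C_X)$, whose asymptotic inverse is $((\gamma_0-1)C_X t)^{-1/(\gamma_0-1)}$, yielding the stated explicit equivalence.

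The principal technical obstacle will be the passage from the scalar uniform asymptotic $v_r \sim g(r)\phi$ to the integral asymptotic $\langle \kappa v_r^\gamma, \phi^*\rangle_m \sim h(g(r))$: this relies on the boundedness of $\gamma$, so that raising to the spatially varying power $\gamma(x)$ preserves uniform convergence, and on $\rho$ inheriting $m$'s null sets so that the essential infima of $\gamma$ agree. Once this reduction is secured, the rest amounts to a direct regular-variation calculation and an application of the inversion in Corollary \ref{cro: regularly varing and inverse with alpha < 0}.
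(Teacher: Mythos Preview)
Your proof is correct and follows essentially the same approach as the paper: both reduce the problem via Proposition~\ref{prop: convergence in a same speed} and Lemma~\ref{lem: asymptotic equivalent of integration} to the regular variation of $h(u)=\int_E u^{\gamma(x)}\kappa\phi^\gamma\phi^*\,dm$, then invoke Corollary~\ref{cro: power law and ingetration} and Corollary~\ref{cro: regularly varing and inverse with alpha < 0} to invert. The only cosmetic difference is that the paper works with the exact inverse $R$ of $t\mapsto\langle v_t,\phi^*\rangle_m$ and shows $R(r)=\int_r^\infty[\cdot]^{-1}\,du$ with integrand $\sim h(u)^{-1}$, whereas you define the idealized time change $H(u)=\int_u^{u_0}h(v)^{-1}\,dv$ and show $H(g(t))\sim t$; the subsequent inversion step is the same.
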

  It follows from \eqref{eq: equation for mu v-t} and \eqref{eq: uniform converges to 0} that
  \begin{align}
    - \log \mathbf P_\mu(\|X_t\| = 0)
    = \mu(v_t)
    \leq \mu(\phi) \| \phi^{-1} v_t\|_{\infty}
    \xrightarrow[t\to \infty]{} 0.
  \end{align}
  Therefore, since $- \log (1-x) \to 0$ implies $x \to 0$, we have $\mathbf P_\mu(\|X_t\| \neq 0) \xrightarrow[t\to \infty]{} 0$.

  It follows from the fact that $x \stackrel[x\to 0]{}{\sim} - \log(1-x)$, \eqref{eq: equation for mu v-t}, Lemma \ref{lem: asymptotic equivalent of integration} and Proposition \ref{prop: convergence in a same speed} that
  \begin{align}
    \mathbf P_\mu(\|X_t\| \neq 0)
    \stackrel[t\to \infty]{}{\sim} - \log \mathbf P_\mu(\|X_t\| = 0)
    = \mu(\phi \phi^{-1}v_t)
    \stackrel[t\to\infty]{}{\sim} \mu(\phi) \langle v_t, \phi^*\rangle_m.
  \end{align}
  Then the desired result follows immediately from Proposition \ref{prop: regularly varying of vt-phi-star}.
\end{proof}

\begin{proof}
  [Proof of Proposition \ref{prop: convergence in a same speed}]
	We use an argument similar to that used in \cite{RenSongSun2017Spine} for critical superprocesses with finite 2nd moment.
  We only need to prove that there exists a map $t\mapsto a_t > 0$ such that
  \begin{align}
    \label{eq:k1}
    \sup_{x\in E} \Big| \frac{ ( \phi^{-1} v_t )(x)}{ a_t} - 1 \Big|
    \xrightarrow[t\to \infty]{} 0.
  \end{align}
  In fact, once this is proved, we will have that
  \begin{align}
    \label{eq:k2}
    \Big |\frac {\langle v_t, \phi^*\rangle_m} {a_t } - 1 \Big |
    & \leq \int \Big | \frac{(\phi^{-1}v_t)(x)}{ a_t } - 1 \Big| \phi \phi^*(x) m(dx)
    \\ & \leq \sup_{x\in E}\Big|\frac{(\phi^{-1}v_t)(x)}{ a_t }-1 \Big|
         \xrightarrow[t\to\infty]{} 0.
  \end{align}
  Then, by \eqref{eq:k1}, \eqref{eq:k2} and the property of uniform convergence, we will get the desired result:
  \begin{align}
    \sup_{x\in E}\Big|\frac{(\phi^{-1}v_t)(x)}{\langle v_{t},\phi^* \rangle_m}-1 \Big|
    \xrightarrow[t\to\infty]{} 0.
  \end{align}
	
  For each $\mu\in\mathcal M^\phi_E$, denote by $\{(Y_t), (\xi_t),\mathbf n; \dot {\mathbf P}^{(\phi)}_\mu\}$ the spine representation of $\mathbb N^{(\phi)}_\mu$.
	According to \eqref	{eq: equation for mu v-t}, \eqref{eq: mean of kuz measure} and Theorem \ref{thm: spine representation}, we have that for each $t>0$,
  \begin{align}
    \label{eq:vt-and-Y}
    \langle \mu,\phi \rangle \dot {\mathbf P}^{(\phi)}_\mu [Y_t(\phi)^{-1}]
    = \mathbb N_\mu[W_t(\phi)] \mathbb N^{W_t(\phi)}_\mu [W_t(\phi)^{-1}]
    = \mathbb N_\mu(W_t(\phi) > 0)
    = \mu(v_t).
  \end{align}
  Taking $\mu = \delta_x$ in \eqref{eq:vt-and-Y}, we get $(\phi^{-1}v_t)(x) =\dot{\mathbf P}_{\delta_x}^{(\phi)}[Y_t(\phi)^{-1}]$.
  Recall that $d\nu = \phi^* dm$.
  Taking $\mu = \nu$ in \eqref{eq:vt-and-Y}, we get $\langle v_t, \phi^*\rangle_m = \dot {\mathbf P}_{\nu}^{(\phi)} [Y_t(\phi)^{-1}]$.
  
  In order to construct an $(a_t)_{t\geq 0}$ satisfying \eqref{eq:k1}, we consider a decomposition of the immigration process $(Y_t)_{t\geq 0}$.
  For any $t>0$ and any $G\in \mathscr B((0,t])$, define
  \begin{align}
    Y^G_t
    := \int_{G\times \mathbb W} w_{t-s} \mathbf n(ds,dw).
  \end{align}
  Then for any $0 < t_0 < t$, we can decompose $Y_t$ into
  \begin{align}
    Y_t
    = Y^{(0,t_0]}_t + Y^{(t_0,t]}_t.
  \end{align}
  Using this decomposition, for each $0<t_0<t<\infty$ and $x\in E$, we have
  \begin{align}
    \label{eq: starting point of phi-1v_t(x)}
    \dot{\mathbf P}_{\delta_x}^{(\phi)}[Y_t(\phi)^{-1}]
    = \dot {\mathbf P}_\nu^{(\phi)} [Y^{(t_0,t]}_t(\phi)^{-1}] + \epsilon_x^1(t_0,t) +\epsilon_x^2(t_0,t),
  \end{align}
  where
  \begin{align}
    \epsilon_x^1(t_0,t)
    &:= \dot {\mathbf P}_{\delta_x}^{(\phi)} [Y^{(t_0,t]}_t(\phi)^{-1}] - \dot {\mathbf P}_\nu^{(\phi)} [Y^{(t_0,t]}_t(\phi)^{-1}];
    \\\epsilon_x^2(t_0,t)
    &:= \dot{\mathbf P}_{\delta_x}^{(\phi)}[Y_t(\phi)^{-1} - Y^{(t_0,t]}_t(\phi)^{-1}].
  \end{align}

  By the construction of the spine representation $\{(Y_t), (\xi_t),\mathbf n; \dot {\mathbf P}^{(\phi)}_\mu\}$ and its Markov property, we have that
  \begin{align}
    \label{eq: some equations for PY-1-1}
    &\dot{\mathbf P}^{(\phi)} [Y_t^{(t_0,t]}(\phi)^{-1}|\mathscr F^\xi_{t_0}]
      = \dot{\mathbf P}_{\delta_{\xi_{t_0}}}^{(\phi)}  [Y_{t-t_0}(\phi)^{-1}]
      = (\phi^{-1}v_{t-t_0})(\xi_{t_0});
    \\ \label{eq: some equations for PY-1-2}
    &\dot{\mathbf P}_\nu^{(\phi)}[Y_t^{(t_0,t]}(\phi)^{-1}]
      = \Pi_{\nu}^{(\phi)}[(\phi^{-1}v_{t-t_0})(\xi_{t_0}) ]
      = \langle v_{t-t_0},\phi^* \rangle_m;
    \\ \label{eq: some equations for PY-1-3}
    &\dot{\mathbf P}_{\delta_x}^{(\phi)}[Y_t^{(t_0,t]}(\phi)^{-1}]
      = \Pi_x^{(\phi)}[(\phi^{-1}v_{t-t_0})(\xi_{t_0}) ]
      = \int_E  q_{t_0}(x,y)(\phi^{-1}v_{t-t_0})(y) m(dy).
  \end{align}
  
  We will show that both $\epsilon_x^1(t_0,t)$ and $\epsilon_x^2(t_0,t)$ are very small compared to $\dot {\mathbf P}_\nu^{(\phi)}[Y_t^{(t_0,t]}(\phi)^{-1}]$ provided $t_0$ and $t- t_0$ are large enough.
  This is done in the following two lemmas whose proofs are postponed after this proof.

  Let $c_0, c_1>0$ be the constants in \eqref{eq:q(t,x,y)}.
  \begin{lem}
    \label{lem:bound_for_epsilon1}
    For each $t > t_0 > 1$, we have that
    \begin{align}
      |\epsilon_x^1(t_0,t)|
      \leq c_0 e^{-c_1 t_0}\langle v_{t-t_0},\phi^* \rangle_m.
    \end{align}
  \end{lem}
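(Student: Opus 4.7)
The plan is to rewrite $\epsilon_x^1(t_0,t)$ explicitly by using the formulas \eqref{eq: some equations for PY-1-2} and \eqref{eq: some equations for PY-1-3} already established just before the lemma. Substituting these gives
\begin{align}
  \epsilon_x^1(t_0,t)
  = \int_E q_{t_0}(x,y)(\phi^{-1}v_{t-t_0})(y)\,m(dy) - \langle v_{t-t_0},\phi^*\rangle_m.
\end{align}
The key observation is that the second term can be absorbed into the integral against $m(dy)$ by writing $\phi^*(y)v_{t-t_0}(y) = \phi(y)\phi^*(y)\cdot(\phi^{-1}v_{t-t_0})(y)$, which produces the single expression
\begin{align}
  \epsilon_x^1(t_0,t)
  = \int_E \bigl[q_{t_0}(x,y) - \phi(y)\phi^*(y)\bigr](\phi^{-1}v_{t-t_0})(y)\,m(dy).
\end{align}

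Once this representation is in place, the uniform-in-$x$ estimate \eqref{eq: asymptotic for q_t(x,y)} for the transition density of the spine, which says $|q_{t_0}(x,y)/(\phi\phi^*)(y) - 1|\leq c_0 e^{-c_1 t_0}$ for $t_0 > 1$, gives the pointwise bound
\begin{align}
  |q_{t_0}(x,y) - \phi(y)\phi^*(y)|
  \leq c_0 e^{-c_1 t_0}\,\phi(y)\phi^*(y),
  \quad y\in E.
\end{align}
Since $\phi^{-1}v_{t-t_0}\geq 0$ (because $v_{t-t_0}\in \mathscr B_b^{++}(E)$ was established earlier in the proof of Theorem \ref{thm: main theorem}(2)), I can take absolute values inside the integral and pull the exponential factor out, which yields
\begin{align}
  |\epsilon_x^1(t_0,t)|
  \leq c_0 e^{-c_1 t_0}\int_E \phi(y)\phi^*(y)(\phi^{-1}v_{t-t_0})(y)\,m(dy)
  = c_0 e^{-c_1 t_0}\langle v_{t-t_0},\phi^*\rangle_m,
\end{align}
which is exactly the desired inequality. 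There is no real obstacle here: the whole lemma is essentially an algebraic rearrangement combined with the ready-made intrinsic ultracontractivity estimate \eqref{eq: asymptotic for q_t(x,y)}; the only thing one has to be careful about is that the constants $c_0,c_1$ used in the statement of the lemma are precisely the constants appearing in \eqref{eq: asymptotic for q_t(x,y)}, and that $t_0 > 1$ is required so that \eqref{eq: asymptotic for q_t(x,y)} can be invoked.
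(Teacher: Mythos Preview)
Your proof is correct and follows essentially the same route as the paper: substitute \eqref{eq: some equations for PY-1-2} and \eqref{eq: some equations for PY-1-3} into the definition of $\epsilon_x^1$, combine the two terms into a single integral against $(\phi^{-1}v_{t-t_0})(y)\,m(dy)$, and apply the intrinsic ultracontractivity bound \eqref{eq: asymptotic for q_t(x,y)}. The only difference is cosmetic---you spell out the identity $\phi^*v_{t-t_0}=\phi\phi^*\cdot\phi^{-1}v_{t-t_0}$ explicitly, whereas the paper absorbs this step into a single chain of inequalities.
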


  \begin{lem}
    \label{lem:upperbound_of_epsilon-2}
    For each $t_0 > 1$ and $t - t_0$ large enough, we have
    \begin{align}
      \label{eq:upperbound_of_epsilon-2}
      |\epsilon_x^2(t_0,t)|
      \leq t_0\|\kappa\gamma\phi^{\gamma - 1}\|_{\infty} \cdot \|\phi^{-1}v_{t-t_0}\|^{\gamma_0-1}_\infty (1+c_0 e^{-c_1 t_0}) \langle v_{t-t_0},\phi^* \rangle_m.
    \end{align}
  \end{lem}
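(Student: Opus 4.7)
The plan is to express $\epsilon_x^2(t_0,t)$ as an explicit integral via the nonlinear semigroup of $X$, then bound the integrand using the uniform smallness of $\phi^{-1}v_{t-t_0}$ together with the intrinsic-ultracontractivity estimate for the spine. Letting $\lambda\to\infty$ in $V_{t_0+r}(\lambda\mathbf 1_E) = V_{t_0}(V_r(\lambda\mathbf 1_E))$ gives $v_t = V_{t_0}(v_{t-t_0})$; plugging this into the mild equation \eqref{eq:mean-fkpp} and changing variable yields
\begin{align}
v_t = P_{t_0}^\beta v_{t-t_0} - \int_0^{t_0} P_\tau^\beta\big[\kappa\, v_{t-\tau}^{\gamma}\big]\, d\tau.
\end{align}
Dividing by $\phi$, using $\phi(x)^{-1}P_{t_0}^\beta g(x) = \Pi_x^{(\phi)}[\phi(\xi_{t_0})^{-1}g(\xi_{t_0})]$, and comparing with \eqref{eq: some equations for PY-1-3}, I identify
\begin{align}
|\epsilon_x^2(t_0,t)| = \phi(x)^{-1}\int_0^{t_0} P_\tau^\beta\big[\kappa\, v_{t-\tau}^{\gamma}\big](x)\, d\tau.
\end{align}

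Next, set $M := \|\phi^{-1}v_{t-t_0}\|_\infty$; by \eqref{eq: uniform converges to 0} we have $M\leq 1$ once $t-t_0$ is large. Since $v_r$ is non-increasing in $r$, on $[0,t_0]$ one has $v_{t-\tau}\leq v_{t-t_0}\leq M\phi$, so
\begin{align}
\kappa(y)v_{t-\tau}(y)^{\gamma(y)}
&= \kappa(y)\phi(y)^{\gamma(y)-1}\big(\phi(y)^{-1}v_{t-\tau}(y)\big)^{\gamma(y)-1}v_{t-\tau}(y)
\\&\leq M^{\gamma_0-1}\|\kappa\gamma\phi^{\gamma-1}\|_\infty v_{t-\tau}(y),
\end{align}
where the inequality uses $\phi^{-1}v_{t-\tau}\leq M\leq 1$ together with $\gamma(y)-1\geq\gamma_0-1$, and I insert the harmless factor $\gamma(y)\geq 1$ to match the stated bound.

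Finally, from $v_{t-\tau} = V_{t_0-\tau}(v_{t-t_0}) \leq P_{t_0-\tau}^\beta v_{t-t_0}$ (a consequence of $\psi_0\geq 0$ in \eqref{eq:mean-fkpp}) and the semigroup property,
\begin{align}
\phi(x)^{-1}P_\tau^\beta[v_{t-\tau}](x)
\leq \phi(x)^{-1}P_{t_0}^\beta v_{t-t_0}(x)
= \Pi_x^{(\phi)}\big[(\phi^{-1}v_{t-t_0})(\xi_{t_0})\big],
\end{align}
which the ultracontractivity estimate \eqref{eq: asymptotic for q_t(x,y)} (valid for $t_0>1$) bounds further by $(1+c_0 e^{-c_1 t_0})\langle v_{t-t_0},\phi^*\rangle_m$. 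Combining the three displays above and computing $\int_0^{t_0}d\tau = t_0$ delivers \eqref{eq:upperbound_of_epsilon-2}.

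The hard part, I expect, is the first step: identifying $\epsilon_x^2$ with the nonlinear correction that emerges when $v_t$ is unfolded over $[0,t_0]$ through the mild equation, by pairing the probabilistic definition \eqref{eq: some equations for PY-1-3} with the analytic semigroup identity $v_t = V_{t_0}(v_{t-t_0})$. Once this formula is in hand, the remaining two steps are routine applications of the monotonicity of $v_\cdot$, the inequality $V\leq P^\beta$, and the intrinsic-ultracontractivity bound on $q_{t_0}$.
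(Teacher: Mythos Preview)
Your argument is correct and takes a genuinely different route from the paper. The paper works entirely probabilistically: it writes
\[
|\epsilon_x^2(t_0,t)| = \dot{\mathbf P}_{\delta_x}^{(\phi)}\big[Y_t^{(0,t_0]}(\phi)\cdot Y_t(\phi)^{-1}\cdot Y_t^{(t_0,t]}(\phi)^{-1}\big]
\leq \dot{\mathbf P}_{\delta_x}^{(\phi)}\big[\mathbf 1_{Y_t^{(0,t_0]}(\phi)\neq 0}\cdot Y_t^{(t_0,t]}(\phi)^{-1}\big],
\]
conditions on $\mathscr F_{t_0}^\xi$, and then bounds $\dot{\mathbf P}_{\delta_x}^{(\phi)}[\mathbf 1_{\|Y_t^{(0,t_0]}\|\neq 0}\mid \mathscr F_{t_0}^\xi]$ by a Campbell-formula computation on the immigration Poisson measure $\mathbf n$, which is exactly where the factor $t_0\|\kappa\gamma\phi^{\gamma-1}\|_\infty\|\phi^{-1}v_{t-t_0}\|_\infty^{\gamma_0-1}$ emerges. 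You instead use the purely analytic identity $\epsilon_x^2(t_0,t)=\phi(x)^{-1}\big(v_t(x)-P_{t_0}^\beta v_{t-t_0}(x)\big)$, obtained by combining \eqref{eq:vt-and-Y} and \eqref{eq: some equations for PY-1-3}, and then unfold $v_t=V_{t_0}(v_{t-t_0})$ through the mild equation \eqref{eq:mean-fkpp}. This is more elementary---it avoids the Poisson-measure calculus entirely and never drops back to the spine representation---and it yields the bound as an equality-plus-estimate rather than via the cruder indicator bound $Y_t^{(0,t_0]}(\phi)/Y_t(\phi)\leq \mathbf 1_{Y_t^{(0,t_0]}(\phi)\neq 0}$. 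The paper's approach, on the other hand, stays within the unified spine framework used throughout Section~3 and makes the probabilistic mechanism (early immigrants surviving to time $t$) transparent. One small point worth making explicit in your write-up: the inequality $(\phi^{-1}v_{t-\tau})^{\gamma(y)-1}\leq M^{\gamma_0-1}$ relies on $\gamma(y)\geq\gamma_0$, which holds only $m$-a.e.; this is harmless because $P_\tau^\beta$ for $\tau>0$ integrates against $m$.
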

  
  Now, for each $t_0 > 1$ and $t-t_0$ large enough, according to \eqref{eq:vt-and-Y},  \eqref{eq: starting point of phi-1v_t(x)}, \eqref{eq: some equations for PY-1-2}, Lemmas \ref{lem:bound_for_epsilon1} and \ref{lem:upperbound_of_epsilon-2}, we have
  \begin{align} 
    \label{vts-inequality}
    &\Big|\frac{(\phi^{-1}v_t)(x)}{\langle v_{t-t_0},\phi^* \rangle_m}-1 \Big|
      \leq \frac{|\epsilon_x^1(t_0,t)|}{\langle v_{t-t_0},\phi^* \rangle_m} + \frac{|\epsilon_x^2(t_0,t)|}{\langle v_{t-t_0},\phi^* \rangle_m}\\
    & \quad \leq c_0e^{-c_1 t_0} +t_0\|\kappa\gamma\phi^{\gamma - 1}\|_{\infty}
      \cdot \|\phi^{-1}v_{t-t_0}\|^{\gamma_0-1}_\infty (1+c_0 e^{-c_1 t_0}).
  \end{align}
  According to \eqref{eq: uniform converges to 0}, there exists a map $t\mapsto t_0(t)$ such that,
  \begin{align}
    t_0(t)
    \xrightarrow[t\to\infty]{} \infty;
    \quad t_0(t)\| \phi^{-1}v_{t-t_0(t)}\|^{\gamma_0 - 1}_\infty
    \xrightarrow[t\to\infty]{} 0.
  \end{align}
  Plugging this choice of $t_0(t)$ into \eqref{vts-inequality} and taking $t\to \infty$, we get the desired assertion \eqref{eq:k1} with $a_t := \langle v_{t-t_0(t)},\phi^* \rangle_m$.
\end{proof}

\begin{proof}
  [Proof of Lemma \ref{lem:bound_for_epsilon1}]
  Note that $c_0, c_1>0$ are the constants in \eqref{eq:q(t,x,y)}.
  Then for each $t > t_0 > 1$, we have that
  \begin{align}
    |\epsilon_x^1(t_0,t)|
    & = \big| \dot {\mathbf P}_{\delta_x}^{(\phi)} [Y^{(t_0,t]}_t(\phi)^{-1}] - \dot {\mathbf P}_\nu^{(\phi)} [Y^{(t_0,t]}_t(\phi)^{-1}] \big| \\
    & = \Big|  \int_E  q_{t_0}(x,y)(\phi^{-1}v_{t-t_0})(y) m(dy) - \langle v_{t-t_0},\phi^* \rangle_m \Big|\\
    & \leq \int_{y\in E} \big| q_{t_0}(x,y) - (\phi\phi^*)(y) \big| (\phi^{-1}v_{t-t_0})(y) m(dy)\\
    & \leq c_0 e^{-c_1 t_0}\langle v_{t-t_0},\phi^* \rangle_m.
      \qedhere
  \end{align}
\end{proof}

\begin{proof}
  [Proof of Lemma \ref{lem:upperbound_of_epsilon-2}]
  Using the Markov property of the spine process and properties of Poisson random measures, we have
\begin{align}
  \label{eq:epsilon-2}
	|\epsilon_x^2(t_0,t)|
	&= \big| \dot{\mathbf P}_{\delta_x}^{(\phi)}[Y_t(\phi)^{-1} - Y^{(t_0,t]}_t(\phi)^{-1}] \big|
	\\&= \dot{\mathbf P}_{\delta_x}^{(\phi)}[Y_t^{(0,t_0]}(\phi)\cdot Y_t(\phi)^{-1}\cdot Y^{(t_0,t]}_t(\phi)^{-1}]
	\\&\leq \dot{\mathbf P}_{\delta_x}^{(\phi)}[\mathbf 1_{Y_t^{(0,t_0]}(\phi)\neq 0}\cdot Y^{(t_0,t]}_t(\phi)^{-1}]
	\\&= \dot{\mathbf P}_{\delta_x}^{(\phi)} \big[\dot{\mathbf P}_{\delta_x}^{(\phi)}[\mathbf 1_{Y_t^{(0,t_0]}(\phi)\neq 0}|\mathscr F^\xi_{t_0}] \cdot \dot{\mathbf P}_{\delta_x}^{(\phi)} [ Y^{(t_0,t]}_t(\phi)^{-1}|\mathscr F^\xi_{t_0}] \big].
\end{align}

Since $\phi^{-1}v_s$ converges to $0$ uniformly when $s\to \infty$, we can choose  $s_0>0$ such that for any $s\geq s_0$, we have $\|\phi^{-1}v_s\|_{\infty} \leq 1$.
With this $s_0>0$, we claim that for each $t - t_0\geq s_0$ the following holds:
\begin{align}
  \label{eq:epsilon-2-one}
  \dot{\mathbf P}_{\delta_x}^{(\phi)}[\mathbf 1_{\| Y_t^{(0,t_0]}\|\neq 0}|\mathscr F^\xi_{t_0}]
  \leq t_0\|\kappa \gamma \phi^{\gamma - 1}\|_\infty \cdot \|\phi^{-1}v_{t-t_0}\|^{\gamma_0-1}_\infty.
\end{align}
We will verify this claim at the end of this proof.

On the other hand, according to \eqref{eq: asymptotic for q_t(x,y)} and \eqref{eq: some equations for PY-1-3}, we know that
\begin{align}
  \label{eq:epsilon-2-final}
	\dot{\mathbf P}_{\delta_x}^{(\phi)}[ Y^{(t_0,t]}_t(\phi)^{-1}]
	\leq (1+c_0 e^{-c_1 t_0}) \langle v_{t-t_0},\phi^* \rangle_m.
\end{align}
Therefore, from \eqref{eq:epsilon-2}, \eqref{eq:epsilon-2-one} and \eqref{eq:epsilon-2-final}, we get that
\begin{align}
  &|\epsilon_x^2(t_0,t)|
    \leq t_0 \|\kappa \gamma \phi^{\gamma - 1}\|_\infty \|\phi^{-1} v_{t-t_0}\|_\infty^{\gamma_0 - 1} \cdot \dot{\mathbf P}_{\delta_x}^{(\phi)} \big[ Y_t^{(t_0, t]}(\phi)^{-1} \big]
  \\ & \leq t_0 \|\kappa \gamma \phi^{\gamma - 1}\|_\infty \|\phi^{-1} v_{t-t_0}\|_\infty^{\gamma_0 - 1} \cdot (1+ c_0 e^{-c_1 t_0})\langle v_{t-t_0}, \phi^* \rangle_m,
\end{align}
as desired.

We now verify the claim \eqref{eq:epsilon-2-one}.
Note that, if $t-s > t-t_0 \geq s_0$, using the fact that $v_t$ is non-increasing in $t$, we get
\begin{align}
	\kappa(x)\gamma(x) v_{t-s}(x)^{\gamma(x)-1}
	\leq \|\kappa \gamma \phi^{\gamma - 1}\|_\infty \cdot \|\phi^{-1} v_{t-s}\|^{\gamma_0-1}_\infty
	\leq \|\kappa\gamma\phi^{\gamma - 1}\|_\infty \cdot \|\phi^{-1}v_{t-t_0}\|^{\gamma_0-1}_\infty.
\end{align}
Therefore, using Campbell's formula, \eqref{eq: definition of Gamma function} and the fact that $e^{-x} \geq 1-x$, we have,  for $t-t_0 \geq s_0$,
\begin{align}
	&\dot{\mathbf P}_{\delta_x}^{(\phi)}[\mathbf 1_{\| Y_t^{(0,t_0]}\|\neq 0}|\mathscr F^\xi_{t_0}]
   \leq - \log \big( 1- \dot{\mathbf P}_{\delta_x}^{(\phi)}[\mathbf 1_{ \| Y_t^{(0,t_0]}\|\neq 0}|\mathscr F^\xi_{t_0}]\big)
	\\&\quad =  - \log \lim_{\lambda \to \infty}\dot{\mathbf P}_{\delta_x}^{(\phi)}[e^{- \lambda Y_t^{(0,t_0]}(\mathbf 1_E) }|\mathscr F^\xi_{t_0}]
	\\&\quad = -\log \lim_{\lambda \to \infty} \exp\Big\{- \int_{[0,t]\times \mathbb W} \big( 1-\exp\{- \mathbf 1_{s\leq t_0} w_{t-s}(\lambda \mathbf 1_E)\}  \big) \mathbf m^\xi(ds,dw)\Big\}
	\\&\quad = \int_{[0,t]\times \mathbb W}\mathbf 1_{s\leq t_0} \mathbf 1_{ \|w_{t-s}\| \neq 0} \mathbf m^\xi(ds,dw)
	= \int_0^{t_0} ds \int_{(0,\infty)} y\mathbf P_{y\delta_{\xi_s}}[\mathbf 1_{ \|X_{t-s}\| \neq 0}]\pi(\xi_s,dy)
  \\&\quad= \int_0^{t_0} ds \int_{(0,\infty)} y (1-e^{-yv_{t-s}(\xi_s)})  \frac{\kappa(\xi_s)dy}{\Gamma(-\gamma(\xi_s)) y^{1+\gamma(\xi_s)}}
	= \int_0^{t_0} \big( \kappa \gamma  v_{t-s}^{\gamma - 1} \big) (\xi_s)ds
	\\&\quad \leq  t_0\|\kappa \gamma \phi^{\gamma - 1}\|_\infty \cdot \|\phi^{-1}v_{t-t_0}\|^{\gamma_0-1}_\infty.
\end{align}
This ends the verification of the claim \eqref{eq:epsilon-2-one}, and thus also completes the proof of Lemma \ref{lem:upperbound_of_epsilon-2}.
\end{proof}

\begin{proof}
  [Proof of Proposition \ref{prop: regularly varying of vt-phi-star}]
	From \eqref{eq: equation of <vt,phi>} we know that $\langle v_t,\phi^* \rangle_m$ is continuous and strictly decreasing in $t \in (0,\infty)$.
	Since the superprocess $(X_t)_{t\geq 0}$ is right continuous in the weak topology with the null measure as an absorbing state, we have that, for each $\mu \in \mathcal M_E^1$, $\mathbf P_\mu (\|X_t\| \neq 0) \xrightarrow[t\to 0]{} 1$.
	Taking $\mu = \nu$, according to \eqref{eq: equation for mu v-t}, we have that $\langle v_{t}, \phi^*\rangle_m \xrightarrow[t\to 0]{} +\infty$.
	On the other hand, according to \eqref{eq: uniform converges to 0}, we have
  \begin{align}
    \langle v_{t}, \phi^*\rangle_m
    = \langle \phi^{-1} v_t \phi, \phi^*\rangle_m
    \leq \|\phi^{-1} v_t\|_\infty \cdot \langle \phi, \phi^* \rangle_m
    \xrightarrow[t\to \infty]{} 0.
  \end{align}
  Therefore, the map $t\mapsto \langle v_t,\phi^*  \rangle_m$ has an inverse on $(0,\infty)$ which is denoted by
  \begin{align}
    R: (0,\infty) \to (0,\infty).
  \end{align}
	
  Now, if we denote by
  \begin{align}
    \epsilon_{t}(x)
    : = \frac{v_t(x)}{\langle v_t, \phi^*\rangle_m \phi(x)} - 1,
    \quad t>0, x\in E,
  \end{align}
  then we have
  \begin{align}
    \label{eq: change variable using inverse}
    v_t(x)
    = \big(1+ \epsilon_{R(\langle v_t,\phi^* \rangle_m)}(x) \big )\langle v_t,\phi^* \rangle_m \phi(x),
    \quad t>0, x\in E.
  \end{align}
  Further, by Proposition \ref{prop: convergence in a same speed} and the fact that $R(u)\xrightarrow[u\to 0]{} \infty$, we have
  \begin{align}
    \label{eq: epsilon R converges to 0}
    \sup_{x\in E}|\epsilon_{R(u)}(x)|
    \xrightarrow[u\to 0]{} 0.
  \end{align}
  
	Now, by \eqref{eq: equation of <vt,phi>}, we have
  \begin{align}
    \frac{d \langle v_r, \phi^* \rangle_m}{dr}
    = - \langle \psi_0(\cdot ,v_r) ,\phi^*\rangle_m
    > 0
    \quad a.e..
  \end{align}
	Therefore,
  \begin{align}
    s-t
    & = \int_t^s dr
      = \int_s^t \langle \psi_0(\cdot ,v_r), \phi^*\rangle _m^{-1} d\langle v_r ,\phi^* \rangle_m
    \\ & \overset{\text{by \eqref{eq: change variable using inverse} }}{=} \int_s^t \big\langle \psi_0\big( \cdot ,(1+ \epsilon_{R(\langle v_r,\phi^* \rangle_m)})\langle v_r,\phi^*\rangle_m \phi \big), \phi^* \big\rangle_m^{-1} d\langle v_r ,\phi^* \rangle_m
    \\ & = \int_{\langle v_s,\phi^*\rangle}^{\langle v_t, \phi^* \rangle} \big\langle \psi_0 \big( \cdot ,( 1 + \epsilon_{R(u)} ) u \phi \big), \phi^* \big\rangle_m^{-1} du.
  \end{align}
	Letting $t\to 0$, we get
  \begin{align}
    s
    = \int_{\langle v_s,\phi^*\rangle}^\infty \big\langle \psi_0 \big(\cdot ,( 1 + \epsilon_{R(u)} ) u \phi \big), \phi^* \big\rangle_m^{-1} du,
    \quad s\in (0,\infty).
  \end{align}
  Since $R$ is the inverse of $t\mapsto \langle v_t,\phi^*\rangle$, the above implies that
  \begin{align}
    \label{eq: integral equation for R}
    R(r)
    = \int_r^\infty \big\langle \psi_0 \big(\cdot ,( 1 + \epsilon_{R(u)} ) u \phi \big), \phi^* \big\rangle_m^{-1} du,
    \quad r\in (0,\infty).
  \end{align}
	
  We now check the regularly varying property of $R(r)$ at $r=0$.
  This can be done by considering the regularly varying property of $u\to \big\langle \psi_0 \big(\cdot ,( 1 + \epsilon_{R(u)} ) u \phi \big), \phi^* \big\rangle_m$ at 0.
  According to \eqref{eq: epsilon R converges to 0},  $1+ \epsilon_{R(u)}(x) \stackrel[u\to 0]{x\in E}{\sim} 1$.
  Since $\gamma(\cdot)$ is bounded, we have $\big(1+ \epsilon_{R(u)}(x)\big)^{\gamma(x)}\stackrel[u\to 0]{x\in E}{\sim} 1$.
  Therefore, from Lemma \ref{lem: asymptotic equivalent of integration}, we have that
  \begin{align}
    \label{eq: regularly part in the integration}
    & \big\langle \psi_0 \big(\cdot,( 1 + \epsilon_{R(u)} ) u \phi \big), \phi^* \big\rangle_m
    \\ & \quad = \big\langle \kappa (x)\big( 1 + \epsilon_{R(u)}(x)\big )^{\gamma(x)} u^{\gamma(x)} \phi(x)^{\gamma(x)} , \phi^*(x) \big\rangle_{m(dx)}
    \\ & \quad \stackrel[u\to 0]{}{\sim} \langle u^{\gamma(x)} , \kappa (x)\phi(x)^{\gamma(x)} \phi^*(x) \rangle_{m(dx)}.
  \end{align}
  According to Lemma \ref{lem:regularly_variation_and_integration}, and using the fact that $\kappa(x)\phi(x)^{\gamma(x)}$ is bounded and the measure $\phi^* dm$ is finite, we have that $\langle \psi_0\big(\cdot,(1+\epsilon_{R(u)})u\phi \big), \phi^* \rangle_m$ is regularly varying at $u = 0$ with index $\gamma_0$.
  Noticing that $-(\gamma_0 - 1) < 0$, according to Corollary \ref{cro: power law and ingetration} and \eqref{eq: integral equation for R}, $R$ is regularly varying at $0$ with index $-(\gamma_0 - 1)$.
  Therefore, from $R(\langle v_s, \phi^*\rangle_m) = s$ and Corollary \ref{cro: regularly varing and inverse with alpha < 0}, we have that $(\langle v_s, \phi^*\rangle_m)_{s\in (0,\infty)}$ is regularly varying at $\infty$ with index $-(\gamma_0 - 1)^{-1}$.
	
  Further, if $m\{x: \gamma(x) = \gamma_0\}> 0$, then according to Lemma \ref{lem:regularly_variation_and_integration} and \eqref{eq: regularly part in the integration}, we know that
  \begin{align}
    & \big\langle \psi_0 \big(\cdot,( 1 + \epsilon_{R(u)} ) u \phi \big), \phi^* \big\rangle_m
      \stackrel[u\to 0]{}{\sim}  \langle u^{\gamma(x)} , \kappa (x)\phi(x)^{\gamma(x)} \phi^*(x) \rangle_{m(dx)}
    \\ & \quad \stackrel[u\to 0]{}{\sim}  \langle \mathbf 1_{\gamma(x)= \gamma_0}, \kappa (x)\phi(x)^{\gamma_0} \phi^*(x) \rangle_{m(dx)} u^{\gamma_0}
         =: C_X u^{\gamma_0}.
  \end{align}
  Therefore, we have $\big\langle \psi_0 \big(\cdot,( 1 + \epsilon_{R(u)} ) u \phi \big), \phi^* \big\rangle_m^{-1} = u^{-\gamma_0} l(u)$, where $l(u)$ converges to the constant $C_X^{-1}$ when $u \to 0$.
  Now according to Corollary \ref{cro: power law and ingetration} and \eqref{eq: integral equation for R} we have that
  \begin{align}
    R(r)
    & = \int_r^\infty \big\langle \psi_0 \big(\cdot,( 1 + \epsilon_{R(u)} ) u \phi \big), \phi^* \big\rangle_m^{-1} du
      = \int_r^\infty u^{-\gamma_0} l(u) du
    \\ & = -\frac{1}{\gamma_0-1}\int_r^\infty l(u) du^{-(\gamma_0 - 1)}
    \\ & \stackrel[r\to 0]{}{\sim} C_X^{-1} (\gamma_0-1)^{-1} r^{-(\gamma_0 - 1)}.
  \end{align}
  Finally since $r\mapsto \langle v_r,\phi^*\rangle_m$ is the inverse of $r\mapsto R(r)$, from \cite[Proposition 1.5.15.]{BinghamGoldieTeugels1989Regular} and the above, we have
  \begin{align}
    \langle v_r,\phi^*\rangle_m
    \stackrel[r\to \infty]{}{\sim} \big(C_X (\gamma_0-1) r \big)^{-\frac{1}{\gamma_0 - 1}}.
    \qedhere
  \end{align}
\end{proof}

\subsection{Characterization of the one dimensional distribution}
\label{sec: conditional distribution}
Let $\{(X_t)_{t\geq 0}; \mathbf P\}$ be a $(\xi, \psi)$-superprocess satisfying Assumptions \ref{asp: 1}--\ref{asp: 4}.
Suppose  $m(x: \gamma(x) = \gamma_0)>0$.
Recall that we want to find a proper normalization $(\eta_t)_{t\geq 0}$ such that $\big\{\big(\eta_t X_t(f))_{t \geq 0}; \mathbf P_\mu(\cdot | \|X_t\| \neq 0\big)\big\}$ converges weakly to a non-degenerate distribution for a large class of functions $f$ and initial configurations $\mu$.
Our guess of $(\eta_t)$ is
\begin{align}
  \label{eq: definition of eta}
	\eta_t
	:= (C_X(\gamma_0 - 1) t)^{-\frac{1}{\gamma_0 - 1}},
	\quad t\geq 0,
\end{align}
because in this case
\begin{align}
 	\mathbf P_{\delta_x}[\eta_t X_t(f)|\|X_t\|\neq 0]
	= \frac{\mathbf P_{\delta_x}[\eta_t X_t(f) \mathbf 1_{\|X_t\|\neq 0}]} {\mathbf P_{\delta_x}(\|X_t\|\neq 0) }
	= \frac{\eta_t}{\mathbf P_{\delta_x}(\|X_t\|\neq 0)} P^\beta_t f(x)
	\stackrel[t\to \infty]{}{\sim}  \langle f,\phi^* \rangle_m.
\end{align}
Here we have used Theorem \ref{thm: main theorem}(2) and the fact that (see \eqref{eq:q(t,x,y)})
\begin{align}
 	P^\beta_t f(x)
 	= \int_E p_t^\beta(x,y)f(y)dy
 	\xrightarrow[t\to \infty]{}  \phi(x) \langle f,\phi^*\rangle_m.
\end{align}

From the point of view of Laplace transforms, the desired result that, for any $f\in \mathscr B^+_b(E)$ and $\mu\in \mathcal M_E^1$, $\big\{\big(\eta_t X_t(f)\big)_{t \geq 0}; \mathbf P_{\mu}(\cdot | \|X_t\| \neq 0)\big\}$ converge weakly to some probability distribution $F_f$ is equivalent to the following convergence:
\begin{align}
	\mathbf P_\mu[1- e^{-\theta \eta_t X_t(f)} | \|X_t\|\neq 0]
	= \frac{1- \exp\{- \mu\big(V_t(\theta \eta_t f)\big) \}}{\mathbf P_\mu(\|X_t\|\neq 0)}
	\xrightarrow[t\to \infty]{} \int_{[0,\infty)}(1-e^{-\theta u}) F_f(du).
\end{align}
According to Theorem \ref{thm: main theorem}(2) and $1-e^{-x} \stackrel[x\to 0]{}{\sim} x$, this is equivalent to
\begin{align}
  \label{eq: equivalent result}
	\frac{\mu\big( V_t(\theta \eta_t f) \big)}{ \eta_t}
	\quad \xrightarrow[t\to \infty]{} \mu(\phi) \int_{[0,\infty)}(1-e^{-\theta u})F_{f}(du).
\end{align}
Therefore, to establish the weak convergence of $\big\{\big(\eta_t X_t(f)\big)_{t \geq 0}; \mathbf P_{\mu}(\cdot | \|X_t\|\neq 0)\big\}$, one only needs to verify \eqref{eq: equivalent result}.

In order to investigate the convergence of $\mu\big( V_t(\theta \eta_t f) \big)/ \eta_t$, we need to investigate the properties of $\theta\to V_t(\theta f)$.
(Note that \eqref{eq:mean-fkpp} only gives the the dynamics of $t\to V_t(\theta f)$.)
This is done in the following proposition:

\begin{prop}
	For any $f\in \mathscr B^+_b(E),\theta \geq 0,x\in E$ and $T>0$, we have
  \begin{align}
    \label{eq: equation for Vt(theta f) for theta}
    V_T ( \theta f) ( x)
    = \phi( x) \int_0^\theta \Pi_x^{(\phi)} \Big[ \frac{ f(\xi_T) } { \phi(\xi_T) } \exp\Big\{ - \int_0^T \big( \kappa \gamma V_{T-s} (r f)^{ \gamma - 1} \big) ( \xi_s) ds\Big\} \Big] dr.
  \end{align}
\end{prop}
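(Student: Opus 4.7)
My plan is to differentiate the mild equation satisfied by $V_T(\theta f)$ with respect to the scaling parameter $\theta$, solve the resulting linear equation via a time-dependent Feynman-Kac representation, and then convert the $\Pi_x$-expectation into a $\Pi_x^{(\phi)}$-expectation using the $h$-transform that defines the spine. Integrating the resulting identity in $\theta$ from $0$ to the target value, together with $V_T(0)=0$, will yield \eqref{eq: equation for Vt(theta f) for theta}.

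In more detail, substituting $\theta f$ for $f$ in \eqref{eq:mean-fkpp} and recalling $\psi_0(x,z)=\kappa(x)z^{\gamma(x)}$ gives
\begin{align}
V_T(\theta f)(x) + \int_0^T P^\beta_{T-r}\psi_0\big(\cdot,V_r(\theta f)\big)(x)\,dr = \theta\,P^\beta_T f(x).
\end{align}
Since $\theta\mapsto V_T(\theta f)(x)$ is nondecreasing, convex and dominated by $\theta\, P^\beta_T f(x)$, one can differentiate this identity in $\theta$ (justified by monotone/dominated convergence together with the boundedness of $\kappa,\gamma$ and of $V_r(\theta f)$ on compact $\theta$-intervals). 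Writing $u_\theta(T,x):=\partial_\theta V_T(\theta f)(x)$, the chain rule with $\partial_z\psi_0(x,z)=\kappa(x)\gamma(x)z^{\gamma(x)-1}$ yields the linear mild equation
\begin{align}
u_\theta(T,x) + \int_0^T P^\beta_{T-r}\big[\kappa\gamma V_r(\theta f)^{\gamma-1}\,u_\theta(r,\cdot)\big](x)\,dr = P^\beta_T f(x),
\end{align}
with $u_\theta(0,\cdot)=f$. This is the mild form of a linear parabolic equation for $u_\theta$ with time- and space-dependent killing rate $\kappa\gamma V_T(\theta f)^{\gamma-1}$. Reversing time on $[0,T]$ turns it into a backward equation with terminal data $f$, whose standard Feynman-Kac representation gives
\begin{align}
u_\theta(T,x) = \Pi_x\Big[f(\xi_T)\exp\Big\{\int_0^T\beta(\xi_s)\,ds - \int_0^T\big(\kappa\gamma V_{T-s}(\theta f)^{\gamma-1}\big)(\xi_s)\,ds\Big\}\mathbf 1_{T<\zeta}\Big].
\end{align}

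The final step is the $h$-transform defining $\Pi_x^{(\phi)}$: for any nonnegative $\mathscr F_T^\xi$-measurable functional $G$,
\begin{align}
\Pi_x\big[G\cdot \phi(\xi_T)e^{\int_0^T\beta(\xi_s)\,ds}\mathbf 1_{T<\zeta}\big] = \phi(x)\,\Pi_x^{(\phi)}[G].
\end{align}
Taking $G = f(\xi_T)/\phi(\xi_T)\cdot \exp\{-\int_0^T(\kappa\gamma V_{T-s}(\theta f)^{\gamma-1})(\xi_s)\,ds\}$ converts the formula for $u_\theta(T,x)$ into
\begin{align}
u_\theta(T,x) = \phi(x)\,\Pi_x^{(\phi)}\Big[\frac{f(\xi_T)}{\phi(\xi_T)}\exp\Big\{-\int_0^T\big(\kappa\gamma V_{T-s}(\theta f)^{\gamma-1}\big)(\xi_s)\,ds\Big\}\Big].
\end{align}
Integrating both sides with respect to $\theta$ on $[0,\theta]$ (relabeling the integration variable as $r$) and using $V_T(0)=0$ produces exactly \eqref{eq: equation for Vt(theta f) for theta}. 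I expect the main technical hurdle to be the rigorous justification of the differentiation in $\theta$ and of Feynman-Kac with a time-dependent potential; both are accessible because $\kappa,\gamma,\beta$ are bounded and $V_r(\theta f)\leq \theta\,P^\beta_r f$ is uniformly bounded on compact $\theta$-intervals, making the integrands bounded and allowing standard limiting arguments.
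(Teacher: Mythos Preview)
Your approach is correct but genuinely different from the paper's. The paper proceeds probabilistically via the spine machinery: it writes
\[
\frac{\partial}{\partial\theta}\bigl(-\log\mathbf P_{\delta_x}[e^{-\theta X_T(f)}]\bigr)
= P^\beta_T f(x)\,\dot{\mathbf P}_x^{(T,f)}[e^{-\theta Y_T(f)}]
\]
by combining the size-biased decomposition (Theorem~\ref{thm: size-biased decomposition}) with the spine representation (Theorem~\ref{thm: spine representation}), then computes the Laplace functional of the immigration process $Y_T$ conditionally on the spine using Campbell's formula for the Poisson random measure $\mathbf n_T$, obtaining the exponential of $-\int_0^T(\kappa\gamma V_{T-s}(\theta f)^{\gamma-1})(\xi_s)\,ds$. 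The passage from $\Pi_x^{(T,f)}$ to $\Pi_x^{(\phi)}$ is the same $h$-transform step you use.

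Your route is analytic: you differentiate the semilinear mild equation \eqref{eq:mean-fkpp} in the parameter $\theta$, obtain a \emph{linear} mild equation for $u_\theta=\partial_\theta V_T(\theta f)$ with time-dependent killing rate $\kappa\gamma V_\cdot(\theta f)^{\gamma-1}$, and solve it by a time-inhomogeneous Feynman--Kac formula. This avoids invoking the spine decomposition theorems entirely, at the price of having to justify (i) the interchange of $\partial_\theta$ with $P^\beta_{T-r}$ and the $dr$-integral, and (ii) the Feynman--Kac representation for a bounded time-dependent potential. Both are routine here since $V_r(\theta f)\le \theta\,P^\beta_r f$ and $\kappa,\gamma,\beta,\phi$ are bounded. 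One small correction: $\theta\mapsto V_T(\theta f)(x)=-\log\mathbf P_{\delta_x}[e^{-\theta X_T(f)}]$ is \emph{concave}, not convex (its second derivative is $\le 0$ by Cauchy--Schwarz); this does not affect your argument, as concavity equally yields a.e.\ differentiability, and in fact the derivative exists everywhere from the explicit expression $\mathbf P_{\delta_x}[X_T(f)e^{-\theta X_T(f)}]/\mathbf P_{\delta_x}[e^{-\theta X_T(f)}]$. The paper's approach has the advantage that the Feynman--Kac step is already packaged inside the spine/Campbell machinery used throughout; yours is more self-contained and PDE-flavored.
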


\begin{proof}
	It follows from Theorems \ref{thm: size-biased decomposition} and \ref{thm: spine representation} that
  \begin{align}
    \frac{ \mathbf P_{\delta_x}[X_T(f)e^{-\theta X_T(f)}] } {  \mathbf P_{\delta_x} [X_T(f)] }
    = \mathbf P_{\delta_x}^{X_T(f)} [e^{-\theta X_T(f)}]
    = \mathbf P_{\delta_x} [e^{-\theta X_T(f)}] \dot {\mathbf P}_x^{(T,f)}[e^{-\theta Y_T(f)}],
  \end{align}
  where $\{(\xi)_{0\le t\le T}, \mathbf n_T, (Y)_{0\le t\le T}; \dot {\mathbf P}^{(f,T)}_x\}$ is the spine representation of $\mathbb N^{W_T(f)}_x$ with $\mathbf m^\xi_T$ being the intensity of the immigration measure $\mathbf n_T$ conditioned on $\{(\xi)_{0\le t\le T}; \dot {\mathbf P}^{(f,T)}_x\}$.
  From this, we have
  \begin{align} 
    \label{eq: dynamic of theta on v_t theta reason 1}
    \frac{\partial}{\partial \theta}
    (-\log \mathbf P_{\delta_x}[e^{-\theta X_T(f)}])
    = \frac{\mathbf P_{\delta_x}[X_T(f)e^{-\theta X_T(f)}]}{\mathbf P_{\delta_x}[e^{-\theta X_T(f)}]}
    = P^\beta_T f(x) \dot {\mathbf P}_x^{(T,f)}[e^{-\theta Y_T(f)}].
  \end{align}
  On the other hand, if we write $F(s,w):= \mathbf 1_{s\leq T} w_{T-s}(f)$, then by Assumption \ref{asp: 4}, the spine representation, Campbell's formula and \eqref{eq: definition of Gamma function}, we have
  \begin{align}
    \label{eq: dynamic of theta on v_t theta reason 2}
    & -\log \dot {\mathbf P}^{(T,f)}_{x}[e^{-\theta \mathbf n_T(F)}|\mathbf m_T^\xi]
      = \mathbf m_T^\xi(1-e^{-\theta F})
    \\ & \quad = \int_0^T ds \int_{(0,\infty)} y \mathbf P_{y\delta_{\xi_s}}[1- e^{-\theta X_{T-s}(f)}] \pi(\xi_s,y)
    \\ & \quad = \int_0^T ds \cdot \kappa(\xi_s) \int_{(0,\infty)} \mathbf (1- e^{- y V_{T-s}(\theta f)(\xi_s)}) \frac{dy}{\Gamma(-\gamma(\xi_s)) y^{\gamma(\xi_s)}}
    \\ & \quad = \int_0^T \big(\kappa\gamma V_{T-s}(\theta f)^{\gamma-1}\big)(\xi_s) ds.
  \end{align}
  Note that, since $\mathbf n_T(F)= Y_T(f)$, we can derive from \eqref{eq: dynamic of theta on v_t theta reason 1} and \eqref{eq: dynamic of theta on v_t theta reason 2} that
  \begin{align}
    V_T(\theta f)(x)
    & = -\log \mathbf P_{\delta_x}[e^{-\theta X_T(f)}]
      = \int_0^\theta P^\beta_Tf(x)
      \dot {\mathbf P}_x^{(T,f)}[e^{-r  Y_T(f)}] dr
    \\ & =P^\beta_Tf(x)\int_0^\theta \Pi_x^{(T,f)} \Big[\exp\Big\{-\int_0^T \big(\kappa\gamma V_{T-s}(r f)^{\gamma-1}\big)(\xi_s)~ds\Big\}\Big] dr
    \\ &= \phi( x) \int_0^\theta \Pi_x^{(\phi)} \Big[ \frac{ f(\xi_T) } { \phi(\xi_T) } \exp\Big\{ - \int_0^T \big( \kappa \gamma V_{T-s} (r f)^{ \gamma - 1} \big) ( \xi_s) ds\Big\} \Big] dr,
  \end{align}
  as required.
\end{proof}

Replacing $\theta$ with $\theta \eta_T$ in \eqref{eq: equation for Vt(theta f) for theta}, we have
\begin{align}
  \label{eq: equation for normalized V_T}
  & \frac{V_T(\theta \eta_T f)(x)}{\eta_T}
	\\ & \quad= \phi(x) \frac{1}{\eta_T}\int_0^{\theta \eta_T} \Pi_x^{(\phi)} \Big[ \frac { f(\xi_T) } { \phi(\xi_T) } \exp\Big\{-\int_0^T \big(\kappa\gamma V_{T-s}(r f)^{\gamma-1}\big)(\xi_s) ds\Big\}\Big] dr
	\\ & \quad = \phi(x) \int_0^{\theta} \Pi_x^{(\phi)} \Big[ \frac { f(\xi_T) } { \phi(\xi_T) }  \exp\Big\{-\int_0^T \big(\kappa\gamma V_{T-s}(r \eta_T f)^{\gamma-1}\big)(\xi_s) ds\Big\}\Big] dr
	\\ & \quad = \phi(x)\int_0^{\theta} \Pi_x^{(\phi)} \Big[\frac{f(\xi_T)}{\phi(\xi_T)} \exp\Big\{-T\int_0^1 \big(\kappa\gamma V_{uT}(r \eta_T f)^{\gamma-1}\big)(\xi_{(1-u)T}) du\Big\}\Big] dr.
\end{align}

\subsection{Proof of Theorem \ref{thm: main theorem}(3)}
\label{sec: proof of result 3}
Consider the $(\xi, \psi)$-superprocess $\{X;\mathbf P\}$ which satisfies Assumptions \ref{asp: 1}--\ref{asp: 4}.
Suppose that $m( x:\gamma(x)=\gamma_0 )>0$.
Let $f \in \mathscr B^+(E)$ be  such that $ \langle f, \phi^* \rangle_m > 0$  and $c_f :=\| \phi^{-1}f \|_\infty < \infty$.

Without loss of generality, we assume that $\langle f, \phi^* \rangle_m = 1$.
We claim that, in order to prove Theorem \ref{thm: main theorem}(3), we only need to show that
\begin{align}
  \label{eq: we only need to proof this}
	g(t,\theta,x)
	:=\frac{V_t (\theta \eta_t f) (x)}{\eta_t \phi(x)}
	\xrightarrow[t\to \infty]{} G(\theta)
	:= \Big( \frac{1}{1+\theta^{-(\gamma_0 - 1)}} \Big)^{ \frac{1}{\gamma_0 - 1} },
	\quad x\in E, \theta \geq 0.
\end{align}
In fact, by \eqref{eq: equation for normalized V_T}, we have $\|V_t(\theta \eta_t f)/ \eta_t \|_\infty \leq \theta \|\phi\|_\infty \|\phi^{-1}f\|_\infty.$
Therefore, if \eqref{eq: we only need to proof this} is true, then by the bounded convergence theorem, for each $\mu \in \mathcal M^1_E$,
\begin{align}
	\frac{\mu\big(V_t (\theta \eta_t f)\big)}{\eta_t }
	\xrightarrow[t\to \infty]{} \mu(\phi)G(\theta),
\end{align}
which, by the discussion in Subsection \ref{sec: conditional distribution}, is equivalent to Theorem \ref{thm: main theorem}(3).

From Lemma \ref{lem: characterize the general Mittag-Leffler distribution}, we have that $G$ satisfies
\begin{align}
  \label{eq: equation for G}
	G(\theta)
	= \int_0^\theta e^{ - \frac{1} {\gamma_0 - 1} J_G(r)} dr,
	\quad \theta \geq 0,
\end{align}
where
\begin{align}
  \label{eq: definition for J_G}
	J_G(r):=
	\gamma_0 \int_0^1 G(ru^{\frac{1}{\gamma_0 - 1}}) ^{\gamma_0 - 1}\frac{du}{u},
	\quad r\geq 0 .
\end{align}
According to \eqref{eq: equation for normalized V_T}, we know that $g$ satisfies
\begin{align}
  \label{eq: equation for g}
	g(t,\theta, x)
  = \int_0^{\theta} \Pi_x^{(\phi)} [ (\phi^{-1}f)(\xi_t) e^{-\frac{1}{\gamma_0 - 1} J_g(t,r,\xi) } ] dr,
	\quad t\geq 0, \theta \geq 0, x\in E,
\end{align}
where, for each $t\geq 0$ and $r\geq 0$,
\begin{align}
  \label{eq: definition for J_g}
	J_g(t,r,\xi)
  :=(\gamma_0 - 1)t\int_0^1 \big(  \kappa \gamma \cdot (   \phi \eta_{ut}  )^{\gamma - 1} g (ut,ru^{\frac{1}{\gamma_0 - 1}},\cdot )^{\gamma-1}  \big) (  \xi_{(1-u)t}  ) du.
\end{align}
For each $t\geq 0$ and $r\geq 0$, define
\begin{align}
  \label{eq: definition of J'_G}
	J'_G(t,r,\xi)
  :=\gamma_0 (\gamma_0 - 1) t \int_0^1 \big( \mathbf 1_{\gamma(\cdot) = \gamma_0} \kappa \cdot (\phi \eta_{ut})^{\gamma_0 - 1} G\big( ru^{\frac{1}{\gamma_0 - 1}} \big) ^{\gamma_0 - 1} \big) (\xi_{(1-u)t}) du
\end{align}
and
\begin{equation}
  \label{eq: definition of J'_g}
	J'_g(t,r,\xi)
  := \gamma_0 (\gamma_0 - 1) t \int_0^1 \big( \mathbf 1_{\gamma(\cdot) = \gamma_0} \kappa \cdot (\phi \eta_{ut})^{\gamma_0 - 1} g\big( ut,ru^{\frac{1}{\gamma_0 - 1}}, \cdot \big)^{\gamma_0 - 1}  \big) (\xi_{(1-u)t})  du.
\end{equation}
The main idea is to show that $J_G,J'_G,J_g$ and $J'_g$ are approximately equal in some sense when $t\to \infty$.

Step 1:
We will give upper bounds for $G,g, J_G, J'_G, J_g$ and $J'_g$ respectively.
From \eqref{eq: equation for G} we have
\begin{align}
  \label{eq: upper bound for G}
	G(r)
	\leq r,
	\quad r \geq 0.
\end{align}	
From \eqref{eq: definition for J_G} and \eqref{eq: upper bound for G}, we have
\begin{align}
  \label{eq: upper bound for J_G}
	J_G(r)
	\leq \gamma_0 r^{\gamma_0 - 1},
	\quad r \geq 0.
\end{align}
From \eqref{eq: equation for g}, we have
\begin{align}
  \label{eq: upper bound for g}
	g(t,r, x) \leq c_f r,
	\quad t\geq 0, r \geq 0, x\in E.
\end{align}
From \eqref{eq: definition of eta}, \eqref{eq: definition for J_g}, \eqref{eq: upper bound for g} and the fact that $\gamma(\cdot) - 1 < 1$, we have $\Pi^{(\phi)}_{\cdot}$-almost surely
\begin{align}
	J_g(t,r, \xi)
	& \leq \|\kappa \cdot (c_f\phi)^{\gamma - 1} \|_\infty \int_0^1 \big(  t\eta_{ut}^{\gamma - 1} (ru^{\frac{1}{\gamma_0 - 1}} )^{\gamma-1}  \big) \big(  \xi_{(1-u)t} \big) du
	\\ & = \| \kappa \cdot (c_f\phi)^{\gamma - 1} \|_\infty \int_0^1 \big(  r^{\gamma - 1}t^{1-\frac{\gamma - 1}{\gamma_0 - 1}}  \big( C_X (\gamma_0 - 1) \big)^{-\frac{\gamma - 1}{\gamma_0 - 1}}  \big) \big( \xi_{(1-u)t} \big) du
	\\ & \leq \max\{1,r\} \cdot \| \kappa \cdot (c_f\phi)^{\gamma - 1} \|_\infty \Big\|  \big( C_X (\gamma_0 - 1) \big)^{-\frac{\gamma - 1}{\gamma_0 - 1}}\Big\|_\infty
	\\ & =: c_2 \cdot \max  \{1,r\},
       \quad t\geq 1, r\geq 0.
\end{align}
From \eqref{eq: definition of eta}, \eqref{eq: definition of J'_g} and \eqref{eq: upper bound for g}, we have $\Pi_{\cdot}^{(\phi)}$-almost surely
\begin{align}
	J'_g(t,r,\xi)
	& \leq \gamma_0 (\gamma_0 - 1) t \int_0^1 \big( \mathbf 1_{\gamma(\cdot) = \gamma_0} \kappa \cdot (\phi \eta_{ut})^{\gamma_0 - 1} (c_f ru^{\frac{1}{\gamma_0 - 1}})^{\gamma_0 - 1}\big) (\xi_{(1-u)t}) du
	\\ & \leq \gamma_0(\gamma_0 - 1) c_f^{\gamma_0 - 1}r^{\gamma_0 - 1} \|  \mathbf 1_{\gamma(\cdot) = \gamma_0}  \kappa \phi^{\gamma_0 - 1} \|_\infty \int_0^1 t \big( C_X(\gamma_0 - 1) ut \big)^{- 1} u du
	\\ & =: c_3 \cdot r^{\gamma_0 - 1},
       \quad t\geq 0, r\geq 0.
\end{align}
From \eqref{eq: definition of eta}, \eqref{eq: definition of J'_G} and \eqref{eq: upper bound for G}, we have $\Pi^{(\phi)}_\cdot$-almost surely
\begin{align}
  \label{eq: upper bound for J'_G}
	J'_G(t,r,\xi)
	& \leq \gamma_0 (\gamma_0 - 1) t \int_0^1 \big( \mathbf 1_{\gamma(\cdot) = \gamma_0} \kappa \cdot (\phi \eta_{ut})^{\gamma_0 - 1} (ru^{\frac{1}{\gamma_0 - 1}})^{\gamma_0 - 1}\big) (\xi_{(1-u)t}) du
	\\ & \leq \gamma_0(\gamma_0 - 1) r^{\gamma_0 - 1} \big\|   \mathbf 1_{\gamma(\cdot) = \gamma_0}  \kappa \phi^{\gamma_0 - 1} \big\|_\infty \int_0^1 t \big(  C_X(\gamma_0 - 1) ut \big)^{- 1}  u  du
	\\ & =: c_4 \cdot r^{\gamma_0 - 1},
       \quad t\geq 0, r\geq 0.
\end{align}

In the remainder of this proof, we use the following notation:
If $f$ is a measurable function which is $L^p$ integrable on the measure space $(S,\mathscr S,\mu)$ with $p > 0$, then we write
\begin{align}
	\|f\|_{\mu;p}
	:= \Big(\int_{S} |f|^p d\mu \Big)^{\frac{1}{p}}.
\end{align}
Notice that, when $p\geq 1$, $\|f\|_{\mu;p}$ is simply the $L^p$ norm of $f$ with respect to the measure $\mu$.	
However, when $p \in (0,1)$, $\|\cdot\|_{\mu; p}$ is not a norm.

Step 2: 
We will show that, for each $t\geq 0, \theta \geq 0,$ and $x\in E$
\begin{align}
	&| G(\theta)^{\gamma_0 - 1} - g(t,\theta,x)^{\gamma_0 - 1} |
	\\ & \quad \leq I_1(t,\theta,x) +c^{\gamma_0 - 1}_f I_2(t,\theta,x) + c^{\gamma_0 - 1}_f I_3(t,\theta,x) + c^{\gamma_0 - 1}_f I_4(t,\theta,x),
\end{align}
where
\begin{align}
    I_1(t,\theta,x)
    &:= \Big\| e^{ - J_G(r)} - \| (\phi^{-1}f)(\xi_t)^{\gamma_0 - 1} e^{-J_G(r)} \|_{\Pi_x^{(\phi)};\frac{1}{\gamma_0 - 1}} \Big\|_{\mathbf 1_{0\leq r\leq \theta} dr;\frac{1}{\gamma_0 - 1}} ,
    \\I_2(t,\theta,x)
    &:= \Big\|  \|  J_G(r) - J'_G(t,r,\xi)  \|_{\Pi_x^{(\phi)};\frac{1}{\gamma_0 - 1}} \Big\|_{\mathbf 1_{0\leq r\leq \theta} dr;\frac{1}{\gamma_0 - 1}},
    \\I_3(t,\theta,x)
    &:= \Big\| \|  J'_G(t,r,\xi) - J'_g(t,r,\xi)  \|_{\Pi_x^{(\phi)};\frac{1}{\gamma_0 - 1}} \Big\|_{\mathbf 1_{0\leq r\leq \theta} dr;\frac{1}{\gamma_0 - 1}},
\end{align}
and
\begin{align}
	I_4(t,\theta,x)
	:= \Big\| \| J'_g(t,r,\xi) - J_g(t,r,\xi)  \|_{\Pi_x^{(\phi)};\frac{1}{\gamma_0 - 1}} \Big\|_{\mathbf 1_{0\leq r\leq \theta} dr;\frac{1}{\gamma_0 - 1}}.
\end{align}
In fact, we can rewrite \eqref{eq: equation for G} and \eqref{eq: equation for g} as:
\begin{align}
	G(\theta)^{\gamma_0 - 1} =
	\| e^{ - J_G(r)} \|_{\mathbf 1_{0\leq r\leq \theta} dr;\frac{1}{\gamma_0 - 1}},
	\quad \theta \geq 0,
\end{align}	
and
\begin{align}
	g(t,\theta,x)^{\gamma_0 - 1}
	= \Big\| \| (\phi^{-1}f)(\xi_t) ^{\gamma_0 - 1} e^{-J_g(t,r,\xi)} \|_{\Pi_x^{(\phi)};\frac{1}{\gamma_0 - 1}} \Big\|_{\mathbf 1_{0\leq r\leq \theta} dr;\frac{1}{\gamma_0 - 1}},
	\quad t\geq 0, \theta \geq 0, x\in E.
\end{align}	
Therefore, by Minkowski's inequality we have that, for each $t\geq 0, \theta \geq 0$ and $x\in E$,
\begin{align}
	& | G(\theta)^{\gamma_0 - 1} - g(t,\theta,x)^{\gamma_0 - 1} |
	\\ & \quad \leq \Big\| e^{ - J_G(r)} - \| (\phi^{-1}f)(\xi_t)^{\gamma_0 - 1} e^{-J_g(t, r,\xi)} \|_{\Pi_x^{(\phi)};\frac{1}{\gamma_0 - 1}} \Big\|_{\mathbf 1_{0\leq r\leq \theta} dr;\frac{1}{\gamma_0 - 1}}
	\\ & \quad \leq I_1(t,\theta,x) + \Big\| \| (\phi^{-1}f)(\xi_t)^{\gamma_0 - 1} e^{-J_G(r)} \|_{\Pi_x^{(\phi)};\frac{1}{\gamma_0 - 1}} -
	\\ & \quad \qquad \qquad \qquad \qquad \qquad \| (\phi^{-1}f)(\xi_t)^{\gamma_0 - 1} e^{-J_g(t,r,\xi)} \|_{\Pi_x^{(\phi)};\frac{1}{\gamma_0 - 1}} \Big\|_{\mathbf 1_{0\leq r\leq \theta} dr;\frac{1}{\gamma_0 - 1}}
	\\ & \quad \leq I_1(t,\theta,x) + \Big\| \|  (\phi^{-1}f)(\xi_t)^{\gamma_0 - 1} ( e^{-J_G(r)} - e^{-J_g(t,r,\xi)} )  \|_{\Pi_x^{(\phi)};\frac{1}{\gamma_0 - 1}} \Big\|_{\mathbf 1_{0\leq r\leq \theta} dr;\frac{1}{\gamma_0 - 1}}
	\\ & \quad \leq I_1(t,\theta,x) + c_f^{\gamma_0 - 1}\Big\| \|  J_G(r) -J_g(t,r,\xi)  \|_{\Pi_x^{(\phi)};\frac{1}{\gamma_0 - 1}} \Big\|_{\mathbf 1_{0\leq r\leq \theta} dr;\frac{1}{\gamma_0 - 1}}
	\\ & \quad \leq I_1(t,\theta,x) + c_f^{\gamma_0 - 1} I_2(t,\theta,x) +c_f^{\gamma_0 - 1} I_3(t,\theta,x)+c_f^{\gamma_0 - 1} I_4(t,\theta,x).
\end{align}	

Step 3: 
We will show that, for each $\theta \geq 0$ and $x\in E$, $I_1(t,\theta,x) \xrightarrow[t\to \infty]{} 0$.
Notice that, by \eqref{eq:q(t,x,y)} and since $\langle f,\phi^* \rangle_m = 1$,
\begin{align}
	\Pi_x^{(\phi)} [(\phi^{-1}f)(\xi_t)]
	= \phi(x)^{-1}\Pi_x[f(\xi_t) e^{- \int_0^t \beta(\xi_s) ds}]
	= \phi(x)^{-1} P^\beta_t f(x)
	\xrightarrow[t\to \infty]{} 1,
	\quad x\in E.
\end{align}
Therefore,
\begin{align}
	& e^{ - J_G(r)} - \| (\phi^{-1}f)(\xi_t)^{\gamma_0 - 1} e^{-J_G(r)} \|_{\Pi_x^{(\phi)};\frac{1}{\gamma_0 - 1}}
	\\ & \quad =e^{ - J_G(r)} \Big( 1   -  \Pi_x^{(\phi)}[ (\phi^{-1}f)(\xi_t) ]^{\gamma_0 - 1}   \Big)
       \xrightarrow[t\to \infty]{} 0,
       \quad x\in E, r\geq 0.
\end{align}
We also have the following bound:
\begin{align}
	\Big| e^{ - J_G(r)} - \| (\phi^{-1}f)(\xi_t)^{\gamma_0 - 1} e^{-J_G(r)} \|_{\Pi_x^{(\phi)};\frac{1}{\gamma_0 - 1}} \Big|
	\leq 1+ c_f^{\gamma_0 - 1}.
\end{align}
Therefore, by the bounded convergence theorem, we have that, for each $\theta \geq 0$ and $x\in E$, $I_1(t,\theta, x) \xrightarrow[t\to \infty]{} 0$.

Step 4: 
We will show that, for each $\theta \geq 0$ and $x\in E$, $I_2(t,\theta,x) \xrightarrow[t\to \infty]{} 0$.
Notice that, according to \eqref{eq: definition for J_G} and \eqref{eq: definition of J'_G}, for each $t\geq 0$ and $r\geq 0$,
\begin{align}
	& J_G(r) - J'_G(t,r,\xi)
	\\ & \quad = \int_0^1 \gamma_0 G\big( ru^{\frac{1}{\gamma_0 - 1}} \big) ^{\gamma_0 - 1} \big( 1- (\gamma_0 - 1) \mathbf 1_{\gamma(\cdot) = \gamma_0} \kappa \phi^{\gamma_0 - 1} tu\eta_{ut}^{\gamma_0 - 1} \big)(\xi_{(1-u)t}) \frac{du}{u}
	\\ & \quad = \int_0^1 \gamma_0 G\big( ru^{\frac{1}{\gamma_0 - 1}} \big) ^{\gamma_0 - 1} \big( 1- C_X^{-1}\mathbf 1_{\gamma(\cdot) = \gamma_0} \kappa \phi^{\gamma_0 - 1} \big)(\xi_{(1-u)t}) \frac{du}{u}.
\end{align}
Also notice that, according to \eqref{eq: upper bound for G}, for each $r \geq 0$, $u\in [0,1]$ and $x\in E$,
\begin{align}
	& \big| \gamma_0 G\big( ru^{\frac{1}{\gamma_0 - 1}} \big) ^{\gamma_0 - 1} \big( 1- C_X^{-1}\mathbf 1_{\gamma(\cdot) = \gamma_0} \kappa \phi^{\gamma_0 - 1} \big)(x) \frac{1}{u} \big|
	\\ & \quad \leq \frac{\gamma_0}{u} G\big( ru^{\frac{1}{\gamma_0 - 1}} \big) ^{\gamma_0 - 1} \big|\big( 1- C_X^{-1}\mathbf 1_{\gamma(\cdot) = \gamma_0} \kappa \phi^{\gamma_0 - 1} \big)(x) \big|
	\\ & \quad \leq \gamma_0r^{\gamma_0 - 1} \big( 1+ \big\|C_X^{-1}\mathbf 1_{\gamma(\cdot) = \gamma_0} \kappa \phi^{\gamma_0 - 1} \big\|_\infty \big).
\end{align}
Therefore, according to Lemma \ref{lem: ergodicity of the underlying process} and the definition of $C_X$, we have that, for each $r\geq 0$ and $x\in E$,
\begin{align}
	J_G(r) - J'_G(t,r,\xi)
	\xrightarrow[t\to \infty]{L^2(\Pi_x^{(\phi)})}
	\int_0^1 \frac{\gamma_0}{u} G\big( ru^{\frac{1}{\gamma_0 - 1}} \big) ^{\gamma_0 - 1} \big\langle 1- C_X^{-1}\mathbf 1_{\gamma(\cdot) = \gamma_0} \kappa \phi^{\gamma_0 - 1}, \phi\phi^*\big\rangle_m du
	=0.
\end{align}
According to \eqref{eq: upper bound for J_G} and \eqref{eq: upper bound for J'_G}, we have that, for each $r\geq 0$ and $t\geq 0$,
\begin{align} 
  \label{eq: upper bound for J_G - J'_G}
	\big| J_G(r) - J'_G(t,r,\xi)\big|
	\leq (\gamma_0 + c_4) r^{\gamma_0 - 1}.
\end{align}
Therefore, according to the bounded convergence theorem, we have that, for each $r\geq 0$ and $x\in E$,
\begin{align}
  \big\|  J_G(r) - J'_G(t,r,\xi)  \big\|_{\Pi_x^{(\phi)};\frac{1}{\gamma_0 - 1}}
  \xrightarrow[t\to \infty]{} 0.
\end{align}
According to \eqref{eq: upper bound for J_G - J'_G}, we have that, for each $\theta \geq 0$, $r\in [0,\theta]$ and $x\in E$,
\begin{align}
	\big\|  J_G(r) - J'_G(t,r,\xi)  \big\|_{\Pi_x^{(\phi)};\frac{1}{\gamma_0 - 1}}
	\leq (\gamma_0 + c_4) \theta^{\gamma_0 - 1}.
\end{align}
Finally, according to the bounded convergence theorem, we have that, for each $\theta\geq 0$ and $x\in E$, $I_2(t,\theta,x)\xrightarrow[t\to \infty]{} 0$.

Step 5: 
We will show that, for each $\theta \geq 0$ and $x\in E$, $I_4(t,\theta,x) \xrightarrow[t\to \infty]{} 0$.
We first note that, for each $t\geq 0$ and $r\geq 0$, we have
\begin{align}
  \label{eq: expression for J_g - J'_g}
	J_g(t,r,\xi) - J'_g(t,r,\xi)
	= (\gamma_0 - 1)t\int_0^1 \big( \mathbf 1_{\gamma(\cdot )> \gamma_0}  \kappa\gamma \cdot (   \phi \eta_{ut} )^{\gamma - 1} g (ut,ru^{\frac{1}{\gamma_0 - 1}},\cdot )^{\gamma-1}  \big) \big(  \xi_{(1-u)t} \big) du.
\end{align}
We then note that, according \eqref{eq: upper bound for g} and the definition of $\eta_t$, for each $r\geq 0$, $u\in (0,1)$ and $x\in E$, we have
\begin{align}
  \label{eq: integer in the expression of J_g - J'_g convergences to 0}
	& (\gamma_0 - 1)t  \mathbf 1_{\gamma(x)> \gamma_0}  \kappa(x)\gamma(x) \big(   \phi(x) \eta_{ut}   \big)^{\gamma(x) - 1} g \big(ut,ru^{\frac{1}{\gamma_0 - 1}},x \big)^{\gamma(x)-1}
	\\ & \quad \leq (\gamma_0 - 1) \big\| \kappa \gamma \cdot (c_f r\phi)^{\gamma - 1}\big\|_\infty \mathbf 1_{\gamma(x) > \gamma_0} t \eta_{ut}^{\gamma(x) - 1} u^{\frac{\gamma(x) - 1}{\gamma_0 - 1}}
	\\ & \quad = (\gamma_0 - 1) \big\| \kappa \gamma \cdot (c_f r\phi)^{\gamma - 1}\big\|_\infty \mathbf 1_{\gamma(x) > \gamma_0} t \big( C_X(\gamma_0 - 1) ut\big)^{-\frac{\gamma(x) - 1}{\gamma_0 - 1}} u^{\frac{\gamma(x) - 1}{\gamma_0 - 1}}
	\\ & \quad \leq (\gamma_0 - 1) \mathbf 1_{\gamma(x) > \gamma_0} t^{1-\frac{\gamma(x) - 1}{\gamma_0 - 1}}\big\| \kappa \gamma \cdot (c_f r\phi)^{\gamma - 1}\big\|_\infty \sup_{x\in E} \big( C_X(\gamma_0 - 1) \big)^{-\frac{\gamma(x) - 1}{\gamma_0 - 1}}
	\\ & \quad \xrightarrow[t\to \infty]{} 0.
\end{align}
This also gives an upper bound: For each $r\geq 0$, $u \in (0,1)$, $x\in E$ and $t\geq 1$, we have
\begin{align}
  \label{eq: upper bound for the integrator of J_g - J'_g}
	& (\gamma_0 - 1)t  \mathbf 1_{\gamma(x)> \gamma_0}  \kappa(x)\gamma(x) \big(   \phi(x) \eta_{ut}   \big)^{\gamma(x) - 1} g \big(ut,ru^{\frac{1}{\gamma_0 - 1}},x \big)^{\gamma(x)-1}
	\\ & \quad \leq (\gamma_0 - 1) \big\| \kappa \gamma \cdot (c_f r\phi)^{\gamma - 1}\big\|_\infty \sup_{x\in E} \big( C_X(\gamma_0 - 1) \big)^{-\frac{\gamma(x) - 1}{\gamma_0 - 1}}.
\end{align}
Now, with \eqref{eq: expression for J_g - J'_g}, \eqref{eq: integer in the expression of J_g - J'_g convergences to 0} and \eqref{eq: upper bound for J_g - J'_g}, we can apply Lemma \ref{lem: ergodicity of the underlying process} to the function
\begin{align}
	(y,u,t)
  \mapsto (\gamma_0 - 1)t  \mathbf 1_{\gamma(y)> \gamma_0}  \kappa(y)\gamma(y) \big(   \phi(y) \eta_{ut}   \big)^{\gamma(y) - 1} g \big(ut,ru^{\frac{1}{\gamma_0 - 1}},y \big)^{\gamma(y)-1},
\end{align}
which says that, for each $r\geq 0$,
\begin{align}
	J_g(t,r,\xi) - J'_g(t,r,\xi)
  \xrightarrow[t\to \infty]{L^2(\Pi_x^{(\phi)})} 0.
\end{align}
According to \eqref{eq: expression for J_g - J'_g} and \eqref{eq: upper bound for the integrator of J_g - J'_g}, for each $r\geq 0$ and $t\geq 1$, we have that
\begin{equation}
  \label{eq: upper bound for J_g - J'_g}
	\big |	J_g(t,r,\xi) - J'_g(t,r,\xi)  \big |
	\leq (\gamma_0 - 1) \big\| \kappa \gamma \cdot (c_f r\phi)^{\gamma - 1}\big\|_\infty \sup_{x\in E} \big( C_X(\gamma_0 - 1) \big)^{-\frac{\gamma(x) - 1}{\gamma_0 - 1}}.
\end{equation}
Therefore, according to the bounded convergence theorem, for each $r\geq 0$ and $x\in E$, we have that
\begin{align}
	\big\| J'_g(t,r,\xi) - J_g(t,r,\xi)  \big\|_{\Pi_x^{(\phi)};\frac{1}{\gamma_0 - 1}}
	\xrightarrow[t\to \infty]{} 0.
\end{align}
According to \eqref{eq: upper bound for J_g - J'_g}, for each $\theta \geq 0$, $r\in [0,\theta]$ , $t\geq 1$ and $x\in E$, we have that
\begin{align}
	\big\| J'_g(t,r,\xi) - J_g(t,r,\xi)  \big\|_{\Pi_x^{(\phi)};\frac{1}{\gamma_0 - 1}}
	\leq (\gamma_0 - 1) \big\| \kappa \gamma\cdot (c_f \theta \phi)^{\gamma - 1}\big\|_\infty \sup_{x\in E} \big( C_X(\gamma_0 - 1) \big)^{-\frac{\gamma(x) - 1}{\gamma_0 - 1}}.
\end{align}
Therefore, according to the bounded convergence theorem, for each $\theta \geq 0$ and $x\in E$, we have that $I_4(t,\theta, x) \xrightarrow[t\to \infty]{} 0$.

Step 6: 
We will show that
\begin{align}
  \limsup_{t\to \infty} I_3(t,\theta,x)
  \leq \gamma_0 \Big(  \int_0^\theta  \| M(r u^{\frac{1}{\gamma_0 - 1}}) \|_{\mathbf 1_{0\leq u\leq 1}\frac{du}{u};\gamma_0 - 1}  dr\Big)^{\gamma_0 - 1},
  \quad \theta \geq 0, x\in E,
\end{align}
where
\begin{align}
	M(t,r,x)
	:= |G(r)^{\gamma_0 - 1} - g(t,r,x)^{\gamma_0 - 1}|^{\frac{1}{\gamma_0 - 1}},
	\quad t\geq 0, r\geq 0, x\in E,
\end{align}
and
\begin{align}
	M(r,x)
	:= \limsup_{t\to \infty} M(t,r,x);
	\quad M(r):= \sup_{x\in E} M(r,x),
	\quad r\geq 0, x\in E.
\end{align}
Notice that, according to \eqref{eq: upper bound for G} and \eqref{eq: upper bound for g}, we have the following bound:
\begin{align}
  \label{eq: upper bound for M(t,r,x)}
	M(t,r,x)
	\leq |r^{\gamma_0 - 1} + c_f^{\gamma_0 - 1} r^{\gamma_0 - 1} | ^{\frac{1}{\gamma_0 - 1}}
	=: c_6 r,
\end{align}
where the constant $c_6$ is independent of $t$ and $x$.
Therefore, we have
\begin{align}
	M(r,x)
	\leq M(r)
	\leq c_6 r,
	\quad r\geq 0, x\in E.
\end{align}	
From the definition of $J'_G, J'_g$ and $\eta_t$, we have for each $t \geq 0$ and $r\geq 0$,
\begin{align}
	\label{eq: differences between J'_G and J'_g}
	& |J'_G(t,r,\xi) - J'_g(t,r,\xi)|
	\\ & \quad \leq \gamma_0(\gamma_0 - 1) t \int_0^1 \big( \mathbf 1_{\gamma(\cdot) = \gamma_0} \kappa \cdot (\phi \eta_{ut})^{\gamma_0 - 1} M(ut,ru^{\frac{1}{\gamma_0 - 1}},\cdot)^{\gamma_0 - 1}\big)(\xi_{(1-u)t}) du
	\\ & \quad = \gamma_0 C_X^{-1}\int_0^1 \big( \mathbf 1_{\gamma(\cdot) = \gamma_0} \kappa  \phi^{\gamma_0 - 1}  u^{-1} M(ut,ru^{\frac{1}{\gamma_0 - 1}},\cdot)^{\gamma_0 - 1}\big)(\xi_{(1-u)t}) du.
\end{align}
According to \eqref{eq: upper bound for M(t,r,x)}, we have the following upper bound:
\begin{align}
  u^{-1} M(ut,ru^{\frac{1}{\gamma_0 - 1}}, x)
  \leq c_6 ru^{\frac{2-\gamma_0}{\gamma_0 - 1}}
  \leq c_6 r,
  \quad u\in (0,1), r\geq 0, t\geq 0, x\in E.
\end{align}
Therefore, fixing an $r\geq 0$, we can apply Lemma \ref{lem: Fatou-ergodic lemma for the uderlying process} to the function
\begin{align}
	(y,u,t)
	\mapsto \gamma_0 C_X^{-1}\mathbf 1_{\gamma(y) = \gamma_0} \kappa(y)  \phi(y)^{\gamma_0 - 1}  u^{-1} M(ut,ru^{\frac{1}{\gamma_0 - 1}},y)^{\gamma_0 - 1}
\end{align}
since it is a bounded Borel function on $E\times (0,1) \times [0,\infty)$.
Now, according to Lemma \ref{lem: Fatou-ergodic lemma for the uderlying process}, \eqref{eq: differences between J'_G and J'_g} and the definitions of $M(r,x), M(r)$ and $C_X$, we have
\begin{align}
  \label{eq: limsup of J_G - J'_g}
	& \limsup_{t\to \infty} \| J_G'(t,r,\xi) - J'_g(t,r,\xi) \|_{\Pi_x^{\phi};\frac{1}{\gamma_0 - 1}}
	\\ & \quad \leq  \gamma_0 C_X^{-1} \int_0^1 \big\langle \mathbf 1_{\gamma(\cdot) = \gamma_0} \kappa \phi^{\gamma_0 - 1} M(ru^{\frac{1}{\gamma_0 - 1}},\cdot)^{\gamma_0 - 1}, \phi\phi^* \big\rangle_m \frac{du}{u}
	\\ & \quad \leq  \gamma_0  \int_0^1  M(ru^{\frac{1}{\gamma_0 - 1}})^{\gamma_0 - 1} \frac{du}{u}.
\end{align}
We recall the reverse Fatou's lemma in $L^p$ with $p\geq 1$: Let $(f_n)_{n\in \mathbb N}$ be a sequence of non-negative measurable functions defined on a measure space $S$ with $\sigma$-finite measure $\mu$. If there exists a non-negative $L^p(\mu)$-integrable function $g$ on $S$ such that $f_n \leq g$ for all $n$, then according to the classical reverse Fatou's lemma, we have
\begin{align}
	\limsup_{n\to \infty}\big\| f_n \big\|_{\mu;p}
	= \Big ( \limsup_{n\to \infty}  \int f^p_n d\mu \Big)^{\frac{1}{p}}
	\leq  \Big ( \int \limsup_{n\to \infty} f^p_n d\mu \Big)^{\frac{1}{p}}
	= \big\| \limsup_{n\to \infty} f_n \big\|_{\mu;p}.
\end{align}
Using this version of the reverse Fatou's lemma and \eqref{eq: limsup of J_G - J'_g}, we get that
\begin{align}
	& \limsup_{t\to \infty} I_3(t,\theta, x)
   \leq \big\| \limsup_{t\to \infty} \|    J'_G(t,r,\xi) - J'_g(t,r,\xi) \|_{\Pi_x^{(\phi)};\frac{1}{\gamma_0 - 1}} \big\|_{\mathbf 1_{0\leq r\leq \theta} dr;\frac{1}{\gamma_0 - 1}}
	\\ & \quad\leq \Big\| \gamma_0  \int_0^1  M(ru^{\frac{1}{\gamma_0 - 1}})^{\gamma_0 - 1} \frac{du}{u} \Big\|_{\mathbf 1_{0\leq r\leq \theta} dr;\frac{1}{\gamma_0 - 1}}
	\\ & \quad = \gamma_0 \bigg( \int_0^\theta \Big (   \int_0^1  M(ru^{\frac{1}{\gamma_0 - 1}})^{\gamma_0 - 1} \frac{du}{u}   \Big )^{\frac{1}{\gamma_0 - 1}} dr \bigg)^{\gamma_0 - 1}
	\\ & \quad = \gamma_0 \Big(  \int_0^\theta  \| M(r u^{\frac{1}{\gamma_0 - 1}}) \|_{\mathbf 1_{0\leq u\leq 1}\frac{du}{u};\gamma_0 - 1}  dr\Big)^{\gamma_0 - 1},
	\quad \theta \geq 0, x\in E.
\end{align}
	
Step 7: 
We will show that $M(\theta) = 0$ for each $\theta \geq 0$.
We first claim that
\begin{align}
	M(\theta)
	\leq c_M\int_0^\theta  \big\| M(r u^{\frac{1}{\gamma_0 - 1}}) \big\|_{\mathbf 1_{0\leq u\leq 1}\frac{du}{u};\gamma_0 - 1}  dr ,
	\quad \theta \geq 0,
\end{align}
for some constant $c_M > 0$.
In fact, a direct application of Steps 2-6 gives that, for each $t\geq 0$ and $x\in E$:
\begin{align}
	& M(r,x)^{\gamma_0 - 1}
   = \limsup_{t\to \infty} M(t,r,x)^{\gamma_0 - 1}
   = \limsup_{t\to \infty}|G(r)^{\gamma_0 - 1} - g(t,r,x)^{\gamma_0 - 1}|
	\\ & \quad \leq \limsup_{t\to \infty} \big( I_1(t,\theta,x) +c^{\gamma_0 - 1}_f I_2(t,\theta,x) +c^{\gamma_0 - 1}_f I_3(t,\theta,x) + c^{\gamma_0 - 1}_f I_4(t,\theta,x) \big)
	\\ & \quad = c_f^{\gamma_0 - 1} \limsup_{t\to \infty} I_3(t,\theta ,x)
       \leq c_f^{\gamma_0 - 1} \gamma_0 \Big(  \int_0^\theta  \big\| M(r u^{\frac{1}{\gamma_0 - 1}}) \big\|_{\mathbf 1_{0\leq u\leq 1}\frac{du}{u};\gamma_0 - 1}  dr\Big)^{\gamma_0 - 1}.
\end{align}
Therefore, for each $\theta \geq 0$,
\begin{align}
	M(\theta)
	= \sup_{x\in E}  M(r,x)
	\leq c_f \gamma_0^{\frac{1}{\gamma_0 - 1}} \int_0^\theta  \big\| M(r u^{\frac{1}{\gamma_0 - 1}}) \big\|_{\mathbf 1_{0\leq u\leq 1}\frac{du}{u};\gamma_0 - 1}  dr.
\end{align}
According to that $M(\theta) \leq c_6 \theta$ for each $\theta$, we can apply Lemma \ref{lem: F is zero} to the above inequality to get the desired result.

Step 8: 
Finally, $M \equiv 0$ clearly implies that $\lim_{t\to \infty} I_3(t,\theta, x) = 0$, and thus completes the verification of \eqref{eq: we only need to proof this}.

\appendix
\section{}
\subsection{Examples}
\label{sec:examples}

In this Subsection, we briefly recall from \cite{RenSongZhang2015Limit} some examples of Markov processes satisfying Assumptions \ref{asp: 1} and \ref{asp: 3}. 
We will not try to give the most general examples. 
For details and more examples, we refer our readers to \cite{RenSongZhang2015Limit}.

\begin{exa}
  Suppose that $E$ is a finite state space and $m$ is the counting measure on $E$. 
  Let $\xi$ be an irreducible, continuous-time Markov chain. 
  Then the semigroup $(P_t)_{t\ge 0}$ of $\xi$ satisfies Assumptions \ref{asp: 1} and \ref{asp: 3}.
\end{exa}

\begin{exa} 
  Suppose that $E$ is a bounded Lipschitz connected  open set of $\mathbb R^d$ and that $m$ denotes the Lebesgue measure on $E$.
  Let $\xi$ be the subprocess in $E$ of a diffusion process in $\mathbb{R}^d$ corresponding to a uniformly elliptic divergence form second order differential operator.  
  Then the semigroup $(P_t)_{t\ge 0}$ of $\xi$ satisfies Assumptions \ref{asp: 1} and \ref{asp: 3}.
\end{exa}

\begin{exa}
  Suppose that $E$ is the closure of a bounded $C^2$ connected  open set of $\mathbb R^d$ and that $m$ denotes the Lebesgue measure on $E$.
  Let $\xi$ be the reflecting Brownian motion in $E$. 
  Then the semigroup $(P_t)_{t\ge 0}$ of $\xi$ satisfies
  Assumptions \ref{asp: 1} and \ref{asp: 3}. 
\end{exa}

\begin{exa}
  Suppose that $E$ is a bounded   open set of $\mathbb R^d$ and  $m$ denotes the Lebesgue measure on $E$.
  $\xi$ be the subprocesses in $E$ of any of the subordinate Brownian motions studied in \cite{KSV1, KSV2}.  
  Then the semigroup $(P_t)_{t\ge 0}$ of $\xi$ satisfies Assumptions \ref{asp: 1} and \ref{asp: 3}. 
\end{exa}

\begin{exa}
  Suppose $a>2$ is a constant.
  Assume that $E=\mathbb{R}^d$ and $m$ is the Lebesgue measure on $\mathbb{R}^d$.
  Let $\xi$ be a Markov process on $\mathbb{R}^d$ corresponding to the infinitesimal generator $\Delta-|x|^a$. 
  Then the semigroup $(P_t)_{t\ge 0}$ of $\xi$ satisfies Assumption \ref{asp: 1} and \ref{asp: 3}. 
\end{exa}

\begin{exa} 
  Assume that $E=\mathbb{R}^d$ and $m$ is the Lebesgue measure on $\mathbb{R}^d$.
  Suppose that $V$ is a nonnegative and locally bounded function on  $\mathbb{R}^d$ such that there exist $R>0$ and $M\ge 1$ such that for all $|x|>R$,
  \begin{align}
    M^{-1}(1+V(x))\le V(y)\le M(1+V(x)), \qquad y\in B(x, 1),
  \end{align}
  and that
  \begin{align}
    \lim_{|x|\to\infty}\frac{V(x)}{\log|x|}=\infty.
  \end{align}
  Suppose $\beta\in (0, 2)$ is a constant.
  Let $\xi$ be a Markov process on $\mathbb{R}^d$ corresponding to the infinitesimal generator $-(-\Delta)^{\beta/2}-V(x)$. 
  Then the semigroup $(P_t)_{t\ge 0}$ of $\xi$ satisfies
  Assumptions \ref{asp: 1} and \ref{asp: 3}. 
\end{exa}

\begin{exa}
  Suppose that $\beta\in (0, 2)$ and that $\xi^{(1)}=\{\xi^{(1)}_t: t\ge0\}$ is a strictly $\beta$-stable process in $\mathbb{R}^d$. 
  Suppose that, in the case $d\ge 2$, the spherical part $\eta$ of the L\'evy measure $\mu$ of $\xi^{(1)}$ satisfies the following assumption: 
  there exist a positive function $\Phi$ on the unit sphere $S$ in $\mathbb{R}^d$ and $\kappa>1$ such that
  \begin{align}
    \Phi=\frac{d\eta}{d\sigma} \quad \text{and} \quad
    \kappa^{-1}\le \Phi(z)\le \kappa \quad \text{on } S
  \end{align}
  where $\sigma$ is the surface measure on $S$. In the case $d=1$, we assume that the L\'evy
  measure of $\xi^{(1)}$ is given by
  \begin{align}
    \mu(dx)=c_1x^{-1-\beta}1_{\{x>0\}}+ c_2|x|^{-1-\beta}1_{\{x<0\}}
  \end{align}
  with $c_1, c_2>0$. Suppose that $E$ is a bounded open set in $\mathbb{R}^d$ and $m$ is the Lebesgue measure on $E$.
  Let $\xi$ be the process in $E$ obtained by killing $\xi^{(1)}$ upon exiting $E$.
  Then the semigroup $(P_t)_{t\ge 0}$ of $\xi$ satisfies
  Assumptions \ref{asp: 1} and \ref{asp: 3}.
\end{exa}

\subsection{Analytical results}
\label{sec: Characterizing the Zolotarev's distribution using an non-linear delay equation}
In this Subsection, we give the proofs of the two lemmas used in the proof of Theorem \ref{thm: main theorem}(3). We think these two lemmas are of independent interest.

We first recall the following notation:
If $f$ is a measurable function which is $L^p$ integrable on the measure space $(S,\mathscr S,\mu)$ with $p > 0$, then we write
\begin{align}
	\|f\|_{\mu;p}
	:= \Big(\int_{S} |f|^p d\mu \Big)^{\frac{1}{p}}.
\end{align}
Notice that, when $p\geq 1$, $\|f\|_{\mu;p}$ is simply the $L^p$ norm of $f$ with respect to the measure $\mu$.	
However, when $p \in (0,1)$, $\|\cdot\|_{\mu; p}$ is not a norm.

\begin{lem}
  \label{lem: F is zero}
  Suppose that $\alpha \in (1,2)$.
  Suppose that $F$ is a non-negative function on $[0,\infty)$ satisfying the property that there exists a constant $C>0$ such that  $F(\theta) \leq C\theta$ for all $\theta \geq 0$ and
  \begin{align}
    \label{eq:Gronwall_inequlity}
    F(\theta)
    \leq C \int_0^\theta \|  F(ru^{ \frac{1}{\alpha - 1}  })\|_{\mathbf 1_{0<u<1}\frac{du}{u}; \alpha - 1} dr, \quad \theta \geq 0.
  \end{align}
	Then $F \equiv 0$.
\end{lem}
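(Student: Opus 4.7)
The plan is to reduce the stated inequality to a classical Gronwall-type iteration by first removing the mixed norm, and then showing by induction that $F$ is dominated by $K_n \theta^n$ with $K_n$ decaying faster than any geometric sequence.

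First I would perform the change of variables $s = r u^{1/(\alpha-1)}$ in the inner integral. Since $\tfrac{du}{u} = (\alpha-1)\tfrac{ds}{s}$ and $s$ ranges over $(0, r)$ as $u$ ranges over $(0,1)$, we get
\begin{align}
\|F(ru^{1/(\alpha-1)})\|_{\mathbf{1}_{0<u<1}\frac{du}{u};\alpha-1}
= (\alpha-1)^{1/(\alpha-1)} \Big(\int_0^r F(s)^{\alpha-1}\frac{ds}{s}\Big)^{1/(\alpha-1)}.
\end{align}
Setting $c_0 := C (\alpha-1)^{1/(\alpha-1)}$, the hypothesis \eqref{eq:Gronwall_inequlity} becomes the cleaner form
\begin{align}
F(\theta) \leq c_0 \int_0^\theta \Big(\int_0^r F(s)^{\alpha-1}\frac{ds}{s}\Big)^{1/(\alpha-1)} dr, \quad \theta \geq 0.
\end{align}

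Next I would argue by induction that for every integer $n \geq 1$ there exists a constant $K_n > 0$ with $F(\theta) \leq K_n \theta^n$ for all $\theta \geq 0$. The base case $n=1$ is the assumption $F(\theta) \leq C\theta$. For the inductive step, plugging $F(s) \leq K_n s^n$ into the reduced inequality gives $\int_0^r F(s)^{\alpha-1} \tfrac{ds}{s} \leq \tfrac{K_n^{\alpha-1}}{n(\alpha-1)} r^{n(\alpha-1)}$, and then one more integration yields
\begin{align}
F(\theta) \leq \frac{C K_n}{n^{1/(\alpha-1)}(n+1)}\, \theta^{n+1}
=: K_{n+1}\, \theta^{n+1}.
\end{align}
Unwinding the recursion gives $K_n = C^n / \bigl(((n-1)!)^{1/(\alpha-1)} \cdot n!\bigr)$, and since $1/(\alpha-1) > 1$, this is bounded above by $C^n/(n!)^2$.

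Finally, for any fixed $\theta \geq 0$ the sequence $K_n \theta^n \leq C^n\theta^n/(n!)^2$ tends to $0$ as $n \to \infty$, forcing $F(\theta) = 0$. I do not expect a serious obstacle here: the only delicate point is the change-of-variables that removes the mixed Lebesgue norm in $u$, after which the problem reduces to a standard iterative Gronwall-type argument with super-exponential constants. The exponent $1/(\alpha-1) > 1$ (coming from $\alpha < 2$) is exactly what makes $K_n$ decay fast enough; the argument would not work if $\alpha-1 \geq 1$, but in the regime of interest there is comfortable room to spare.
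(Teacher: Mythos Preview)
Your proof is correct and follows essentially the same iterative Gronwall approach as the paper: both show inductively that $F(\theta)\le K_n\theta^n$ with constants decaying to zero, the only difference being your preliminary change of variables (which the paper skips, working directly with the $u$-integral and obtaining the slightly cruder bound $K_n=C^n/n!$). One minor slip: the inequality $K_n\le C^n/(n!)^2$ fails for small $n$, but since already $K_n\le C^n/n!$ (just drop the factor $((n-1)!)^{1/(\alpha-1)}\ge 1$) the conclusion is unaffected, and your closing remark that $\alpha<2$ is ``exactly what makes $K_n$ decay fast enough'' is misleading for the same reason.
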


\begin{proof}
  We claim that for each $k \in \mathbb N$, we have
  \begin{align}
    \label{eq:upperbound_for_F}
    F(\theta) \leq \frac{C^k \theta^k}{k!},
    \quad \theta \geq 0.
  \end{align}
  In fact, when $k = 1$ this is trival. Now if \eqref{eq:upperbound_for_F} is true for a fixed $k \in \mathbb N$, then from
  \begin{align}
    & F(\theta)
      \leq C \int_0^\theta \big\|  F(ru^{ \frac{1}{\alpha - 1}  })\big\|_{\mathbf 1_{0<u<1}\frac{du}{u}; \alpha - 1} dr
      \leq C \int_0^\theta \Big\|  \frac{1}{k!}(Cru^{ \frac{1}{\alpha - 1}  })^k \Big\|_{\mathbf 1_{0<u<1}\frac{du}{u}; \alpha - 1} dr
    \\ & \leq \frac{C^{k+1}}{k!} \Big(\int_0^\theta r^k dr\Big) \cdot \|u^{\frac{k}{\alpha - 1}} \|_{\mathbf 1_{0<u<1}\frac{du}{u}; \alpha - 1} \leq \frac{C^{k+1} \theta^{k+1}}{(k+1)!},
  \end{align}
  we have that \eqref{eq:upperbound_for_F} is true for $k+1$.
  Therefore, by induction, \eqref{eq:upperbound_for_F} is true for all $k \in \mathbb N$.
  
  Letting $k \to \infty$ in \eqref{eq:upperbound_for_F}, we get that $F(\theta) = 0$ for each $\theta \geq 0$.
\end{proof}

\begin{lem} 
  \label{lem: characterize the general Mittag-Leffler distribution}
  Suppose that $\alpha \in (1,2).$
  The non-linear delay equation
  \begin{align} 
    \label{eq: equation for the distribution}
    G( \theta)
    = \int_0^\theta \exp\Big\{ - \frac{\alpha} {\alpha - 1} \int_0^1 G(ru^{\frac{1}{\alpha - 1} })^{\alpha - 1}\frac{du}{u} \Big\} dr,
    \quad \theta \geq 0,
  \end{align}
	has a unique solution:
  \begin{align}
    \label{eq: solution for the equation}
    G(\theta)
    = \Big(\frac{1}{1+\theta^{-(\alpha - 1)}}\Big)^{\frac{1}{\alpha - 1}},
    \quad \theta \geq 0.
  \end{align}
\end{lem}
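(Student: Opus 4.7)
The plan is to convert the integral equation into a first-order separable ODE by a change of variables in the inner integral followed by differentiation. First I would substitute $s = ru^{1/(\alpha-1)}$ (for fixed $r$) in the inner integral; under this substitution $du/u = (\alpha-1)\,ds/s$ and $u\in(0,1)$ corresponds to $s\in(0,r)$, so $\frac{\alpha}{\alpha-1}\int_0^1 G(ru^{1/(\alpha-1)})^{\alpha-1}\frac{du}{u} = \alpha\int_0^r G(s)^{\alpha-1}\,\frac{ds}{s}$. Thus \eqref{eq: equation for the distribution} is equivalent to
\begin{align}
  \label{eq:eqstar}
  G(\theta) = \int_0^\theta \exp\Bigl\{-\alpha \int_0^r G(s)^{\alpha-1}\,\frac{ds}{s}\Bigr\}\,dr.
\end{align}
Since the exponential factor is at most $1$, every solution satisfies $G(\theta)\le\theta$, which in turn gives $G(s)^{\alpha-1}/s \le s^{\alpha-2}$ near $s=0$—integrable because $\alpha>1$—so the right-hand side is well defined.

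Differentiating \eqref{eq:eqstar} shows $G\in C^1$, $G'>0$, and $G'(0^+)=1$. Taking logarithms and differentiating once more produces $G''/G' = -\alpha G^{\alpha-1}/\theta$, equivalently $\theta G'' = -\alpha G^{\alpha-1}G' = -(G^\alpha)'$. Since $\theta G'' = (\theta G'-G)'$, one integration yields $\theta G'(\theta) - G(\theta) + G(\theta)^\alpha = C$, and letting $\theta\to 0^+$ (using $G(0)=0$ and $G'(0^+)=1$) forces $C=0$. Hence every solution obeys the separable ODE
\begin{align}
  \label{eq:sepode}
  \theta G'(\theta) = G(\theta)\bigl(1-G(\theta)^{\alpha-1}\bigr).
\end{align}
A short argument gives $0<G(\theta)<1$ on $(0,\infty)$: positivity is immediate from \eqref{eq:eqstar}, while $G(\theta_0)\ge 1$ would force $G'(\theta_0)\le 0$ by \eqref{eq:sepode}, contradicting $G'=e^{-\cdots}>0$.

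Finally I would solve \eqref{eq:sepode} via the substitution $h:=G^{\alpha-1}\in(0,1)$, which turns it into the logistic equation $\theta h' = (\alpha-1)h(1-h)$. Separation of variables gives $\log\frac{h}{1-h} = (\alpha-1)\log\theta + C$; pinning $C$ with the boundary behaviour $h(\theta)\sim\theta^{\alpha-1}$ (a consequence of $G(\theta)\sim\theta$ near $0$) yields $h/(1-h) = \theta^{\alpha-1}$, hence $G^{\alpha-1} = 1/(1+\theta^{-(\alpha-1)})$, which is \eqref{eq: solution for the equation}. Because every step is forced on any solution of \eqref{eq: equation for the distribution}, existence and uniqueness follow simultaneously. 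The most delicate point is the boundary analysis at $\theta=0$—in particular justifying $G\in C^2((0,\infty))$, $G'(0^+)=1$, and convergence of $\int_0^r G(s)^{\alpha-1}\,ds/s$—all of which ultimately flow from the a priori estimate $G(\theta)\le\theta$ read off from \eqref{eq:eqstar}.
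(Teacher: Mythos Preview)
Your proof is correct and takes a genuinely different route from the paper's. The paper proceeds in two separate steps: it first verifies by direct computation that the explicit formula \eqref{eq: solution for the equation} satisfies \eqref{eq: equation for the distribution}, and then proves uniqueness by setting $F(\theta)=|G(\theta)^{\alpha-1}-G_0(\theta)^{\alpha-1}|^{1/(\alpha-1)}$ for a putative second solution $G_0$, showing via Minkowski's inequality that $F$ satisfies a Gronwall-type integral inequality, and invoking the auxiliary Lemma~\ref{lem: F is zero} to conclude $F\equiv 0$.

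Your approach is more constructive: the change of variables $s=ru^{1/(\alpha-1)}$ reduces the inner integral to $\alpha\int_0^r G(s)^{\alpha-1}\,ds/s$, and two differentiations together with the a~priori bound $G(\theta)\le\theta$ yield the separable ODE $\theta G'=G(1-G^{\alpha-1})$ with boundary behaviour $G'(0^+)=1$, which can be solved explicitly. Since all steps are reversible (as you implicitly note), existence and uniqueness come out simultaneously, and you never need to guess the answer in advance or appeal to a separate Gronwall lemma. The paper's approach, on the other hand, reuses Lemma~\ref{lem: F is zero}, which is needed anyway in Step~7 of the proof of Theorem~\ref{thm: main theorem}(3), so in context it costs nothing extra. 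Your argument has the advantage of explaining \emph{where} the formula \eqref{eq: solution for the equation} comes from rather than merely verifying it; the paper's has the advantage of isolating a reusable inequality.
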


\begin{proof}
	We first verify that \eqref{eq: solution for the equation}  is a solution of \eqref{eq: equation for the distribution}.
	In fact, if $G(\theta) = (\frac{1}{1+ \theta^{-(\alpha - 1)}})^{\frac{1}{\alpha - 1}}$, then
  \begin{align}
    & \int_0^\theta \exp\Big\{- \frac{\alpha} {\alpha - 1} \int_0^1 G(ru^{\frac{1}{\alpha - 1}})^{\alpha - 1}\frac{du}{u}\Big\} dr
    \\ & \quad = \int_0^\theta \exp\Big\{- \frac{\alpha} {\alpha - 1} \int_0^1 \frac{du}{u+r^{-(\alpha - 1)}} \Big\} dr
         = \int_0^\theta \exp\Big\{- \frac{\alpha} {\alpha - 1} \log \frac {1+r^{-(\alpha - 1)}} {r^{-(\alpha - 1)} } \Big\} dr
    \\ & \quad = \int_0^\theta \big(\frac{1+r^{-(\alpha - 1)}}{r^{-(\alpha - 1)} }\big)^{- \frac{\alpha} {\alpha - 1}} dr
         = \int_0^\theta \big( 1 + r^{ - ( \alpha - 1 ) } \big)^{- \frac{\alpha} {\alpha - 1}} r^{-\alpha} dr
         = G(\theta).
  \end{align}
	The last equality is due to $G(0) = 0$ and
  \begin{align}
    \frac{d}{d\theta}G(\theta)
    & = - \frac{1}{\alpha - 1} \big(1+\theta^{-(\alpha - 1)}\big)^{- \frac{1}{\alpha - 1} - 1} \frac{d}{d\theta} \theta^{-(\alpha - 1)}
    \\ & =  \big(1+\theta^{-(\alpha - 1)}\big)^{- \frac{\alpha}{\alpha - 1} } \theta^{-\alpha}.
  \end{align}
	
  Now assume that $G_0$ is another solution to the equation \eqref{eq: equation for the distribution}, we then only have to show that $G_0 = G$.
  This can be done by showing that $F(\theta) = 0$ where
  \begin{align}
    F(\theta)
    := |G(\theta)^{\alpha - 1} - G_0(\theta)^{\alpha - 1}|^{\frac{1}{\alpha - 1}},
    \quad \theta \geq 0.
  \end{align}
  We claim that the non-negative function $F$ satisfies the inequality \eqref{eq:Gronwall_inequlity} with $C = \alpha ^{1/(\alpha - 1)}$.
	In fact, by the $L^p$ Minkowski inequality with $p = \frac{1}{\alpha - 1} > 1$, we have
  \begin{align}
    & |G(\theta)^{\alpha - 1} - G_0(\theta)^{\alpha - 1}|
    \\ & \quad = \Big| \|e^{-\alpha \int_0^1 G(ru^{\frac{1}{\alpha - 1}})^{\alpha - 1} \frac{du}{u}} \|_{\mathbf 1_{0<r<\theta} dr; \frac{1}{\alpha - 1}} - \|e^{-\alpha \int_0^1 G_0(ru^{ \frac{1} {\alpha - 1}})^{\alpha - 1} \frac{du}{u}} \|_{\mathbf 1_{0<r<\theta}dr;\frac{1}{\alpha - 1}} \Big|
    \\ & \quad \leq \| e^{-\alpha \int_0^1 G(ru^{\frac{1}{\alpha - 1}})^{\alpha - 1} \frac{du}{u}} - e^{-\alpha \int_0^1 G_0(ru^{\frac{1}{\alpha - 1}})^{\alpha - 1} \frac{du}{u}} \|_{\mathbf 1_{0<r<\theta}dr;\frac{1} {\alpha - 1}}
    \\ & \quad \leq \Big\| \alpha \int_0^1 G(ru^{\frac{1}{\alpha - 1}})^{\alpha - 1} \frac{du}{u} - \alpha \int_0^1 G_0(ru^{\frac{1}{\alpha - 1}})^{\alpha - 1} \frac{du}{u} \Big\|_{\mathbf 1_{0<r<\theta}dr;\frac{1} {\alpha - 1}}
    \\ & \quad \leq \alpha \Bigg( \int_0^\theta \Big( \int_0^1 |G(ru^{\frac{1}{\alpha - 1}})^{\alpha - 1} - G_0(ru^{\frac{1}{\alpha - 1}})^{\alpha - 1}| \frac{du}{u} \Big)^{\frac{1}{\alpha - 1}} dr \Bigg)^{\alpha - 1}.
  \end{align}
  This implies the claim.
  
  On the other hand, according to \eqref{eq: equation for the distribution}, we have that $G(\theta) \leq \theta$ and $G_0(\theta) \leq \theta$.
  Therefore, we also have that there is a constant $C_1 > 0$ such that $F(\theta) \leq C_1 \theta$.
  Therefore, according to Lemma \ref{lem: F is zero}, we have $F \equiv 0$ as desired.
\end{proof}

\subsection*{Acknowledgment:}
We thank the referees for helpful comments on the first version of this paper.


\end{document}